\def\R{\mathbb{R}}
\def\T{\mathbb{T}}
\def\N{\mathbb{N}}
\def\Z{\mathbb{Z}}
\DeclareMathOperator{\sgn}{sign}
\newtheorem{theorem}{Theorem}
\newtheorem{proposition}[theorem]{Proposition}
\newtheorem{lemma}[theorem]{Lemma}
\newtheorem{example}[theorem]{Example}
\begin{document}

\title[Hermite polynomials, linear flows and uncertainty principles]{Hermite polynomials, linear flows on the torus,\\ and an uncertainty principle for roots}

\author{Felipe Gon\c{c}alves}
\address{
        Felipe Gon\c{c}alves\\
        IMPA, Estrada Dona Castorina 110\\
        Rio de Janeiro, RJ  22460-320, Brazil}
\email{ffgoncalves@impa.br}

\author{Diogo Oliveira e Silva}
\address{
        Diogo Oliveira e Silva\\
        Hausdorff Center for Mathematics\\
        53115 Bonn, Germany}
\email{dosilva@math.uni-bonn.de}

\author{Stefan Steinerberger}
\address{
        Stefan Steinerberger\\
Department of Mathematics\\
        Yale University\\
        New Haven, CT 06511, USA}
\email{stefan.steinerberger@yale.edu}

\begin{abstract} 
We study a recent result of Bourgain, Clozel and Kahane, a version of which states that a sufficiently nice function $f:\mathbb{R} \rightarrow \mathbb{R}$ that coincides with its Fourier transform and vanishes at the origin has a root in the interval $(c, \infty)$, 
where the optimal $c$ satisfies $0.41 \leq c \leq 0.64$. A similar result holds in higher dimensions. 
We improve the one-dimensional result to $0.45 \leq c \leq 0.594$, and the lower bound in higher dimensions.
We also prove that extremizers exist, and have infinitely many double roots. With this purpose in mind, we establish a new structure statement about Hermite polynomials which relates their pointwise evaluation to linear flows on the torus, and applies to other families of orthogonal polynomials as well.
\end{abstract}

\subjclass[2010]{33C45, 42B10}
\keywords{Uncertainty principle, Fourier transform, Hermite polynomials.}

\maketitle

\vspace{-20pt}

\section{Introduction and main results}
Throughout the paper, we will use the normalization that turns the Fourier transform into a unitary operator on $L^2(\mathbb{R}^d)$:

\begin{equation}\label{FT}
\widehat{f}(y)=\int_{\R^d} f(x) e^{-2\pi i x\cdot y} dx
\end{equation}

\subsection{Setup}
The following  insight is due to Bourgain, Clozel and Kahane \cite{BCK}:
If $f:\mathbb{R} \rightarrow \mathbb{R}$ is an even function such that  $f(0) \leq 0$ and $\widehat{f}(0) \leq 0$, then it is not possible for both $f$ and
$\widehat{f}$ to be positive outside an arbitrarily small neighborhood of the origin. 
Having $f$ even and real-valued guarantees that $\widehat{f}$ is real-valued and even. {The second condition yields
$$ 0 \geq \widehat{f}(0)  = \int_{-\infty}^\infty{f(x)dx} \qquad \mbox{and} \qquad 0 \geq f(0) = \int_{-\infty}^\infty{\widehat{f}(y)dy},$$
which implies that the quantities
$$A(f):=\inf~\{r>0: f(x)\geq 0 \textrm{ if } |x|>r\}$$
$$A(\widehat{f}):=\inf~\{r>0: \widehat{f}(y)\geq 0 \textrm{ if } |y|>r\}$$
are strictly positive (possibly $\infty$) unless $f \equiv 0$. There is a dilation
symmetry $x \rightarrow \lambda x$ having the reciprocal effect $y \rightarrow y/\lambda$ on the Fourier side. As a consequence, the product $A(f)A(\widehat{f})$ is
invariant under this group action and becomes a natural quantity to consider.

\subsection{One-dimensional bounds} The paper \cite{BCK} establishes the following quantitative result.

\begin{theorem}[Bourgain, Clozel \& Kahane]\label{BCK1}
Let $f:\R\to\R$ be a nonzero, integrable, even function such that $f(0) \leq 0$, $\widehat{f} \in L^1(\mathbb{R})$ and $\widehat{f}(0) \leq 0$. Then
$$A(f)A(\widehat{f}) \geq 0.1687,$$
and $0.1687$ cannot be replaced by $0.41$.
\end{theorem}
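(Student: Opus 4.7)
The dilation $f(x)\mapsto\sqrt{\lambda}\,f(\lambda x)$ preserves all hypotheses, satisfies $A(f)\mapsto A(f)/\lambda$ and $A(\widehat{f})\mapsto\lambda A(\widehat{f})$, and leaves the product $A(f)A(\widehat{f})$ invariant; one may thus normalize $A(f)=A(\widehat{f})=:A$ and reduce the lower bound to the scalar statement $A\geq\sqrt{0.1687}$. Writing $f=f_+-f_-$ and $\widehat{f}=\widehat{f}_+-\widehat{f}_-$ for the positive/negative parts, the defining property of $A$ forces $\operatorname{supp}f_-,\operatorname{supp}\widehat{f}_-\subset[-A,A]$. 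The hypothesis $\widehat{f}(0)=\int_\R f\leq 0$ is equivalent to $\int_\R f_+\leq\int_\R f_-$, and, dually via Fourier inversion on the real even function $f$, the hypothesis $f(0)\leq 0$ gives $\int_\R \widehat{f}_+\leq\int_\R \widehat{f}_-$.

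\textbf{Plan for the lower bound.} Pair $f$ against an even Schwartz test function $g$ satisfying $g\geq 0$ and $\widehat{g}\geq 0$ (for instance a dilated Gaussian, or a sum of Gaussians of different widths) and apply Plancherel, $\int fg=\int\widehat{f}\widehat{g}$. Splitting each side according to the support of $f_-$ and of $\widehat{f}_-$, the tail contributions $\int_{|x|>A}fg$ and $\int_{|y|>A}\widehat{f}\widehat{g}$ are nonnegative, while the central contributions are controlled by the pointwise extrema of $g$ and $\widehat{g}$ on $[-A,A]$ times the $L^1$-masses of $f_\pm$ and of $\widehat{f}_\pm$. Combining these bounds with the two mass inequalities above gives a scalar inequality involving $A$ and the extrema of $g, \widehat{g}$ on $[-A,A]$ only. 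Performing the same argument with a second test function (or exchanging $f\leftrightarrow\widehat{f}$) closes the system, and optimizing $g$ over a parametric family — Gaussian dilates, or linear combinations of a small number of Hermite functions — produces the numerical constant $\sqrt{0.1687}$.

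\textbf{Upper bound and main obstacle.} To show that $0.1687$ cannot be replaced by $0.41$, I would construct an explicit $f$ in the $+1$-eigenspace of $\widehat{\,\cdot\,}$ (spanned by the Hermite functions $h_{4k}$), so that $f=\widehat{f}$ and $A(f)A(\widehat{f})=A(f)^2$. A short linear combination $f=\sum_{k=0}^N c_k h_{4k}$ with $f(0)\leq 0$ whose last sign change occurs strictly before $\sqrt{0.41}\approx 0.64$ can be produced by a finite-dimensional numerical search over the coefficients $c_k$; with the zeros of $h_4$ already lying near $0.66$, one or two extra terms suffice. The main obstacle is clearly the lower bound: the value $0.1687$ is not the output of a clean closed-form identity but of an infinite-dimensional optimization over admissible test functions, and its sharpness is entirely driven by the cleverness of that choice. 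This is precisely the leverage point exploited later in the paper: every subsequent improvement in the literature on the lower bound corresponds to enlarging — or reorganizing — the class of admissible $g$.
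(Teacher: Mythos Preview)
Your reduction and your upper-bound plan are both fine: the dilation normalization is exactly what the paper does in \S\ref{sec:Reductions}, and the explicit Hermite construction is precisely the mechanism of \S\ref{sec:1DUB}.

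The lower-bound plan, however, has a real gap. Pairing $f$ against a test function $g$ with $g,\widehat g\ge 0$ via Plancherel does not by itself yield a constraint on $A$: the identity $\int fg=\int\widehat f\,\widehat g$ is symmetric, and the ``central versus tail'' splitting gives inequalities on each side separately without forcing them to conflict. You assert that ``combining these bounds \dots\ gives a scalar inequality involving $A$'', but no such combination is exhibited, and I do not see one. Worse, after the further reduction $f=\widehat f$ (which the paper also performs in \S\ref{sec:Reductions} and which you would need anyway to make the two sides interact), the Plancherel identity collapses to a tautology for every $g$, so this cannot be the engine. The number $0.1687$ has a very specific provenance that your outline never touches.

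The actual Bourgain--Clozel--Kahane argument (which the paper reproduces for $d\ge 2$ in \S\ref{sec:HigherDim}; specialize to $d=1$) is different in kind. After reducing to $f=\widehat f$, $f(0)=0$, $\|f\|_{L^1}=1$, Fourier inversion together with $\int f=0$ gives
\[
f(x)=\int_{\R} f(y)\bigl(\cos(2\pi xy)-1\bigr)\,dy,
\]
and since $1-\cos\ge 0$ one obtains the \emph{pointwise} inequality
\[
f^-(x)\ \le\ \int_{\R} f^+(y)\,\bigl(1-\cos(2\pi xy)\bigr)\,dy.
\]
Integrating over $[-A,A]$ and using $\int f^-=\int f^+=\tfrac12$ yields
\[
\tfrac12\ \le\ \int_{\R} f^+(y)\Bigl(2A-\frac{\sin(2\pi Ay)}{\pi y}\Bigr)dy\ \le\ \tfrac12\cdot 2A\,\sup_{t>0}\Bigl(1-\frac{\sin t}{t}\Bigr)=A\,(1+\lambda_1),
\]
with $\lambda_1=-\min_{t>0}\tfrac{\sin t}{t}\approx 0.2172$. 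Hence $A\ge \tfrac{1}{2(1+\lambda_1)}\approx 0.4108$ and $A^2\ge 0.1687$. The crucial idea you are missing is this pointwise domination of $f^-$ by an integral of $f^+$ against the nonnegative kernel $1-\cos$; it is not a duality pairing with an external test function.
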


It is straightforward to prove \textit{some} lower bound for the quantity $A(f)A(\widehat{f})$, see Lemma \ref{trivialLB} below for a very short and easy proof taken from \cite{BCK} of the lower bound $1/16$.
The purpose of the present paper is to popularize the statement,
to give new proofs of improved estimates, and to investigate properties of extremizers. 
Our first argument improves the constants.

\begin{theorem}\label{1Dthm}
Let $f:\R\to\R$ be a nonzero, integrable, even function such that  $f(0) \leq 0$, $\widehat{f} \in L^1(\mathbb{R})$ and $\widehat{f}(0) \leq 0$. Then
$$A(f)A(\widehat{f})\geq 0.2025, $$
and $0.2025$ cannot be replaced by $0.353$.
\end{theorem}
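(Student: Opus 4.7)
The plan is to treat the lower bound and upper bound separately.

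For the \textbf{lower bound} $A(f)A(\widehat{f}) \geq 0.2025$, I would first exploit the dilation invariance: $f(x) \mapsto \sqrt{\lambda}\, f(\lambda x)$ sends $A(f) \mapsto A(f)/\lambda$ and $A(\widehat{f}) \mapsto \lambda A(\widehat{f})$, so their product is unchanged, and I may assume $A(f)=A(\widehat{f})=A$; the task reduces to showing $A \geq 0.45$. Decomposing $f = f_+ + f_-$ into Fourier $\pm 1$ eigenparts $f_\pm = (f \pm \widehat{f})/2$, I observe that $f_+$ is a $+1$ Fourier eigenfunction satisfying $f_+(0)\leq 0$ and $f_+(x)\geq 0$ for $|x|>A$, hence $A(f_+)\leq A$. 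If $f_+ \equiv 0$, then $\widehat{f}=-f$, and the conditions $f \geq 0$ and $\widehat{f}\geq 0$ outside $[-A,A]$ force $f$ to be compactly supported; then $\widehat{f}$ is entire by Paley--Wiener, hence $f$ is entire, and an entire compactly supported function vanishes identically, contradicting the nonzero hypothesis. Thus $f_+ \not\equiv 0$, and it suffices to prove the bound for nonzero $+1$ Fourier eigenfunctions.

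The core is then a test-function argument. I would look for an even Schwartz function $\phi$ in the $+1$ eigenspace with carefully prescribed sign data: roughly $\phi \leq 0$ on $[-A,A]$ and $\phi \geq 0$ outside (or a finer pattern), together with a normalization at the origin, so that the pairing $\int g \phi\, dx$ becomes sign-definite via the matching signs of $g$ and $\phi$ in each region, yielding a contradiction unless $A$ is sufficiently large. Natural candidates are finite Hermite expansions $\phi = \sum_{k=0}^N c_k h_{4k}$, which live in the $+1$ eigenspace automatically; the search for the optimal $\phi$ then reduces to a finite-dimensional linear programming problem whose numerical solution must be certified rigorously. Driving $N$ to a moderate size and optimizing should yield $A \geq 0.45$.

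For the \textbf{upper bound}, I would construct explicit admissible $f$ as finite linear combinations $f = \sum_{k=0}^N c_k h_{4k}$ in the $+1$ Fourier eigenspace (so that $\widehat{f}=f$ automatically), impose $f(0)\leq 0$, and numerically search over the coefficients for a configuration that pushes the outermost zero of $f$ below $\sqrt{0.353} \approx 0.594$. Since $\widehat{f}=f$, this directly yields $A(f)A(\widehat{f}) = A(f)^2 < 0.353$.

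The \textbf{main obstacle} is producing the improved test function $\phi$ rigorously for the lower bound. The jump from $0.41$ of Bourgain--Clozel--Kahane to $0.45$ requires sharp control over the sign of $\phi$ throughout $\R$, and the oscillation of the Hermite polynomial factor makes sign verification at large $|x|$ delicate. This is precisely where the paper's structural theorem connecting Hermite polynomial evaluations to linear flows on the torus should enter, providing the analytic tool to certify sign conditions beyond any finite numerical range.
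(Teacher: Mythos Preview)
Your upper bound plan matches the paper exactly: it exhibits an explicit finite Hermite combination $f=\sum_{k=0}^{3}\alpha_k H_{4k}(\sqrt{2\pi}x)e^{-\pi x^2}$ with $f(0)=0$ whose largest root sits near $0.5935$, giving $A(f)^2<0.353$.

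Your lower bound plan, however, diverges from the paper both in method and in where you locate the key tool. The paper does \emph{not} run a linear-programming/test-function argument. After the same reductions (dilation, $g=f+\widehat f$, $f(0)=0$, $\|f\|_{L^1}=1$, hence $\|f\|_{L^\infty}\le 1$), the argument is of rearrangement/optimal-transport flavor:
\begin{itemize}
\item From $f(x)=\int f(y)\cos(2\pi xy)\,dy$ and $\|f^-\|_{L^1}=1/2$, a pointwise upper bound for $f$ on $[0,A]$ is obtained by pushing all the mass of $f^-$ to where $\cos$ is smallest (Lemma~\ref{Rearr1}). This yields a concrete bound on $\tau:=\int_0^A f^+$ when $A<0.45$.
\item An integral identity $\int_0^A f=\int f\,\Upsilon_A$ is derived, where $\Upsilon_A(y)=\frac{\sin(2\pi Ay)}{2\pi y}+\frac{13}{400}(8\pi y^2-2)e^{-\pi y^2}$; the second term is a multiple of $\psi_2$, orthogonal to every $\psi_{4n}$ and hence to $f$.
\item One then bounds $\int f^+\Upsilon_A$ and $\int f^-\Upsilon_A$ via the bathtub-type Lemma~\ref{Rearr2}, and checks (with a Lipschitz argument in $\tau$) that the resulting inequality is violated for all $A<0.45$.
\end{itemize}
So the mechanism is $L^\infty$ control plus mass rearrangement against explicit kernels, not a sign-pattern test function.

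You have also misplaced the Hermite/linear-flow theorem. It is \emph{not} used in the proof of the lower bound $A\ge 0.45$ at all; it enters only in the proof of Theorem~\ref{ext} (extremizers have infinitely many double roots), where one needs a perturbation $\varphi_n$ that is simultaneously positive at finitely many prescribed points. Your proposed use of it---to certify signs of a test function $\phi$ at infinity---is not how the paper employs it, and in any case a finite Hermite sum $\phi$ has a polynomial-times-Gaussian form whose far-field sign is determined by the leading coefficient, so no torus argument would be needed there.

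Finally, while an LP/test-function approach is a legitimate alternative strategy for problems of this type, your sketch does not make the pairing argument precise (what exactly forces $\int f\phi$ to have a definite sign, given that $f$ has no sign control on $[-A,A]$?), and you have not indicated why it would land at $0.45$ specifically. As written this is a plan for a different proof, with its central step still open.
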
  

The proof of the lower bound in Theorem \ref{1Dthm} 
relies on rearrangement inequalities of optimal transport flavor which do not admit a straightforward generalization to higher dimensions.
It is quite involved and cannot be improved much further: the third decimal place in the lower bound could be increased at the expense of some additional work, but a genuinely new idea seems needed for substantial further improvement. In contrast, we believe that the upper bound given by Theorem \ref{1Dthm} 
 might be very close to being optimal and that functions which almost realize the sharp constant look like the function depicted in Figure \ref{fig:goodcandidate}. 
 \begin{figure}[h!]
\includegraphics[width = 0.55\textwidth]{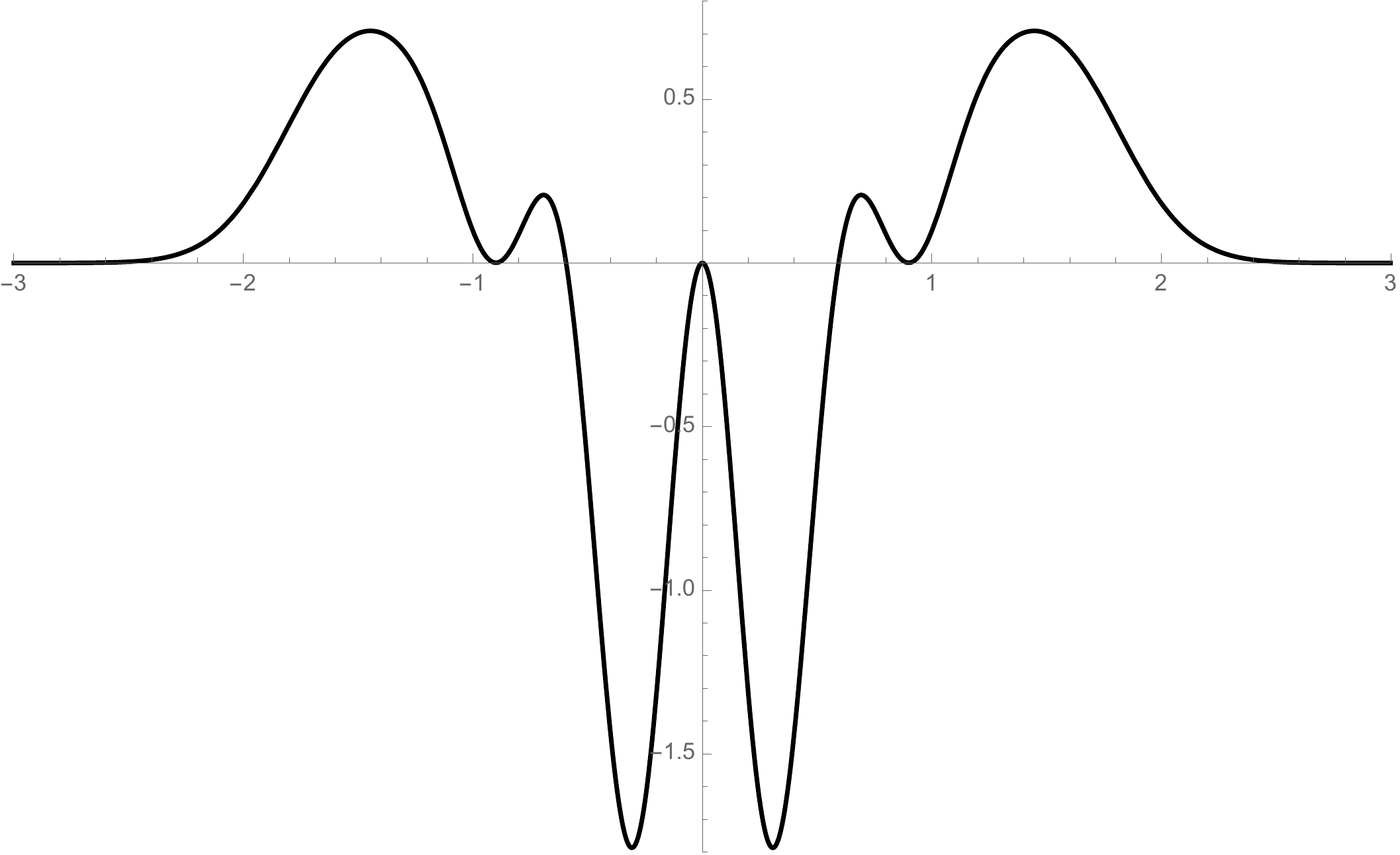}
\caption{Plot of a function $f\in L^1(\R)$ satisfying  $\widehat{f} = {f}$ and $f(0) = 0$ which  is non-negative in the interval $(0.6, \infty)$.}
\label{fig:goodcandidate}
\end{figure}

\subsection{Extremizers}
Let $\mathcal{A}$ denote the higher-dimensional version of the set of functions considered in Theorems \ref{BCK1} and \ref{1Dthm}. In other words, let $d\geq 1$, and say that a function $f:\R^d\to\R$ belongs to $\mathcal{A}$ if it is nonzero, integrable with integrable Fourier transform, and such that $f(0)\leq 0$ and $\widehat{f}(0)\leq 0$. Set
$${\bf A}:=\inf_{f\in\mathcal{A}} \sqrt{A(f)A(\widehat{f})},$$
where $A(f)$ again denotes the smallest positive real number $r$ such that $f(x)\geq 0$, for every $|x|>r$. 
Our next result  shows that the inequality 
\begin{equation}\label{sharp}
A(f)A(\widehat{f})\geq {\bf A}^2\;\;\; (f\in\mathcal{A})
\end{equation}
admits an extremizer. It holds in every dimension $d\geq 1$.

\begin{theorem}\label{existence-thm}
There exists a nonzero radial function $f\in L^1(\R^d)$ such that $\widehat f=f$, $f(0)=\widehat f(0)=0$, and $A(f)={\bf A}$.
\end{theorem}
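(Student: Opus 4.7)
The plan is to apply the direct method of the calculus of variations to a suitable minimizing sequence, modulo the symmetries of the problem. Let $\{f_n\}\subset\mathcal{A}$ satisfy $\sqrt{A(f_n)A(\widehat{f_n})}\to\mathbf{A}$. I would perform a sequence of symmetry reductions. By $O(d)$-invariance of $\mathcal{A}$ and of the functional, replacing $f_n$ by its radial (Haar) average over $O(d)$ preserves the class and does not increase $A(f_n)A(\widehat{f_n})$. The dilation $f\mapsto f(\lambda\cdot)$ allows normalizing $A(f_n)=A(\widehat{f_n})$, after which the substitution $f_n\mapsto\frac{1}{2}(f_n+\widehat{f_n})$ yields a Fourier-self-dual function in $\mathcal{A}$ whose $A$-value is bounded by $\max\{A(f_n),A(\widehat{f_n})\}$. (The degenerate case $f_n\equiv-\widehat{f_n}$ is handled by a small perturbation, or by working with $f_n-\widehat{f_n}$ suitably.) Finally, adding a nonnegative multiple of the strictly positive Fourier-self-dual Gaussian $\varphi(x):=e^{-\pi|x|^2}$ (which has $A(\varphi)=0$) enforces $f_n(0)=\widehat{f_n}(0)=0$ without increasing $A(f_n)$. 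After a final rescaling, each $f_n$ is radial, Fourier-self-dual, vanishes at the origin, and satisfies $A(f_n)=\mathbf{A}$.

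Next I would extract a convergent subsequence. Normalize $\|f_n^-\|_1=1$; since $\int f_n=\widehat{f_n}(0)=0$, this forces $\|f_n^+\|_1=1$, hence $\|f_n\|_1=2$. Self-duality gives $\|f_n\|_\infty\leq\|f_n\|_1=2$, and therefore $\|f_n\|_2\leq 2$. Weak $L^2$-compactness delivers a subsequence with $f_n\rightharpoonup f$ in $L^2(\R^d)$, and continuity of the Fourier transform in the weak topology yields $\widehat{f}=f$ (and $f$ radial). Testing against nonnegative test functions supported in $\{|x|>\mathbf{A}\}$ preserves the pointwise inequality, so $f\geq 0$ a.e.\ outside $B_\mathbf{A}$.

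The main obstacle is to upgrade this weak information so as to show $f\neq 0$, $f\in L^1$, and $A(f)=\mathbf{A}$. The key structural fact is that $f_n^-$ has unit $L^1$ norm and is supported in the fixed compact ball $B_\mathbf{A}$. Combined with $\|f_n\|_\infty\leq 2$, an Arzel\`a--Ascoli compactness argument on $B_\mathbf{A}$ would produce a uniformly convergent subsequence, forcing $f^-\not\equiv 0$ and $f(0)=0$. Equicontinuity would follow from the self-duality bound $|f_n(x)-f_n(y)|\leq 2\pi R|x-y|\,\|f_n\|_1+2\int_{|\xi|>R}|f_n(\xi)|\,d\xi$ once uniform $L^1$-tightness of $\{f_n\}$ is established; the latter is the technical crux, and would be derived from the positivity identity $\int_{|x|>R}|f_n|=\int_{|x|>R}f_n$ (valid for $R\geq\mathbf{A}$) together with $f_n=\widehat{f_n}$, which prevents mass from escaping to infinity on either the physical or the Fourier side. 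Once compactness is secured, tightness upgrades weak $L^2$-convergence to convergence in $L^1$, giving $f\in L^1$ with $\int f=0$. The inequality $A(f)\leq\mathbf{A}$ then follows from the pointwise positivity outside $B_\mathbf{A}$, and the reverse $A(f)\geq\mathbf{A}$ from the definition of $\mathbf{A}$, making $f$ an extremizer.
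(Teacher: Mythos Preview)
Your overall strategy---normalize via the symmetries of \S\ref{sec:Reductions}, extract a weak $L^2$ limit, and verify the limit lies in $\mathcal{A}$---is the same as the paper's. (One minor slip: after the reductions you obtain $A(f_n)\to\mathbf{A}$, not $A(f_n)=\mathbf{A}$; this is harmless since one works inside the fixed ball $B_{A(f_1)}$.)

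The genuine gap is precisely the step you flag as ``the technical crux'': uniform $L^1$-tightness. Your justification is only a heuristic. The identity $\int_{|x|>R}|f_n|=\int_{|x|>R}f_n$ (for $R\geq A(f_n)$) combined with $\int f_n=0$ yields $\int_{|x|>R}|f_n|=-\int_{|x|\leq R}f_n$, which is \emph{bounded} but carries no mechanism forcing it to be uniformly small as $R\to\infty$. Self-duality by itself does not visibly prevent a fixed fraction of the positive mass from drifting off to infinity: if $f_n$ has mass near $|x|\sim R_n\to\infty$, then so does $\widehat{f_n}=f_n$, which is consistent rather than contradictory. Without tightness you have no equicontinuity, no Arzel\`a--Ascoli, and no uniform convergence on the ball, so the conclusions $f^-\not\equiv 0$ and $f(0)=0$ are unproved. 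Even the weaker claim that $f\not\equiv 0$ is not secured: the weak limits of $f_n^+$ and $f_n^-$ need not be disjointly supported and could cancel.

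The paper avoids tightness entirely and supplies the missing quantitative input via the Nazarov--Jaming uncertainty principle (Theorem~\ref{NazUP}). This yields a uniform bound $\int_{B_r}f_n\leq -K(d,r)<0$ (Lemma~\ref{uniform-est}), which survives weak $L^2$-convergence against $\chi_{B_r}$ and forces $f\not\equiv 0$ directly. To show $f\in L^1$ and $\widehat f(0)\leq 0$, the paper does not attempt $L^1$-convergence either: it uses Mazur's lemma to upgrade to strong $L^2$-convergence of convex combinations $g_n$, passes to an a.e.\ convergent subsequence, and applies Fatou's lemma on the region where the $g_n$ are nonnegative (respectively, to the shifted nonnegative functions $g_n+\chi_{B}$). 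If you want to salvage your route, you will need an input of comparable strength to Nazarov--Jaming at exactly this point; the positivity identity alone does not carry the argument.
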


We proceed to show that extremizers for inequality \eqref{sharp} exhibit an unexpected behavior when compared to extremizers for other uncertainty principles (recall, for instance, that Gaussians extremize the Heisenberg uncertainty inequality). To state it precisely, let us say that a continuous function $f:\R\to\R$ has a {\it double root} at $x_0\in\R$ if $f(x_0)=0$ and $f$ does not change sign in a neighborhood of $x_0$.

\begin{theorem}\label{ext}  
 Let $f:\R_{+}\to \R$ be a function such that its radial extension, $x\in\R^d \mapsto f(|x|)$, belongs to the set $\mathcal{A}$ and realizes equality in \eqref{sharp}. Then 
$f$ has infinitely many double roots in the interval $(A(f), \infty)$.
\end{theorem}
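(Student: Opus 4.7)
We argue by contradiction, assuming $f$ has only finitely many double roots $r_1 < r_2 < \cdots < r_N$ in $(\mathbf{A}, \infty)$ with $\mathbf{A} := A(f)$.

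\emph{Reduction to the self-dual case.} By dilation we normalize $A(f) = A(\widehat f) = \mathbf{A}$, and consider $h := (f + \widehat f)/2$. Then $h$ is self-dual, belongs to $\mathcal{A}$, and satisfies $h \geq 0$ on $\{|x| > \mathbf{A}\}$, so $A(h) \leq \mathbf{A}$; the extremal inequality gives $A(h) \geq \mathbf{A}$, hence $A(h) = \mathbf{A}$. Any zero of $h$ in $(\mathbf{A}, \infty)$ forces both $f$ and $\widehat f$ to vanish there (as both are non-negative), so it is in particular a zero of $f$; and any such zero of $f$ is automatically a double root. Therefore it suffices to prove the claim for $h$, and we henceforth assume $\widehat f = f$. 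Then $f$ is continuous, $f(\mathbf{A}) = 0$, and $f > 0$ on $(\mathbf{A}, \infty) \setminus \{r_1, \ldots, r_N\}$.

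\emph{Constructing an admissible perturbation.} Let $\mathcal{V}$ be the infinite-dimensional space of rapidly decreasing radial self-dual functions on $\R^d$ vanishing at the origin, concretely the finite linear combinations of the radial Laguerre-Gauss eigenfunctions of the Fourier transform with eigenvalue $+1$, subject to $g(0) = 0$. We seek $g \in \mathcal{V}$ with $g(\mathbf{A}) > 0$ and $g(r_i) > 0$ for each $i$. Consider the linear evaluation map $T : \mathcal{V} \to \R^{N+1}$, $T(g) := (g(\mathbf{A}), g(r_1), \ldots, g(r_N))$. If $T$ is surjective, solve $T(g) = (1, \ldots, 1)$. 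Otherwise, there exist constants $(c_0, \ldots, c_N) \neq 0$ with $c_0 g(\mathbf{A}) + \sum_{i=1}^N c_i g(r_i) = 0$ for every $g \in \mathcal{V}$; expanding in a Laguerre-Gauss basis $\{\Phi_k\}$, this rewrites (with $r_0 := \mathbf{A}$) as $\sum_{i=0}^N c_i \Phi_k(r_i) = \lambda\, \Phi_k(0)$ for every $k \geq 0$, where $\lambda$ is a fixed scalar absorbing the $g(0) = 0$ constraint. This is precisely the kind of simultaneous identity that the paper's new structural statement on Hermite polynomials and linear flows on the torus is designed to rule out; it forces $c_0 = \cdots = c_N = 0$, so $T$ is surjective. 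By further prescribing the sign of the leading Laguerre-Gauss coefficient of $g$ (an open condition consistent with surjectivity of $T$), we may additionally arrange that $g \geq 0$ at infinity.

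\emph{Contradicting extremality.} With such $g$ in hand, set $f_\epsilon := f + \epsilon g$; this is self-dual with $f_\epsilon(0) = 0$, and hence lies in $\mathcal{A}$. For sufficiently small $\epsilon > 0$ we verify that $f_\epsilon \geq 0$ on $\{|x| \geq \mathbf{A} - \delta\}$ for some $\delta > 0$: near each point in $\{\mathbf{A}, r_1, \ldots, r_N\}$, the strict positivity of $g$ dominates the vanishing of $f$; on the remaining compact subset of $[\mathbf{A}, \infty)$, the function $f$ is uniformly bounded below by a positive constant; and at infinity, $g \geq 0$ gives $f_\epsilon \geq f > 0$. Therefore $A(f_\epsilon) < \mathbf{A}$, contradicting the extremality of $f$. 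The main obstacle is the appeal to the Hermite structure theorem to secure the surjectivity of $T$, i.e., to preclude any non-trivial linear dependence among the evaluations at $\mathbf{A}, r_1, \ldots, r_N$ on $\mathcal{V}$; this is the technical heart of the argument.
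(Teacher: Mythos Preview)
Your overall strategy coincides with the paper's: reduce to the self-dual case, construct a self-dual perturbation $g$ with $g(0)=0$ that is strictly positive at $\mathbf{A}$ and at each double root, and positive at infinity, so that $f+\varepsilon g$ contradicts extremality. The reduction and the final compactness/continuity argument are essentially as in the paper.

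The gap is in the construction of $g$. You claim that the Hermite/torus-flow structural statement rules out a nontrivial identity of the form $\sum_i c_i \Phi_k(r_i)=\lambda\,\Phi_k(0)$ for all $k$, and hence that the evaluation map $T$ is surjective. But Theorem~\ref{Hermite} and Lemma~\ref{herm} say nothing about linear independence of point-evaluation functionals; they assert only that for any finite set of points there are infinitely many indices $n$ for which $\varphi_n$ (resp.\ $H_{4n}$) is simultaneously positive at all of them. That conclusion neither implies nor follows from the impossibility of your identity, so the appeal is unjustified. Moreover, surjectivity is not what you need: a single $g\in\mathcal{V}$ with $T(g)$ in the open positive orthant suffices, and for $d=1$ this is exactly what Lemma~\ref{herm} produces directly (take $g=\varphi_n$ for suitable $n$; note $\varphi_n(0)=0$ and $\varphi_n>0$ at infinity are already built in). For $d\geq 2$ the paper does not use the torus-flow argument at all; it gives an elementary explicit construction, via partial sums of the Laguerre generating function \eqref{laguerre-gen-func} plus a finite correction, of a self-dual $\varphi$ with $\varphi(0)=0$ that is positive on an entire interval $[\alpha,T]$ containing all the $r_i$ and positive at infinity. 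Thus your detour through surjectivity both misidentifies the content of the structural theorem and is unnecessary; the step you flag as ``the technical heart'' is not established.
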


We remark that, in principle, it is possible for an extremizer $f$ to vanish identically in an interval $[a,b]\subset [A(f),\infty)$ and to be strictly positive for large values of its argument, although we believe that not to be the case. 
We approach Theorem \ref{ext} in two different ways, both of which follow a common  general strategy: Assuming $f$ to be an extremizer for inequality \eqref{sharp} with a finite number of double roots only, we identify a perturbation $f_\varepsilon$ of $f$ for which
$ A(f_\varepsilon)A(\widehat{f_\varepsilon}) < A(f)A(\widehat{f}).$
The first argument works only if $d=1$, but has the advantage that it relies on an explicit construction of the perturbation $f_\varepsilon$ that seems generalizable to a number of related situations which we plan to address in future work.
This construction makes use of a variant of the following nice result about Hermite polynomials which holds at a greater level of generality, and may be true for a wide class of orthogonal functions.

\begin{theorem}\label{Hermite} 
Let $ \left\{a_1, a_2, \dots, a_k\right\} \subset \mathbb{R}$ be a finite set of reals. Then there exist infinitely many Hermite polynomials $H_{4n}$ satisfying 
$$ \min_{1 \leq j \leq k}{H_{4n}(a_j) } > 0,$$
and there exist infinitely many Hermite polynomials $H_{4n+2}$ satisfying 
$\max_{1 \leq j \leq k}{H_{4n+2}(a_j) } < 0.$
\end{theorem}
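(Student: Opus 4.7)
My plan is to derive the desired sign behavior from the Plancherel--Rotach asymptotic formula for Hermite polynomials at a fixed argument. For $x \in \R$ fixed and $N \to \infty$, this formula gives
$$H_N(x)\, e^{-x^2/2} \;=\; \kappa_N \left[\cos\!\left(\sqrt{2N+1}\,x - \tfrac{N\pi}{2}\right) + O(N^{-1/2})\right],$$
with a positive prefactor $\kappa_N > 0$. Specializing to $N = 4n$ reduces the phase $N\pi/2$ to a multiple of $2\pi$, while $N = 4n+2$ introduces an extra sign, so that
$$\sgn H_{4n}(a) \;=\; \sgn \cos(\sqrt{8n+1}\,a) \qquad\text{and}\qquad \sgn H_{4n+2}(a) \;=\; -\sgn \cos(\sqrt{8n+5}\,a),$$
at least whenever $|\cos(\cdot)| \geq 1/2$, so that the $O(N^{-1/2})$ error is dominated. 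Since $k$ is finite, what one needs is a single sequence of indices $n$ for which the cosine stays bounded away from zero and has the correct sign simultaneously at every $a_j$.

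This reduces the theorem to a torus-flow question: for the vector $a = (a_1, \ldots, a_k) \in \R^k$ and the sampling times $t_n = \sqrt{8n+1}$ (respectively $\sqrt{8n+5}$), the points $\phi(t_n) := t_n\, a \pmod{2\pi \Z^k}$ must enter any prescribed small neighborhood $U$ of the origin in $\T^k$ for infinitely many $n$. The continuous one-parameter subgroup $\phi\colon \R\to\T^k$, $\phi(t) = ta \bmod 2\pi \Z^k$, is uniformly almost periodic (being a continuous homomorphism into a compact abelian group), and Bohr's theorem --- or, equivalently, minimality of the Kronecker flow on the closed subtorus $\overline{\phi(\R)}\subseteq\T^k$ --- implies that the return-time set $T_U = \phi^{-1}(U)$ is open, contains $0$, and is syndetic in $\R$: there exist $L,\delta > 0$ such that every interval of length $L$ in $\R$ contains a subinterval of length $\delta$ entirely inside $T_U$.

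The final ingredient is that the sequence $(t_n)$ is eventually arbitrarily dense at every scale: from
$$t_{n+1} - t_n \;=\; \frac{8}{\sqrt{8n+9} + \sqrt{8n+1}} \;\longrightarrow\; 0$$
one sees that $\{t_n : n \geq N_0\}$ is $\delta'$-dense in $[t_{N_0}, \infty)$ once $N_0$ is large, and choosing $\delta' < \delta$ forces $t_n \in T_U$ for infinitely many $n$. Combined with the previous step this yields $H_{4n}(a_j) > 0$ (respectively $H_{4n+2}(a_j) < 0$) for all $j$, for infinitely many $n$. The main technical obstacle I anticipate is quantitative control of the Plancherel--Rotach remainder simultaneously at each $a_j$; since a uniform bound over the compact set $\{a_1,\ldots,a_k\}$ is available, this is essentially bookkeeping. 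A secondary subtlety is extracting the ``intervals of uniform length $\delta$'' form of the return statement, but this follows from the continuity of $\phi$ at the origin together with the syndetic property of almost-period sets, and could alternatively be obtained by first reducing to the subtorus $\overline{\phi(\R)}$ on which the induced flow is irrational Kronecker.
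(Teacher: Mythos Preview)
Your proposal is correct and follows the same overall strategy as the paper: reduce via the Hermite asymptotic expansion \eqref{simplifiedHermiteAsympt} to showing that the linear flow $t\mapsto t(a_1,\dots,a_k)$ on $\T^k$ returns to a neighborhood of the origin along the sampling sequence $t_n=\sqrt{8n+1}$, and then exploit the sublinear spacing $t_{n+1}-t_n\to 0$ to pass from continuous to discrete time. The one substantive difference lies in the recurrence input. The paper proves a short self-contained lemma (Lemma~\ref{closeto0}) via Poincar\'e recurrence for the discrete translation $x\mapsto x+\textbf{a}$ on $\T^k$, obtaining infinitely many \emph{integer} return times to any $\varepsilon$-ball; you instead invoke Bohr almost-periodicity/minimality of the Kronecker flow to obtain the stronger syndetic structure of the return set $T_U$, together with the ``uniform-$\delta$ interval'' refinement. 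Your route yields more (a positive density of good indices $n$ rather than merely infinitely many) at the cost of citing somewhat heavier machinery; the paper's argument is more elementary but gives only the bare infinitude needed for the statement.
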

Variants of this statement should  hold 
for `generic' families of orthogonal functions. In fact, we prove similar results for Laguerre polynomials, as well
as for certain linear combinations of Hermite polynomials that appear naturally in the one-dimensional proof of Theorem \ref{ext}. 
We believe this question, namely, to which extent do sequences of orthogonal functions realize particular sign patterns when simultaneously evaluated at a prescribed finite set of distinct  points, to be of independent interest and further comment on it below. The second part of the proof of Theorem \ref{ext} works only in higher dimensions $d\geq 2$, and makes use of Laguerre expansions of radial functions.

\subsection{Bounds in higher dimensions}
A version of Theorem \ref{BCK1} holds in higher dimensions.

\begin{theorem}[Bourgain, Clozel \& Kahane]  \label{BCKhigherD}
Let $d\geq 2$. Let $f \in L^1(\mathbb{R}^d)$ be a nonzero, real-valued,  radial function such that $f(0) \leq 0 $, $\widehat{f} \in L^1(\mathbb{R}^d)$ and $\widehat{f}(0) \leq 0$. Then
$$A(f)A(\widehat{f})  \geq \frac{1}{\pi}\Big(\frac1 2 \Gamma\Big(\frac d 2 +1\Big)\Big)^{\frac 2 d},$$
and this lower bound cannot be replaced by $(d+2)/2\pi$.
\end{theorem}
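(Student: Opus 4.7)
The theorem is attributed to Bourgain, Clozel and Kahane \cite{BCK}, so my plan mirrors their two-step argument: an elementary Fourier-analytic lower bound, followed by an explicit construction witnessing the sharpness constraint.

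\textbf{Lower bound.} Writing $V_d=\pi^{d/2}/\Gamma(d/2+1)$ for the volume of the unit Euclidean ball in $\R^d$, the stated inequality is equivalent to the clean geometric statement
\[
\mathrm{Vol}(B_{A(f)})\cdot \mathrm{Vol}(B_{A(\widehat f)})\geq \tfrac{1}{4},
\]
which already suggests using Plancherel against indicators of balls. Set $r=A(f)$ and $s=A(\widehat f)$. The assumption $\widehat f(0)\leq 0$ combined with $f\geq 0$ on $B_r^c$ forces
\[
\int_{B_r}f(x)\,dx\leq 0,
\]
and symmetrically $\int_{B_s}\widehat f(y)\,dy\leq 0$. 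Applying Parseval between $f$ and $\chi_{B_s}$ gives
\[
\int_{B_s}\widehat f(y)\,dy=\int_{\R^d}f(x)\,\widehat{\chi_{B_s}}(x)\,dx,
\]
where $\widehat{\chi_{B_s}}(x)=(s/|x|)^{d/2}J_{d/2}(2\pi s|x|)$ is a dilated radial Bessel kernel, pointwise bounded by $V_s$ and strictly positive on $|x|<j_{d/2,1}/(2\pi s)$. The strategy is to split the right-hand side into integrals over $B_r$ and its complement, use the sign information on $f$ together with the positivity and pointwise boundedness of the kernel to derive a one-sided estimate, and conclude that $rs$ must be large enough to force the clean volume inequality $V_r V_s\geq 1/4$.

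\textbf{Upper bound.} I would exhibit a radial $+1$-eigenfunction of $\mathcal{F}$ that vanishes at the origin and whose last sign change occurs as early as possible. The natural candidates are the Laguerre--Gaussian functions
\[
\phi_n(x)=L_n^{(d/2-1)}(2\pi|x|^2)\,e^{-\pi|x|^2},
\]
which satisfy $\widehat{\phi_n}=(-1)^n\phi_n$. Taking $f=\phi_2-\tfrac{d(d+2)}{8}\phi_0$ and using the explicit formula for $L_2^{(d/2-1)}$ produces $f(0)=0=\widehat f(0)$ and
\[
f(r)=\pi r^2\bigl(2\pi r^2-(d+2)\bigr)e^{-\pi r^2},
\]
whose last sign change occurs at $r^2=(d+2)/(2\pi)$, so that $A(f)A(\widehat f)=(d+2)/(2\pi)$. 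An admissible perturbation by a suitably signed small multiple of $\phi_4$ then pushes this last zero strictly inward while preserving membership in the class, yielding the non-sharpness statement.

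\textbf{Main obstacle.} The delicate step is the lower bound: the Bessel kernel estimate on $\widehat{\chi_{B_s}}$ must be calibrated precisely enough that no constant is lost en route to the volume inequality. Any loose factor when splitting the integral, or when controlling the oscillatory tail of $J_{d/2}$ on $B_r^c$, would degrade the dimensional dependence and yield a weaker bound. The upper bound construction, by contrast, is a finite-dimensional algebraic computation, and the gap between the two sides $\Gamma(d/2+1)^{2/d}/(2^{2/d}\pi)$ versus $(d+2)/(2\pi)$ is precisely what the authors propose to narrow elsewhere in the paper.
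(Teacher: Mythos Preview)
Your upper-bound construction is correct: the Laguerre--Gaussian combination does give $f(r)=\pi r^{2}(2\pi r^{2}-(d+2))e^{-\pi r^{2}}$ with $A(f)^{2}=(d+2)/(2\pi)$. One caveat on the perturbation: a bare multiple of $\phi_4$ spoils the constraint $f(0)\le 0$, since $\phi_4(0)=L_4^{d/2-1}(0)>0$; you must use a zero-preserving combination such as $\phi_4-\phi_4(0)\phi_0$ (the Laguerre analogue of the functions $\varphi_n$ in \S\ref{sec:flows}) and then check its sign at the simple root.

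For the lower bound you have correctly reformulated the target as $\mathrm{Vol}(B_{A(f)})\cdot\mathrm{Vol}(B_{A(\widehat f)})\ge\tfrac14$, but the Parseval/Bessel-kernel route is a detour, and your own ``main obstacle'' paragraph already betrays that you do not see how to close it without loss. The paper's argument (following \cite{BCK}; see the Remark after Lemma~\ref{trivialLB}) uses no Bessel analysis whatsoever: from $\widehat f(0)\le 0$ one gets $\int f^{-}\ge\tfrac12\|f\|_{L^1}$, and since $f^{-}$ is supported in $B_{A(f)}$ with $\|f\|_{L^\infty}\le\|\widehat f\|_{L^1}$, this yields $\tfrac12\|f\|_{L^1}\le \mathrm{Vol}(B_{A(f)})\,\|\widehat f\|_{L^1}$. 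Multiplying by the symmetric inequality with $f$ and $\widehat f$ interchanged gives the volume bound in one line. The kernel identity you wrote down is in fact the starting point of \S\ref{sec:HigherDim}, where it is used---after first reducing to $f=\widehat f$---to prove the \emph{stronger} Theorem~\ref{Dthm}; without that reduction, the split over $B_r$ and $B_r^{c}$ you propose has no evident way to handle $B_r^{c}$, where $f\ge 0$ but $\widehat{\chi_{B_s}}$ changes sign.
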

As an immediate consequence, we have 
\begin{equation}\label{lineargrowth}
\frac{d}{2\pi e}<\inf_{f} A(f)A(\widehat{f}) < \frac{d+2}{2\pi},
\end{equation}
where the infimum is taken over all functions $f$ satisfying the assumptions of Theorem \ref{BCKhigherD}. The linear growth in terms of dimension given by inequalities \eqref{lineargrowth} is expected in a wider class of related situations. 
The last chapter of the paper \cite{BCK} shows that this problem and its solution are naturally related to the theory of zeta-functions in algebraic number fields.
 Arithmetic arguments show that the linear growth of the bounds with respect to dimension is natural in view of known properties of ramifications of these fields. 
We show that a variation of the original argument employed in \cite{BCK} to handle the one-dimensional case can be used to improve the lower bound in all higher dimensions. 
\begin{theorem}\label{Dthm} 
Let $d\geq 2$. Let $f \in L^1(\mathbb{R}^d)$ be a nonzero real-valued,  radial function such that $f(0) \leq 0 $, $\widehat{f} \in L^1(\mathbb{R}^d)$ and $\widehat{f}(0) \leq 0$. Then:
$$A(f)A(\widehat{f}) \geq \frac{1}{\pi}\Big(\frac1 {1+\lambda_d} \Gamma\Big(\frac d 2 +1\Big)\Big)^{\frac 2 d},$$
where the number $\lambda_d$ is defined in terms of the Bessel function $J_{d/2}$ as
$$\lambda_d:=-\inf_{u\in\R_+}\frac{\Gamma\Big(\frac d 2+1\Big) J_{ d/2}(u)}{(u/2)^{d/2}}.
$$
Moreover, $\lambda_d<\frac1 2$ for every $d\geq 2$, and $\lambda_d\to 0$ as $d\to\infty$ exponentially fast.
\end{theorem}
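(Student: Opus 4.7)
The plan is to refine the Fourier-analytic argument of Bourgain, Clozel, and Kahane (which underlies Theorem \ref{BCKhigherD} and its constant $1/2$) by replacing the crude bound $\widehat{\mathbf{1}_{B_r}}(y) \geq -|B_r|$ with the sharp bound $\widehat{\mathbf{1}_{B_r}}(y) \geq -\lambda_d|B_r|$. The latter is an immediate consequence of the classical Bessel formula
\[
\widehat{\mathbf{1}_{B_r}}(y) \;=\; |B_r| \cdot \frac{\Gamma(d/2+1)\, J_{d/2}(2\pi r|y|)}{(\pi r|y|)^{d/2}},
\]
which, after the substitution $u = 2\pi r|y|$, shows that the range of $\widehat{\mathbf{1}_{B_r}}/|B_r|$ is precisely $[-\lambda_d, 1]$. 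By the dilation symmetry of the problem, I normalize to $A(f) = A(\widehat{f}) = r$, reducing the target inequality to $|B_r|(1+\lambda_d) \geq 1$.

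The main Fourier-analytic step invokes Parseval's identity
\[
\int_{B_r} \widehat{f}(y)\, dy \;=\; \int_{\R^d} f(x)\, \widehat{\mathbf{1}_{B_r}}(x)\, dx,
\]
valid because $f$ and $\mathbf{1}_{B_r}$ are real and even. I would split the right-hand side over $\{|x|\leq r\}$ and $\{|x|>r\}$: on the outer region, where $f\geq 0$, apply the sharp lower bound $\widehat{\mathbf{1}_{B_r}}\geq -\lambda_d|B_r|$; on the inner region, decompose $f = f_+ - f_-$ and use both sides of $-\lambda_d|B_r|\leq \widehat{\mathbf{1}_{B_r}}\leq |B_r|$. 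A parallel decomposition of the symmetric identity $\int_{B_r} f = \int \widehat{f}\,\widehat{\mathbf{1}_{B_r}}$, combined with the conditions $\int f = \widehat{f}(0)\leq 0$ and $\int\widehat{f} = f(0)\leq 0$, produces a chain of inequalities involving $\lambda_d$, $|B_r|$, $|f(0)|$, $|\widehat{f}(0)|$, and the masses $\int f_-$, $\int \widehat{f}_-$; eliminating the latter two yields the desired bound $(1+\lambda_d)|B_r|\geq 1$.

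For the bounds on $\lambda_d$, I would separate the dimensional range into small and large $d$. For small $d$ (say $2\leq d\leq d_0$ for some explicit $d_0$), verify $\lambda_d<1/2$ by locating numerically the first negative local minimum of $u\mapsto \Gamma(d/2+1)J_{d/2}(u)/(u/2)^{d/2}$ and checking that its value exceeds $-1/2$. For large $d$, both $\lambda_d<1/2$ and the exponential decay follow from uniform large-order asymptotics of Bessel functions: for $\nu=d/2\gg 1$, the function $J_\nu(u)$ is exponentially small on $(0,\nu)$, of order $\nu^{-1/3}$ in the transition region $u\approx \nu$, and oscillates with amplitude $\sim u^{-1/2}$ for $u>\nu$. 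The infimum of $J_\nu(u)/(u/2)^\nu$ is attained near the turning point $u_*\approx \nu$; combined with Stirling's formula $\Gamma(\nu+1)\sim \sqrt{2\pi\nu}\,(\nu/e)^\nu$, this produces an upper bound of the form $\lambda_d\lesssim \sqrt{d}\,(2/e)^{d/2}$, exponentially small in $d$.

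The hard part is twofold. The Fourier-analytic combination requires delicate bookkeeping to eliminate the auxiliary quantities $\int f_-$ and $\int \widehat{f}_-$ so as to extract the clean statement $(1+\lambda_d)|B_r|\geq 1$, and I expect the right combination to come from adding or multiplying the two symmetric Parseval inequalities in a manner that removes these masses. The Bessel-function analysis supporting the exponential decay of $\lambda_d$ requires uniform asymptotic expansions of Debye-Olver type valid through the transition region near $u\approx d/2$; moreover, verifying $\lambda_d<1/2$ for every $d\geq 2$ (and not merely asymptotically) demands that the small-$d$ numerical checks match cleanly with the large-$d$ asymptotics across the full dimensional range.
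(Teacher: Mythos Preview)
Your identification of the Bessel formula for $\widehat{\mathbf 1_{B_r}}$ and of $\lambda_d$ as the sharp lower bound for $\widehat{\mathbf 1_{B_r}}/|B_r|$ is exactly right, and this is indeed the object that drives the improvement over Theorem~\ref{BCKhigherD}. But the Parseval scheme you outline does not close, and the paper's argument is genuinely different at the key step.

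After the standard reductions (which the paper carries out fully: $f=\widehat f$, $f(0)=0$, $\|f\|_{L^1}=1$), the Parseval identity $\int_{B_A}f=\int f\,\widehat{\mathbf 1_{B_A}}$ together with the bounds $-\lambda_d|B_A|\le \widehat{\mathbf 1_{B_A}}\le |B_A|$ only yields
\[
\int_{B_A}f^{+}-\tfrac12 \;\ge\; -\tfrac{1+\lambda_d}{2}\,|B_A|,
\]
i.e.\ $1-2\int_{B_A}f^{+}\le(1+\lambda_d)|B_A|$. The nuisance term $\int_{B_A}f^{+}$ cannot be eliminated by ``adding or multiplying the two symmetric Parseval inequalities'': once $f=\widehat f$ those two identities coincide, and if you instead feed in the crude bound $\int_{B_A}f^{+}\le |B_A|-\tfrac12$ (from $\|f\|_\infty\le1$) you land at $2\le(3+\lambda_d)|B_A|$, which is \emph{worse} than the BCK bound. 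The missing idea is a \emph{pointwise} inequality, not an integrated one: from $f(x)=\int f(y)\cos(2\pi x\cdot y)\,dy$ and $\int f=0$ the paper extracts
\[
f^{-}(x)\;\le\;\int_{\R^d} f^{+}(y)\,\bigl(1-\cos(2\pi x\cdot y)\bigr)\,dy,
\]
by writing $f^{-}(x)-f^{+}(x)=\int f^{+}(1-\cos)-\int f^{-}(1-\cos)$ and discarding the two nonnegative terms $f^{+}(x)$ and $\int f^{-}(1-\cos)$. Only after this pointwise step does one integrate over $B_A$; the inner integral is then $|B_A|-\widehat{\mathbf 1_{B_A}}(y)\le(1+\lambda_d)|B_A|$, and since $\int f^{+}=\int_{B_A}f^{-}=\tfrac12$ one reads off $1\le(1+\lambda_d)|B_A|$ directly, with no auxiliary masses to eliminate.

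On the $\lambda_d$ analysis your plan is correct but heavier than necessary. The paper does not need Debye--Olver asymptotics through the transition region: it shows that the infimum defining $\lambda_d$ is attained at $u=j_{d/2+1}$ (the first positive zero of $J_{d/2+1}$), then uses only the elementary inputs $|J_{d/2}|\le1$, $j_{d/2+1}>d/2+1$, and Stirling to obtain $\lambda_d\le C\sqrt{d}\,(2/e)^{d/2}$, checking $d\le 9$ numerically. Your more refined asymptotics would of course also work.
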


\subsection{Overview}
The paper is organized as follows. We gather relevant information about Hermite functions, Bessel functions and Laguerre polynomials in \S \ref{sec:PreApp}, together with a brief digression on one-dimensional rearrangements of functions. We perform a number of elementary reductions in \S \ref{sec:Reductions}, and establish the aforementioned lower bound of 1/16 in Lemma \ref{trivialLB} below. We prove Theorem \ref{1Dthm} in \S \ref{sec:1DThm}. We proceed in two steps, first proving the lower bound and then establishing the upper bound via an explicit example. The next \S \ref{sec:flows} is devoted to the study of linear flows on the torus. In particular, we establish a result that will play a role in the one-dimensional proof of Theorem \ref{ext}, and additionally prove Theorem \ref{Hermite}. Extremizers for inequality \eqref{sharp} are studied in \S \ref{sec:Structure}, where we prove Theorems \ref{existence-thm} and \ref{ext}. Finally, \S \ref{sec:HigherDim} is devoted to the proof of Theorem \ref{Dthm}.\\

\noindent{\bf Acknowledgements.} The authors are grateful to Ronald R. Coifman, Jo\~ao Pedro Ramos and Christoph Thiele for various useful comments and suggestions. F.G. is supported by CNPQ-Brazil Post-Doctoral Junior Fellowship 150386/2016-8, D.O.S. is supported by the Hausdorff Center for Mathematics, and S.S. is supported by an AMS-Simons Travel Grant and INET Grant \#INO15-00038.
This work was started during a pleasant visit of the third author to the Hausdorff Institute for Mathematics, whose hospitality is greatly appreciated. 

\section{Special functions, rearrangements and integrals over spheres}\label{sec:PreApp}
The purpose of this chapter is to collect various facts which will  appear in the arguments below in order to keep the paper as self-contained as possible.

\subsection{Hermite functions}\label{ssec:Hermite} 
The Hermite polynomials constitute an orthogonal family on the real line with respect to the Gaussian measure. They can be defined for $n\in\N$ and $x\in\R$ as follows:
$$
H_n(x):=(-1)^n e^{x^2}\frac{d^n}{dx^n}(e^{-x^2}).
$$
The orthogonality formula
\begin{equation}\label{HermiteOrthogonality}
\int_{-\infty}^\infty H_n(x)H_m(x) e^{-x^2} dx= 2^n n! \sqrt{\pi}\delta{(n-m)}
\end{equation}
can be checked via $\max\{m,n\}$ integrations by parts, or can be taken as an alternative definition as is done in \cite{Sz}. We use the following asymptotic expansion for Hermite polynomials \cite[Theorem 8.22.6 and (8.22.8)]{Sz}
\begin{equation}\label{HermiteAsympt}
\frac{\Gamma(n/2+1)}{\Gamma(n+1)}e^{-\frac{x^2}2} H_n(x) 
= \cos{  \left(\sqrt{2n+1}x - \frac{n \pi}{2} \right)} + \frac{x^3}{6}\frac{1}{\sqrt{2n+1}}\sin{\left(\sqrt{2n+1} x - \frac{n \pi}{2}\right)} + \mathcal{O}\left(\frac{1}{n}\right),
\end{equation}
which is valid for any fixed $x \in \mathbb{R}$ as $n\to\infty$. Indeed, as pointed out in \cite{Sz}, the result holds on compact intervals with a uniformly bounded constant in the error term. For all but one application, the simpler
expansion
\begin{equation}\label{simplifiedHermiteAsympt}
\frac{\Gamma(n/2+1)}{\Gamma(n+1)}e^{-\frac{x^2}2} H_n(x) 
= \cos{  \left(\sqrt{2n+1}x - \frac{n \pi}{2} \right)}  + \mathcal{O}\left(\frac{1}{\sqrt{n}}\right)
\end{equation}
will suffice. 
The rescaled Hermite functions 
$$\psi_n(x):= \frac{2^{1/4}}{\sqrt{2^n n!}}H_n(\sqrt{2\pi} x) e^{-\pi x^2}$$
form an orthonormal basis of $L^2(\R)$ and are a set of eigenfunctions for the Fourier transform normalized as in \eqref{FT}. More precisely, we have that
$$\widehat{\psi_n}=(-i)^{n(\textrm{mod} 4)}\psi_n.$$
In particular, a function $f\in L^2(\R)$  equals its own Fourier transform if and only if it admits an expansion of the form
\begin{equation}\label{HermiteExpansion}
f(x)=\sum_{n=0}^\infty a_n \psi_{4n}(x)
\end{equation}
for a (necessarily unique) set of coefficients $\{a_n\}\subset\ell^2(\N)$.

 \subsection{Gamma function}
The Gamma function is defined for $\Re(s)>0$ as
\begin{equation}\label{def_Gamma}
\Gamma(s)=\int_0^\infty e^{-t} t^{s-1} dt.
\end{equation}
It satisfies the functional equation $s\Gamma(s)=\Gamma(s+1)$ and thus constitutes a meromorphic extension of the factorial: $\Gamma(n+1)=n!$ for every $n\in\N$. The following version of Stirling's formula \cite{R} will be useful. For every $x\geq 0$,
 \begin{equation}\label{Stirling}
 \Gamma(x)=\sqrt{2\pi}x^{x-1/2}e^{-x}e^{\mu(x)} \qquad \mbox{where} \quad \frac1{12x+1}<\mu(x)<\frac1{12x}.
 \end{equation}
 
 \subsection{Bessel functions} 
The Bessel function of the first kind $J_\nu$ can be defined in a number of ways. We follow the treatise \cite{W} and define it for $\nu > -1$ and $\Re(z) >0$ by 
\begin{equation}\label{def_Bessel}
J_\nu(z) = \Big(\frac z 2\Big)^\nu \sum_{n=0}^{\infty} \frac{(-1)^n \big(\tfrac z 2\big)^{2n}}{n!\, \Gamma(\nu+n+1)}.
\end{equation}
One can check that Bessel functions satisfy the  differential equation
\begin{equation}\label{BesselODE}
z^2J''_\nu(z)+z J_\nu'(z)+(z^2-\nu^2)J_\nu(z)=0,
\end{equation}
and that the following recursion relations hold
\begin{align}
J_{\nu-1}(z)-J_{\nu+1}(z)&=2 J'_\nu(z),\label{Recurrence1}\\
J_{\nu-1}(z)+J_{\nu+1}(z)&=\frac{2\nu}{z} J_\nu(z).\label{Recurrence2}
\end{align}
An alternative definition of the Bessel functions, valid for all values of $\nu>-1/2$, is contained in the following Poisson integral representation:
\begin{equation}\label{alt_Bessel}
J_\nu(z)=\frac{(z/2)^\nu}{\Gamma(\frac 1 2)\Gamma(\nu+\frac1 2)} \int_{-1}^1 e^{izt} (1-t^2)^{\nu-\frac 1 2} dt.
\end{equation}
To verify equivalence of the two definitions, one can integrate by parts to check that the right-hand side of identity \eqref{alt_Bessel} satisfies both recurrence relations \eqref{Recurrence1} and \eqref{Recurrence2}, and then appeal to a uniqueness result for ordinary differential equations. Any of the two definitions can be used to check the following uniform estimate, valid for every $\nu\geq 0$ and $x\in\R$:
$$|J_\nu(x)|\leq 1.$$
We will need to know the value of some finite integrals involving Bessel functions.
\begin{lemma}\label{BesselComp}
Let $\nu,\rho>0$. Then:
$$\int_0^\rho J_{\nu-1}(r)r^{\nu}dr=J_{\nu}(\rho) \rho^{\nu}.$$
\end{lemma}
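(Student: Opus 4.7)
The plan is to recognize the integrand as an exact derivative, namely that $r^{\nu}J_{\nu-1}(r) = \frac{d}{dr}\bigl[r^\nu J_\nu(r)\bigr]$, and then apply the fundamental theorem of calculus, checking that the boundary term at zero vanishes.

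To derive the key derivative identity I would combine the two recurrence relations \eqref{Recurrence1} and \eqref{Recurrence2}: adding them yields
$$
2 J_{\nu-1}(z) = 2 J'_\nu(z) + \frac{2\nu}{z} J_\nu(z),
$$
and multiplying by $z^\nu$ gives
$$
z^\nu J_{\nu-1}(z) = z^\nu J'_\nu(z) + \nu z^{\nu-1} J_\nu(z) = \frac{d}{dz}\bigl[z^\nu J_\nu(z)\bigr].
$$
Integrating from $0$ to $\rho$ then gives
$$
\int_0^\rho r^\nu J_{\nu-1}(r)\,dr = \rho^\nu J_\nu(\rho) - \lim_{r\to 0^+} r^\nu J_\nu(r).
$$

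It remains to check that the boundary contribution at $r=0$ vanishes. This follows from the series definition \eqref{def_Bessel}, which gives $J_\nu(r) = (r/2)^\nu/\Gamma(\nu+1) + O(r^{\nu+2})$ near zero, so $r^\nu J_\nu(r) = O(r^{2\nu}) \to 0$ as $r\to 0^+$ since $\nu>0$. This completes the proof.

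There is no real obstacle here; the only mild subtlety is the boundary term at the origin, which is handled by the leading-order behavior of $J_\nu$. The argument is a direct application of the recurrence relations already recorded in the preceding subsection.
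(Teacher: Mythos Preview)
Your proof is correct, but it follows a different route from the paper. The paper's argument simply plugs in the power series \eqref{def_Bessel} for $J_{\nu-1}$ and integrates term by term on the compact interval $[0,\rho]$, which is justified by uniform convergence; the resulting series is then recognized as $\rho^\nu J_\nu(\rho)$. Your approach instead combines the two recurrence relations \eqref{Recurrence1} and \eqref{Recurrence2} to exhibit $r^\nu J_\nu(r)$ as an explicit antiderivative of the integrand, and then invokes the fundamental theorem of calculus together with the small-$r$ behavior of $J_\nu$ to dispose of the boundary term at the origin. Both arguments are short and classical; yours has the mild advantage of using only identities already recorded in the paper and of making the vanishing at the origin explicit, while the paper's version avoids any appeal to the recurrence relations and works directly from the definition.
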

\begin{proof}
Use the series representation \eqref{def_Bessel} for the function $J_{\nu-1}$ and integrate term by term. This is allowed in view of the uniform convergence of the series and the compactness of $[0,\rho]$.
\end{proof}

\noindent Another classical observation is the following: maxima and minima of Bessel functions along the positive half-line $\R_+:=\{x\in\R: x> 0\}$ steadily decrease in absolute value as $|x|$ increases. 
\begin{lemma}\label{BesselExt}
For $\nu>0$, 
let $\{\theta_k^{\nu}\}$ be the ordered sequence of stationary points of the function $J_\nu$ on the positive half-line, i.e., $0<\theta_0^{\nu}<\theta_1^{\nu}<\theta_2^{\nu}<\ldots$ and $J'_\nu(\theta_k^\nu)=0$ for every $k\in\N$. 
Then the sequence $\{|J_\nu(\theta_k^{\nu})|\}$ is monotonically decreasing in $k$.
\end{lemma}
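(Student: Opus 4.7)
The plan is to introduce a Lyapunov-type auxiliary function
$$F(x) := J_\nu(x)^2 + \frac{x^2}{x^2-\nu^2}\, J_\nu'(x)^2 \qquad (x > \nu),$$
designed so that (i) $F$ is non-increasing and (ii) $F(\theta_k^\nu) = J_\nu(\theta_k^\nu)^2$ at every stationary point. Differentiating and using the Bessel ODE \eqref{BesselODE} in the form $x^2 J_\nu''(x) = -x J_\nu'(x) - (x^2-\nu^2) J_\nu(x)$ to eliminate $J_\nu''$ produces, after cancellation of the cross terms $\pm 2 J_\nu J_\nu'$, the clean identity
$$F'(x) = -\frac{2 x^3}{(x^2-\nu^2)^2}\, J_\nu'(x)^2 \leq 0,$$
with equality only at the isolated zeros of $J_\nu'$. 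Hence $F$ is in fact strictly decreasing on $(\nu,\infty)$.

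Next I would verify that every positive stationary point $\theta_k^\nu$ lies in $(\nu, \infty)$, so that the analysis captures the whole sequence. At such a point, $J_\nu'(\theta_k^\nu) = 0$ reduces the Bessel ODE to $(\theta_k^\nu)^2 J_\nu''(\theta_k^\nu) = -\bigl((\theta_k^\nu)^2 - \nu^2\bigr) J_\nu(\theta_k^\nu)$; since at a local extremum $J_\nu$ and $J_\nu''$ carry opposite signs whenever $J_\nu(\theta_k^\nu) \neq 0$, one forces $(\theta_k^\nu)^2 > \nu^2$. In the degenerate case $J_\nu(\theta_k^\nu) = 0$, the desired inequality $|J_\nu(\theta_k^\nu)| \leq |J_\nu(\theta_{k-1}^\nu)|$ is automatic.

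Combining (i) and (ii) at two consecutive stationary points $\theta_k^\nu < \theta_{k+1}^\nu$ yields
$$J_\nu(\theta_k^\nu)^2 = F(\theta_k^\nu) > F(\theta_{k+1}^\nu) = J_\nu(\theta_{k+1}^\nu)^2,$$
which is the asserted strict monotone decrease of $\{|J_\nu(\theta_k^\nu)|\}$. The main obstacle I anticipate is the careful justification that the first stationary point $\theta_0^\nu$ strictly exceeds $\nu$ (so that $F$ is well-defined on a neighborhood of it); this is a classical fact that can alternatively be derived by Sturm comparison between the Bessel equation and the constant-coefficient model $y'' + y = 0$, but the sign bookkeeping near the turning point $x = \nu$ must be handled with a bit of care, especially to rule out the non-generic coincidence $\theta_0^\nu = \nu$.
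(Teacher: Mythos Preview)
Your proposal is correct and follows essentially the same route as the paper: the auxiliary function $F$ you introduce is exactly the paper's $M(x)=J_\nu(x)^2+\dfrac{x^2 J_\nu'(x)^2}{x^2-\nu^2}$, and your derivative identity $F'(x)=-\dfrac{2x^3}{(x^2-\nu^2)^2}J_\nu'(x)^2$ coincides with theirs. The only minor difference is the justification of $\theta_0^\nu\geq\nu$: the paper rewrites the ODE as $x\,\tfrac{d}{dx}\bigl(xJ_\nu'(x)\bigr)=(\nu^2-x^2)J_\nu(x)$ and argues directly that $xJ_\nu'(x)$ stays positive and increasing on $(0,\nu)$, whereas you deduce it from the sign relation between $J_\nu$ and $J_\nu''$ at an extremum; both arguments yield the same conclusion.
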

\begin{proof}
We start by arguing as in \cite[p.~485--486]{W} to see that $\theta_0^{\nu}\geq\nu$. From the power series \eqref{def_Bessel} for $J_\nu(x)$ and the corresponding one for $J'_\nu(x)$ it is obvious that these functions are positive for sufficiently small values of $x>0$. Equation \eqref{BesselODE} can be rewritten as
$$x\frac{d}{dx}\Big(xJ'_\nu(x)\Big)=(\nu^2-x^2)J_\nu(x),$$
from which one sees that, as long as $x<\nu$ and $J_\nu(x)$ is positive, the function $x J'_\nu(x)$ is positive and increasing. It follows that $\theta_0^{\nu}$ cannot be less than $\nu$, as claimed.
Let us now consider the following auxiliary function:
$$M(x):=J_\nu^2(x)+\frac{x^2 J_\nu'(x)^2}{x^2-\nu^2}.$$
The differential equation \eqref{BesselODE} implies that 

$$M'(x)=-2x^3\Big(\frac{ J'_\nu(x)}{x^2-\nu^2}\Big)^2<0\textrm{ for every }x\geq\nu.$$
  Since we already established the lower bound $\theta_0^\nu\geq \nu$, it follows that 
the sequence $\{M(\theta_k^{\nu})\}$ decreases monotonically as $k$ increases. But $M(\theta_k^{\nu})=J^2_\nu(\theta_k^{\nu})$, and so the same holds for the sequence $\{|J_\nu(\theta_k^{\nu})|\}$.
\end{proof}

\subsection{Integrals over spheres}
Let $(\mathbb{S}^{d-1},\sigma_{d-1})$ denote the $(d-1)$-dimensional unit sphere equipped with the standard surface measure $\sigma_{d-1}$. We omit the subscript on $\sigma_{d-1}$ when clear from the context, and denote the total surface measure of the unit sphere by
\begin{equation}\label{area_sphere}
\omega_{d-1}:=\sigma\big(\mathbb{S}^{d-1}\big) = \frac{2\,\pi^{d/2}}{\Gamma(d/2)}.
\end{equation}
In polar coordinates, a measurable function $f:\R^d\rightarrow\R$ can be integrated as follows:
\begin{equation}\label{polar_integration}
\int_{\R^d} f(x)dx=\int_0^\infty \Big(\int_{\mathbb{S}^{d-1}} f(rx)d\sigma(x)\Big) r^{d-1}dr.
\end{equation}
In the case of a radial function $f(x)=f(|x|)$, this  boils down to
$$\int_{\R^d} f(x)dx=\omega_{d-1}\int_0^\infty f(r) r^{d-1}dr.$$
The following formula can be found in \cite[Lemma A.5.2]{DX} and allows for integration of radial  functions on the sphere, i.e., functions which depend only on the inner product with a fixed direction $x\in\R^d$.
\begin{equation}\label{innerproduct_int}
\int_{\mathbb{S}^{d-1}} f(x\cdot v) d\sigma(v)=\omega_{d-2} \int_{-1}^1 f(|x|t) (1-t^2)^{\frac{d-3}{2}}dt.
\end{equation}

\subsection{Laguerre polynomials}\label{ssec:Laguerre}
For every $\nu>-1$, the Laguerre polynomials $L_n^\nu(t)$, $n=0,1,2,...$, can be defined as the orthogonal polynomials associated with the measure $d\mu_\nu(t)=t^{\nu}e^{-t}dt$, for $t>0$, up to  multiplication by a scalar. In fact, they are defined in such way that $L_n^\nu(t)$ has degree $n$, is orthogonal to $\{1,t...,t^{n-1}\}$ with respect to the measure $d\mu_\nu(t)$, and

\begin{equation}\label{high-power-laguerre}
L_n^\nu(t) = (-1)^n\frac{t^n}{n!} + \text{lower order terms}.
\end{equation}
It can be shown that
\begin{equation}\label{Laguerre_orthogonality}
\int_{0}^\infty L_n^\nu(t)L_m^\nu(t)t^{\nu}e^{-t}dt = \frac{\Gamma(n+\nu+1)}{n!}\delta(n-m).
\end{equation}
Laguerre polynomials satisfy the following asymptotic identity due to Fej\'er
\begin{equation}\label{laguerre-asymp}
x^{\nu/2+1/4}e^{-x/2}L_n^\nu(x) = \pi^{-1/2}n^{\nu/2-1/4}\cos\bigg(2\sqrt{nx}-\frac{\nu\pi}{2}-\frac{\pi}{4}\bigg) + \mathcal{O}(n^{\nu/2-3/4}),
\end{equation}
where the bound for the remainder holds uniformly for $x$ in any compact subset of $(0,\infty)$. We also have that
\begin{equation}\label{laguerre-at-zero}
L_n^\nu(0) = \binom {n+\nu} n \sim \frac{n^{\nu}}{\Gamma(\nu+1)},
\end{equation}
and the following generating function
\begin{equation}\label{laguerre-gen-func}
\sum_{n=0}^\infty t^n L_n^\nu(x) = \frac{e^{-tx/(1-t)}}{(1-t)^{\nu+1}},
\end{equation}
where the limit is uniform for $x$ in any compact set of $(0,\infty)$, for fixed $t\in(-1,1)$.
It is well-known that Laguerre polynomials form an orthogonal basis of the space $L^2(\R_+,d\mu_\nu)$. In other words, if $f:\R_+\to \mathbb{C}$ is a measurable function such that
$$
\int_{0}^\infty |f(t)|^2t^{\nu}e^{-t}dt < \infty,
$$
then there exists a unique sequence of numbers $\{f_n\}$, such that
$$
f(t)=\sum_{n= 0}^\infty f_nL_n^\nu(t)
$$
in the $L^2(\R_+,d\mu_\nu)$ sense. Moreover, by identity \eqref{Laguerre_orthogonality}, we have
\begin{equation}\label{Laguerre_L2_norm}
\int_{0}^\infty |f(t)|^2t^{\nu}e^{-t}dt = \sum_{n= 0}^\infty |f_n|^2 \frac{\Gamma(n+\nu+1)}{n!}.
\end{equation}
All these properties can be found in \cite[Chapter 5]{Sz}, while Fej\'er's formula \eqref{laguerre-asymp} is contained in \cite[Theorem 8.22.1]{Sz}.

For the remainder of this section, let $\nu=d/2-1$, where $d$ denotes the dimension. An important property about Laguerre polynomials is the following:  

\begin{lemma}
Let $f:\R^d\to\R$ be the radial function defined by $f(x)=L_n^{\nu}(2\pi|x|^2)e^{-\pi|x|^2}$. 
Then its Fourier transform, normalized as in \eqref{FT}, is given by
\begin{equation}\label{Laguerre-multiplier}
\widehat{f}(y) = (-1)^n L_n^{\nu}(2\pi|y|^2)e^{-\pi|y|^2}.
\end{equation}
\end{lemma}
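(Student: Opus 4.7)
The plan is to use the generating function \eqref{laguerre-gen-func} to package all the identities simultaneously into a single Gaussian identity, and then read off the result by equating coefficients in $t$.

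First I would fix $t\in(-1,1)$ and consider
\[
g_t(x):=\sum_{n=0}^\infty t^n L_n^{\nu}(2\pi|x|^2)\,e^{-\pi|x|^2},
\]
with $\nu=d/2-1$. Applying \eqref{laguerre-gen-func} with $x$ replaced by $2\pi|x|^2$ and using $\nu+1=d/2$ gives
\[
g_t(x)=\frac{1}{(1-t)^{d/2}}\exp\!\left(-\pi|x|^2\,\frac{1+t}{1-t}\right).
\]
The convergence in \eqref{laguerre-gen-func} is uniform on compact subsets of $(0,\infty)$, but the tail is dominated pointwise by a fixed Gaussian of the form $C_t\, e^{-c_t\pi|x|^2}$ with $c_t>0$; together with the classical bound $|L_n^\nu(x)e^{-x/2}|\lesssim n^{\nu/2}$ on compacts coming from \eqref{laguerre-asymp}, this yields convergence in $L^1(\R^d)$.

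Next I would take the Fourier transform of $g_t$ directly. Since $g_t$ is a radial Gaussian with parameter $\alpha=(1+t)/(1-t)>0$, the standard Gaussian Fourier transform under the normalization \eqref{FT} yields
\[
\widehat{g_t}(y)=\frac{1}{(1-t)^{d/2}}\left(\frac{1-t}{1+t}\right)^{d/2}\exp\!\left(-\pi|y|^2\,\frac{1-t}{1+t}\right)
=\frac{1}{(1+t)^{d/2}}\exp\!\left(-\pi|y|^2\,\frac{1-t}{1+t}\right).
\]
By $L^1$-convergence of the partial sums of $g_t$, their Fourier transforms converge uniformly, so
\[
\widehat{g_t}(y)=\sum_{n=0}^\infty t^n\,\widehat{f_n}(y),\qquad f_n(x):=L_n^{\nu}(2\pi|x|^2)e^{-\pi|x|^2}.
\]

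Finally I would rewrite $\widehat{g_t}$ as a generating function in $-t$. Substituting $-t$ in place of $t$ in \eqref{laguerre-gen-func} and repeating the first step gives exactly
\[
\sum_{n=0}^\infty(-t)^n f_n(y)=\frac{1}{(1+t)^{d/2}}\exp\!\left(-\pi|y|^2\,\frac{1-t}{1+t}\right),
\]
after simplifying the exponent $-\pi|y|^2+2\pi t|y|^2/(1+t)=-\pi|y|^2(1-t)/(1+t)$. Comparing the two $L^1(\R^d)$-valued power series $\sum t^n \widehat{f_n}$ and $\sum (-t)^n f_n$, which agree as functions of $t\in(-1,1)$, and invoking uniqueness of Taylor coefficients (apply at each fixed $y$), I would conclude that $\widehat{f_n}(y)=(-1)^n f_n(y)$, which is \eqref{Laguerre-multiplier}. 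The only mild technical point is the interchange of Fourier transform with the infinite sum, and this is handled by the Gaussian domination noted above; everything else reduces to bookkeeping with $\nu+1=d/2$.
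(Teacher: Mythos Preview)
Your generating-function approach is correct and genuinely different from the paper's. The paper instead writes the radial Fourier transform as a Hankel transform via \eqref{radial-fourier} and then quotes the tabulated integral \eqref{magical-identity} from Gradshteyn--Ryzhik, which directly evaluates $\int_0^\infty x^{\nu+1}e^{-\beta x^2}L_n^\nu(\alpha x^2)J_\nu(xy)\,dx$. Your argument trades that external identity for the elementary Fourier transform of a Gaussian together with the generating function \eqref{laguerre-gen-func}, which is more self-contained and arguably more transparent: it explains structurally \emph{why} the eigenvalue is $(-1)^n$, namely because the map $t\mapsto -t$ on the generating parameter corresponds exactly to the Fourier transform of the underlying Gaussian family.

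One technical point deserves tightening. Your justification of $L^1(\R^d)$-convergence of the partial sums is sketchy: the Fej\'er asymptotic \eqref{laguerre-asymp} is only stated on compact subsets of $(0,\infty)$, and ``Gaussian domination of the tail'' does not obviously give a bound on the partial sums that is uniform in $N$. A cleaner route, using only what the paper provides, is to work in $L^2$. By \eqref{Laguerre_L2_norm} one has $\|f_n\|_{L^2(\R^d)}^2 = c_d\,\Gamma(n+\nu+1)/n!\sim c_d\,n^{\nu}$, so $\sum_n |t|^{2n}\|f_n\|_{L^2}^2<\infty$ and the series for $g_t$ converges in $L^2$; the Fourier transform is an $L^2$-isometry, whence $\sum_n t^n\widehat{f_n}=\widehat{g_t}=g_{-t}=\sum_n(-t)^n f_n$ in $L^2$. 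Taking the inner product with the orthogonal family $\{f_m\}$ and comparing power-series coefficients in $t$ yields $\widehat{f_n}=(-1)^n f_n$ in $L^2$, hence pointwise since both sides are continuous. With this adjustment your proof is complete.
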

\begin{proof}
Identity \eqref{Laguerre-multiplier} can be deduced as follows.  Firstly, if $f:\R^d\to\R$ is a radial function, then $\widehat{f}$ is also radial, and using \eqref{innerproduct_int} together with \eqref{alt_Bessel}, we obtain
\begin{equation}\label{radial-fourier}
s^{\nu}\widehat{f}(s) =2\pi \int_0^\infty r^\nu f(r)J_\nu(2\pi rs)rdr, 
\end{equation}
for every $s>0$. Secondly, the identity in \cite[7.421--4, p.~812]{GR} states that
\begin{equation}\label{magical-identity}
\int_0^\infty x^{\nu+1}e^{-\beta x^2}L^\nu_n(\alpha x^2)J_\nu(xy)dx=2^{-\nu-1}\beta^{-\nu-n-1}(\beta-\alpha)^n y^\nu e^{-\frac{y^2}{4\beta}}L_n^\nu\bigg[\frac{\alpha y^2}{4\beta(\alpha-\beta)}\bigg],
\end{equation}
for every $\alpha\in\R$, $\beta>0$ and $y\in\R$. Choosing the appropriate values of $\alpha$ and $\beta$, one can easily deduce identity \eqref{Laguerre-multiplier} from \eqref{radial-fourier} and \eqref{magical-identity}.
\end{proof}
Using the orthogonality relation \eqref{Laguerre_orthogonality}, together with a suitable change of variables, one deduces that any radial, square-integrable function $f:\R^d \to \R$  can be uniquely expanded as
$$
f(x)= \sum_{n= 0}^\infty f_n L_n^{\nu}(2\pi|x|^2)e^{-\pi|x|^2},
$$
where the convergence holds in the $L^2(\R^d)$ sense. 
To conclude, let us mention that Laguerre polynomials are related to Hermite polynomials from \S \ref{ssec:Hermite} in the following way:
\begin{equation*}
H_{2m}(x)=(-1)^m2^{2m} m!L_m^{-1/2}(x^2) \ \ \ \ \text{and} \ \ \ \ H_{2m+1}(x)=(-1)^m2^{2m+1} m!xL_m^{1/2}(x^2).
\end{equation*}

\subsection{One-dimensional rearrangements}
Our discussion starts with the well-known {\it layer cake representation} \cite[\S 1.13]{LL}. Every nonnegative measurable function $f:\R\to\R$ can be written as an integral of the characteristic function of its superlevel sets,
\begin{equation}\label{LayerCake}
f(x)=\int_0^\infty \chi_{\{f>t\}}(x)dt.
\end{equation}
This formula alone already allow us to establish the following elementary inequality of rearrangement flavor which will  be important in applications.
\begin{lemma}\label{Rearr1}
Let $a<b$ and let $f,g:[a,b]\to\R$ be nonnegative, measurable, bounded functions. 
Further assume that $\|f\|_{L^\infty}\leq 1$. 
If  $g$ is nonincreasing, then
$$\int_{b-\|f\|_{L^1}}^bg(x)dx\leq \int_a^b f(x)g(x)dx\leq\int_a^{a+\|f\|_{L^1}} g(x)dx,$$
whereas the reverse inequalities hold if $g$ is nondecreasing.
\end{lemma}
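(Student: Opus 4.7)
The plan is to prove the upper bound $\int_a^b fg\,dx \le \int_a^{a+\|f\|_{L^1}} g\,dx$ (for nonincreasing $g$) by a direct pointwise comparison between $f$ and the indicator of a leftmost interval of the correct mass. All three remaining inequalities follow by symmetry.

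Set $s:=\|f\|_{L^1}$ and $\phi:=f-\chi_{[a,a+s]}$. Two features of $\phi$ drive the argument. First, by choice of $s$ one has $\int_a^b \phi\,dx=0$. Second, the sign of $\phi$ is completely controlled: the assumption $\|f\|_{L^\infty}\le 1$ forces $\phi\le 0$ on $[a,a+s]$, while $\phi\ge 0$ on $(a+s,b]$ because $f$ is nonnegative and $\chi_{[a,a+s]}$ vanishes there.

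Since $g$ is nonincreasing, one can choose a constant $c$ that sandwiches $g$ at the point $a+s$, i.e.\ $g(x)\ge c$ for $x\le a+s$ and $g(x)\le c$ for $x>a+s$ (for instance $c:=\lim_{x\to (a+s)^+} g(x)$). Then $\phi$ and $g-c$ have opposite signs at every point of $[a,b]$, so $\phi(g-c)\le 0$ pointwise. Integrating and using $\int_a^b \phi\,dx=0$, one gets
$$\int_a^b f(x)g(x)\,dx - \int_a^{a+s} g(x)\,dx = \int_a^b \phi(x) g(x)\,dx \le c\int_a^b\phi(x)\,dx = 0,$$
which is the desired upper bound.

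The lower bound is proved by the same argument with $\chi_{[b-s,b]}$ in place of $\chi_{[a,a+s]}$; the new $\phi$ now satisfies $\phi\ge 0$ on $[a,b-s)$ and $\phi\le 0$ on $[b-s,b]$, and since $g$ is nonincreasing the signs of $\phi$ and $g-c'$ (for the appropriate $c'$ at $b-s$) again line up to yield $\phi(g-c')\ge 0$. The case of nondecreasing $g$ follows by applying the two inequalities just established to the reflected data $\tilde f(x):=f(a+b-x)$ and $\tilde g(x):=g(a+b-x)$. There is no real obstacle beyond bookkeeping; the only subtle point is that the monotonicity of $g$ must be used to accommodate a possible jump at the splitting point, which the choice of $c$ as a one-sided limit handles automatically.
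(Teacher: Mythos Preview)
Your proof is correct and takes a genuinely different route from the paper's. The paper argues via the layer cake representation: writing $g(x)=\int_0^\infty \chi_{\{g>t\}}(x)\,dt$ and using that the superlevel sets of a nonincreasing $g$ are intervals $[a,a+\ell(t))$, Fubini gives $\int_a^b fg=\int_0^\infty\big(\int_a^{a+\ell(t)} f\big)\,dt$; the bound $\|f\|_{L^\infty}\le 1$ then forces the inner integral to be at most $\min\{\ell(t),\|f\|_{L^1}\}$, and a second application of layer cake recovers $\int_a^{a+\|f\|_{L^1}} g$. Your argument bypasses all of this by comparing $f$ directly against $\chi_{[a,a+s]}$ and exploiting the single pointwise inequality $\phi(g-c)\le 0$, which is strictly more elementary: no Fubini, no superlevel-set structure, and no need to redefine $g$ on a null set. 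The paper's approach has the advantage of fitting into the broader rearrangement framework used immediately afterward (Hardy--Littlewood and Lemma~\ref{Rearr2}), but for this lemma in isolation your method is shorter and cleaner.
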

\begin{proof}
We prove the upper bound under the assumption that $g$ is nonincreasing, all other cases being similar. By an appropriate change of variables, no generality is lost in assuming, as we will, that $[a,b]=[0,1]$.
Since $g$ is monotonic, it can have at most countably many discontinuities. In particular, one can redefine $g$ on a set of measure zero and assume that its superlevel sets $\{g>t\}=(0,\ell(t))$ are open intervals. By the layer cake representation and Fubini's theorem,
\begin{align*}
\int_0^1 fg
&=\int_0^1 f(x)\Big(\int_0^\infty \chi_{\{g>t\}}(x)dt\Big)dx\\
&=\int_0^\infty\Big(\int_0^1 f(x)\chi_{(0,\ell(t))}(x) dx\Big)dt\\
&=\int_0^\infty\Big(\int_0^{\ell(t)} f(x) dx\Big)dt.
\end{align*}
Since $\|f\|_{L^\infty}\leq 1$, the inner integral in this last expression is bounded by $\min\{\ell(t),\|f\|_{L^1}\}$. On the other hand,
$$\int_0^{\infty} \min\{\ell(t),\|f\|_{L^1}\}dt
=\int_0^{\infty} \Big(\int_0^{\|f\|_{L^1}} \chi_{(0,\ell(t))}(x)dx\Big)dt
=\int_0^{\|f\|_{L^1}}g(x)dx,$$
and the proof is complete.
\end{proof}
Let $A\subset \R$ be a measurable subset of the real line of  finite Lebesgue measure, $|A|<\infty$. The symmetric rearrangement of the set $A$, denoted $A^*$, is defined to be the open interval centered at the origin whose length equals $|A|$. We further define $\chi_A^*:=\chi_{A^*}$, and use formula \eqref{LayerCake} to extend this definition to generic nonnegative measurable functions. More precisely, the symmetric-decreasing rearrangement $f^*$ of a nonnegative measurable function $f:\R\to\R$ is defined as
$$f^*(x)=\int_0^\infty \chi^*_{\{f>t\}}(x)dt.$$
Thus $f^*$ is a lower semicontinuous function. The functions $f$ and $f^*$ are equimeasurable, i.e.,
$$|\{x\in\R: f(x)>t\}|=|\{x\in\R: f^*(x)>t\}|$$
for every $t>0$. In particular,
$$\|f\|_{L^p(\R)}=\|f^*\|_{L^p(\R)}$$
for all $1\leq p\leq \infty.$ Further note that symmetric-decreasing rearrangements are order preserving:
$$f\leq g \Rightarrow f^*\leq g^*.$$
This follows immediately from the fact that the inequality $f(x)\leq g(x)$ for all $x$ is equivalent to the statement that the superlevel sets of $g$ contain the superlevel sets of $f$.
One of the simplest rearrangement inequality for functions goes back to Hardy and Littlewood \cite[Theorem 378]{HLP} and can be informally phrased as follows. If $f,g$ are nonnegative functions on $\R$ which vanish at infinity, then
\begin{equation}\label{HL}
\int_{-\infty}^\infty f(x)g(x)dx\leq \int_{-\infty}^\infty f^*(x)g^*(x)dx,
\end{equation}
with the understanding that when the left-hand side is infinite so is the right-hand side.
This can be used in conjunction with the previous lemma to establish the following simple but useful result where, in contrast to Lemma \ref{Rearr1}, no monotonicity assumption is imposed on the function $g$.
\begin{lemma}\label{Rearr2}
Let $a<b$ and let $f,g:[a,b]\to\R$ be nonnegative, measurable, bounded functions. Further assume that $\|f\|_{L^\infty}\leq 1$. Then
$$\inf_{|J|=\|f\|_{L^1}} \int_J g\leq\int_{[a,b]} fg\leq \sup_{|J|=\|f\|_{L^1}}\int_J g$$
where infimum and supremum are taken over all measurable subsets of $[a,b]$ with measure $\|f\|_{L^1}$.
 \end{lemma}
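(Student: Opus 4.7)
The plan is to derive both inequalities from the Hardy--Littlewood inequality \eqref{HL} combined with Lemma \ref{Rearr1}, plus a layer cake identification of the supremum. Set $L:=\|f\|_{L^1}$ and note that necessarily $L\leq b-a$. Extend $f$ and $g$ by zero off $[a,b]$; the symmetric-decreasing rearrangements $f^*$ and $g^*$ are then supported in $[-(b-a)/2,(b-a)/2]$. By \eqref{HL},
$$\int_{[a,b]}fg \leq \int_{\R} f^*g^* = 2\int_0^{(b-a)/2} f^*(x)g^*(x)\,dx.$$
On $[0,(b-a)/2]$, the function $f^*$ is nonnegative with $\|f^*\|_{L^\infty}\leq 1$ and $\|f^*\|_{L^1([0,(b-a)/2])}=L/2$, while $g^*$ is nonincreasing. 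Lemma \ref{Rearr1} therefore yields
$$\int_0^{(b-a)/2} f^*g^*\,dx \leq \int_0^{L/2} g^*(x)\,dx,$$
so that $\int_{[a,b]} fg \leq \int_{-L/2}^{L/2} g^*(x)\,dx$.

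The next step is to identify $\int_{-L/2}^{L/2} g^*\,dx$ with $\sup_{|J|=L,\,J\subset[a,b]}\int_J g$. Using the layer cake representation \eqref{LayerCake} for $g^*$ and observing that each superlevel set $\{g^*>t\}$ is a symmetric interval of measure $|\{g>t\}|$,
$$\int_{-L/2}^{L/2} g^*(x)\,dx = \int_0^\infty \bigl|\{g^*>t\}\cap(-L/2,L/2)\bigr|\,dt = \int_0^\infty \min\bigl(|\{g>t\}|,L\bigr)\,dt.$$
On the other hand, for any measurable $J\subset[a,b]$ with $|J|=L$, the layer cake and the trivial bound $|J\cap\{g>t\}|\leq\min(|J|,|\{g>t\}|)$ give
$$\int_J g = \int_0^\infty |J\cap\{g>t\}|\,dt \leq \int_0^\infty \min(L,|\{g>t\}|)\,dt,$$
with equality attained by choosing $J$ to be a maximal superlevel set of $g$ of measure at most $L$, padded by an appropriate portion of the corresponding level set to reach measure exactly $L$. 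This establishes $\sup_{|J|=L}\int_J g = \int_{-L/2}^{L/2} g^*\,dx$, and the upper bound of the lemma follows.

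For the lower bound, I apply the already-proved upper bound to the nonnegative function $1-f$, which also satisfies $\|1-f\|_{L^\infty}\leq 1$ and has $\|1-f\|_{L^1}=(b-a)-L$:
$$\int_{[a,b]}(1-f)g \leq \sup_{|K|=(b-a)-L}\int_K g.$$
Rearranging and noting that $K\mapsto [a,b]\setminus K$ is a bijection between measurable subsets of $[a,b]$ of measure $(b-a)-L$ and those of measure $L$, I obtain
$$\int_{[a,b]} fg = \int_{[a,b]} g - \int_{[a,b]}(1-f)g \geq \int_{[a,b]} g - \sup_{|K|=(b-a)-L}\int_K g = \inf_{|J|=L,\,J\subset[a,b]}\int_J g,$$
which is the desired lower bound.

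The only subtlety I expect is the careful verification that the supremum over sets $J$ of fixed measure $L$ coincides with the integral of $g^*$ on $(-L/2,L/2)$; this requires the padding argument when $t\mapsto|\{g>t\}|$ has a jump, but is otherwise routine. Everything else is a direct consequence of \eqref{HL}, Lemma \ref{Rearr1}, and the bijection between complementary subsets of $[a,b]$.
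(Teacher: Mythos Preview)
Your proof is correct and follows essentially the same route as the paper's: Hardy--Littlewood \eqref{HL} together with Lemma \ref{Rearr1} to reduce to $\int_{-L/2}^{L/2} g^*$, a layer-cake identification of this with $\sup_{|J|=L}\int_J g$, and then the substitution $f\mapsto 1-f$ for the lower bound. You spell out the identification step (the $\min$ formula and the padding argument) in more detail than the paper does, but the structure is identical.
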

\begin{proof}
We start by establishing the upper bound, and set $\theta:=\|f\|_{L^1}$. 
Again assume that $[a,b]=[0,1]$.
Using Hardy-Littlewood's inequality \eqref{HL} and Lemma \ref{Rearr1}, we have that
$$\int_{0}^{1} fg
\leq \int_{-\frac{1}{2}}^{\frac{1}{2}} f^* g^*
= 2\int_{0}^{\frac{1}{2}} f^* g^*
\leq 2\int_0^{\frac{\theta}{2}} g^*(x)dx
=\int_{-\frac{\theta}{2}}^{\frac{\theta}{2}} g^*(x)dx.$$
The layer cake representation and the equimeasurability of $g$ and $g^*$ then imply that 
$$\int_{-\frac{\theta}{2}}^{\frac{\theta}{2}} g^*(x)dx=\int_J g,$$
where $J$ is any measurable subset of $\{g>g^*(\theta/2)\}$ 
satisfying $|J|=\theta$ and such that $J\supseteq \{g>\lambda\}$ for every $\lambda>g^*(\theta/2)$. The result follows.
For the lower bound, one repeats the argument with the function $1-f$ instead of $f$.
\end{proof}

\section{Preliminary reductions}\label{sec:Reductions}

Theorems \ref{1Dthm} and \ref{Dthm} are phrased in terms of nonzero, radial, real-valued, integrable functions $f:\R^d\to\R$ with an integrable Fourier transform $\widehat{f}$  such that $f(0)\leq 0$ and $\widehat{f}(0)\leq 0$.
The purpose of this chapter is to describe several arguments from \cite{BCK} which reduce the problem to a more tractable class of functions.

\subsection{A trivial reduction}
We lose no generality in assuming, as we will, that the function $f$ is normalized in $L^1$:
$$\|f\|_{L^1(\R^d)}=1.$$

\subsection{Reduction to radial functions}
In the one-dimensional situation, a function is radial if and only if it is even. In higher dimensions, it turns out that one can still restrict attention to radial functions. 
To see why this is the case, start by defining $f^\sharp(x)$ to be the invariant integral of $f$ over the sphere of radius $|x|$:
$$f^\sharp(x)
:=\frac 1 {\omega_{d-1}}\int_{\mathbb{S}^{d-1}} f(|x|v)d\sigma(v).$$
This defines a radial function which satisfies  $\widehat{(f^\sharp)}=(\widehat{f})^\sharp$. 
To check this claim,
let $\mu$ be the normalized Haar measure on the compact rotation group $SO(d)$, consisting of $d\times d$ orthogonal matrices of determinant 1. Since $\mu(SO(d))=1$ and the spherical measure $\sigma$ is invariant under the action of $SO(d)$, Fubini's theorem and a change of variables imply that
\begin{align*}
f^\sharp(x)
&=\frac 1 {\omega_{d-1}}\int_{SO(d)}\Big(\int_{\mathbb{S}^{d-1}} f(|x|v)d\sigma(v)\Big) d\mu(\rho)\\
&=\frac 1 {\omega_{d-1}}\int_{\mathbb{S}^{d-1}}\Big(\int_{SO(d)} (f\circ \rho)(|x|v)d\mu(\rho)\Big)d\sigma(v)\\
&=\int_{SO(d)} (f\circ \rho)(x)d\mu(\rho).
\end{align*}
For any rotation $\rho\in SO(d)$, $\widehat{f\circ \rho}=\widehat{f}\circ \rho$. The claim follows, for then
$$\widehat{(f^\sharp)}(y)=\int_{SO(d)} \widehat{f\circ \rho}(y)d\mu(\rho)=\int_{SO(d)} (\widehat{f}\circ\rho)(y)d\mu(\rho)=(\widehat{f})^\sharp(y).$$
Moreover, it is not difficult to see that the functions $f^\sharp$ and $\widehat{f}^\sharp$ are not identically zero as long as $A(f)<\infty$ and $A(\widehat{f})<\infty$. 
{By considering the set $\{|x| > A(f)\}$, one sees that the only way for $f^\sharp$ to vanish identically in that set is if $f$ is compactly supported.
Then Schwartz's Paley-Wiener theorem \cite{Sch} implies that the function $\widehat{f}$ is analytic provided $A(f)<\infty$. But $f^\sharp=0$ also implies that $(\widehat{f})^\sharp=\widehat{(f^\sharp)}=0$, and so
$$\textrm{supp}(\widehat{f})\subset \{|y|\leq A(\widehat{f})\}$$ 
which contradicts the analyticity of $\widehat{f}$ unless $A(\widehat{f})=\infty$.}
Finally, one observes that $A(f^\sharp)\leq A(f)$ and $A(\widehat{f}^\sharp)\leq A(\widehat{f})$. 
It follows that one can restrict attention to radial functions, as claimed.

\subsection{Reduction to $f = \widehat{f}$} We lose no generality in assuming that
$$A(f) =A(\widehat{f}),$$
for otherwise we can apply a dilation $f(x) \mapsto f(x/\lambda)$ for some $\lambda > 0$. In the one-dimensional situation, this acts on the Fourier side as $\widehat{f}(y) \mapsto \lambda \widehat{f}( \lambda y)$, and
therefore does not change the product of these two quantities. However, once these two terms coincide, we can define
$$g := f + \widehat{f},$$
and it is easy to see that $A(g) \leq A(f)$.
Since $\widehat{g} = g$, it thus suffices to consider functions which equal their Fourier transform. 
In higher dimensions, we first appeal to the reduction to radial functions established above, and then the same dilation argument applies.

\subsection{Reduction to $f(0) = 0$} Following the reasoning above, suppose that $\widehat{f} = f$. Since $\widehat {e^{-\pi |\cdot|^2}} = e^{-\pi |\cdot|^2}$ in all dimensions, we can instead consider the function
$$ g := f - f(0) e^{-\pi |\cdot|^2}$$
 whenever $f(0) < 0$.
Clearly, the function $g$ coincides with its Fourier transform, satisfies $g(0)=0$, and furthermore
$$A(g)<A(f)$$
because the Gaussian always takes positive values.\\

\subsection{Square-integrability} 
Since $f$ is radial, and assuming as we may that $f=\widehat{f}$, we see that 
$$f(x)=\int_{\R^d}{f(y) \cos{(2\pi x\cdot y )}  dy}, \quad \mbox{and thus} \quad |f(x)|  \leq   \|f\|_{L^1(\mathbb{R}^d)}.$$
Taking the supremum in $x$ yields
$$ \|f\|_{L^\infty(\R^d)} \leq \|f\|_{L^1(\mathbb{R}^d)},$$
and therefore
$$ \|f\|_{L^2(\mathbb{R}^d)}
\leq \|f\|_{L^\infty(\R^d)}^{1/2} \| f\|_{L^1(\mathbb{R}^d)}^{1/2} 
\leq \|f\|_{L^1(\mathbb{R}^d)}
< \infty.$$
Therefore, we lose no generality in assuming that $f$ is square-integrable.  Note that, for the type of functions we are interested in, the $L^1$ and $L^2$ norms will always be comparable. For instance, if $d=1$, then
$$ \frac{\|f\|_{L^1(\mathbb{R})}}{2} \leq  \int_{-A(f)}^{A(f)}{|f(x)|dx} \leq \sqrt{2 A(f)}\left(\int_{-A(f)}^{A(f)}{|f(x)|^2dx}\right)^{\frac{1}{2}} \leq \sqrt{2 A(f)}\|f\|_{L^2(\mathbb{R})},$$
and we care about functions $f$ for which $A(f)$ is as small as possible.
}

\subsection{An easy lower bound}
The previous reductions allow us to restrict attention to functions $f:\R^d\to\R$ which satisfy the following set of assumptions.
\begin{align}
&f\in L^1(\R^d)\cap L^2(\R^d): \|f\|_{L^1(\R^d)}=1,\label{nonzero_integrable}\\
&f \textrm{ is real-valued,}\label{even_Rvalued}\\
&f(0)=0,\label{zero_at_zero}\\
&f=\widehat{f},\label{Fourier}\\
& f\textrm{ is radial.}\label{Radial}
\end{align}
Observe that functions $f$ which satisfy  assumptions \eqref{nonzero_integrable} and \eqref{Fourier} are uniformly continuous and bounded with $\|f\|_{L^\infty}\leq 1$. Moreover, in view of the Riemann-Lebesgue lemma,
$$\lim_{|x|\rightarrow\infty} |f(x)|=0.$$
Functions satisfying \eqref{Fourier} cannot be compactly supported unless they are identically zero. Moreover, assumptions \eqref{zero_at_zero} and \eqref{Fourier} imply  
$$\int_{\R^d} f(x)dx=\widehat{f}(0)=0.$$
The following simple argument from \cite{BCK} establishes \textit{some}
 lower bound for $A(f)$.

 \begin{lemma}\label{trivialLB}
Let $f:\R\to\R$ be a function satisfying assumptions   \eqref{nonzero_integrable}--\eqref{Fourier}. Then
$$A(f) \geq \frac{1}{4}.$$
\end{lemma}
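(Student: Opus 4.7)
The plan is to combine three consequences of the standing assumptions: (i) $\|f\|_{L^\infty} \le \|f\|_{L^1} = 1$, which follows from $f = \widehat f$ together with the normalization in \eqref{nonzero_integrable}; (ii) $\int_\R f(x)\,dx = \widehat f(0) = f(0) = 0$, which follows from \eqref{zero_at_zero} and \eqref{Fourier}; (iii) $f(x) \ge 0$ for $|x| > A(f)$, which is the definition of $A(f)$.

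Set $A := A(f)$ for brevity. The key idea is to exploit the cancellation forced by $\int f = 0$ against the sign condition outside $[-A,A]$. Splitting the $L^1$ norm,
\begin{equation*}
1 = \int_{|x|\le A} |f(x)|\,dx + \int_{|x|>A} |f(x)|\,dx = \int_{|x|\le A} |f(x)|\,dx + \int_{|x|>A} f(x)\,dx,
\end{equation*}
where the last equality uses (iii). Using $\int_\R f = 0$ from (ii), the second term equals $-\int_{|x|\le A} f(x)\,dx$, so
\begin{equation*}
1 = \int_{|x|\le A} \bigl(|f(x)| - f(x)\bigr)\,dx \le 2 \int_{|x|\le A} |f(x)|\,dx.
\end{equation*}

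Finally I would bound the remaining integral by $2A \cdot \|f\|_{L^\infty} \le 2A$, using (i). Chaining the inequalities yields $1 \le 4A$, which is the desired bound $A(f) \ge 1/4$. There is no real obstacle here: the argument is a three-line manipulation, and the only subtlety worth flagging is that all three ingredients (i)--(iii) are genuinely needed — dropping any one of them allows trivial counterexamples (constants, Gaussians, or indicator-like bumps centered at the origin).
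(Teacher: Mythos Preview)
Your proof is correct and follows essentially the same approach as the paper: both use $\int f = 0$ and $\|f\|_{L^1}=1$ to force $\int f^- = 1/2$, then combine $\|f\|_{L^\infty}\le 1$ with $\mathrm{supp}(f^-)\subseteq[-A,A]$ to conclude $1/2\le 2A$. The paper phrases the last step as a measure bound on $\{f<0\}$ while you integrate over $[-A,A]$ directly, but the content is the same.
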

\begin{proof}
Since $\|f\|_{L^1}=1$ and $f$ has zero average, 
it follows that
\begin{equation}\label{L1PosNeg}
\int_{\{f> 0\}} f^+(x)dx=\int_{\{f<0\}} f^-(x)dx=\frac1 2,
\end{equation}
where $f^+$ and $f^-$ denote the positive and negative part of the function $f$, respectively. Consequently,
$$ \frac{1}{2} = \int_{\{f< 0\}}{f^-(x)dx}= \int_{\{f< 0\}}{|f(x)| dx} \leq \int_{\{f < 0\}}{1~dx} = |\left\{x\in\R:f(x) < 0\right\}|.$$
By definition of $A(f)$, we have $\{f < 0\}\subseteq [-A(f),A(f)]$, and this implies the desired bound.
\end{proof}

 {\it Remark.} This argument carried out in higher dimensions leads to the lower bound given by Theorem \ref{BCKhigherD}.

\section{Proof of Theorem \ref{1Dthm}}\label{sec:1DThm}
In this chapter, we prove Theorem \ref{1Dthm}. We first establish the lower bound $A(f) \geq 0.45$. 
With some additional work, our argument can be refined to  yield $A(f) \geq 0.453$. 
However,  we do not believe that  lower bound to be close to best possible, and so we opted for clarity of exposition over a sharper form. The upper bound $\inf_f A(f)\leq 0.594$ follows from an explicit construction described in \S \ref{sec:1DUB} below.

\subsection{Proof of the lower bound} 
Let $f:\R\to\R$ be a function satisfying assumptions \eqref{nonzero_integrable}$-$\eqref{Fourier}, which throughout this section we simply refer to as  an {\it admissible function}. 
Since $f$ is an even function, it is enough to study  its behavior on the positive half-line. The argument is based on understanding the size of the quantity
$$\int_{0}^{A}{f(x)dx}$$
for $A:=A(f)$.
 This integral accounts for half of the negative mass, which equals $-1/4$ since $\|f\|_{L^1}=1$ and $\int f=0$, but might also contain some of
the positive mass. We will derive a pointwise upper bound for the function 
$f$ which places fairly strong restrictions on its positive part $f^+$ inside the interval $[0, A]$. 
As a consequence,
$$\mbox{if}\;\tau:= \int_{0}^{A}{f^{+}(x)dx} \; \mbox{were large, then} \; |\left\{x \in [0,A]: f(x) > 0\right\}|\; \mbox{would have to be large.}$$
On the other hand, from $\|f\|_{L^\infty} \leq 1$ one infers that
$$ |\left\{x \in [0,A]: f(x) \leq 0\right\}| \geq \frac{1}{4}, \; \mbox{and this implies} \;   |\left\{x \in [0,A]: f(x) > 0\right\}| \leq A-1/4.$$
We will use this to show that if $A < 0.45$, then
\begin{equation}\label{tauUB}
\tau < \frac{13}{500}.
\end{equation}
The final ingredient is an explicit integral identity derived from $f=\widehat{f}$ which will be used to perform a bootstrap-type argument that yields a contradiction. We now turn to the details.

\begin{lemma}\label{PointwiseUB}
Let $f$ be an admissible function, and set $A=A(f)$.
 If $A \leq 1/2$, then for all $0 \leq x \leq A$
\begin{equation}\label{PUBeq}
 f(x) \leq \frac{1}{2} + \frac{\sin{(2\pi(A-1/4) x)} - \sin{(2 \pi A  x)}}{\pi x}.
 \end{equation}
\end{lemma}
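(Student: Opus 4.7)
Since $f$ is real-valued and even with $f=\widehat{f}$, we have the representation
$$f(x)=2\int_0^{\infty}f(y)\cos(2\pi xy)\,dy.$$
The plan is to bound the contributions from $[0,A]$ and $[A,\infty)$ separately, then show that the dependence on the unknown positive mass $\tau:=\int_0^A f^+(y)\,dy$ drops out. The crucial structural fact is that for $x,y\in[0,A]$ with $A\leq 1/2$ we have $2\pi xy\leq \pi/2$, so $y\mapsto \cos(2\pi xy)$ is nonnegative and nonincreasing on $[0,A]$ for each fixed $x\in[0,A]$.

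\textbf{Contribution from $[A,\infty)$.} By definition of $A=A(f)$, the function $f$ is nonnegative on $[A,\infty)$, so the crude bound $\cos(2\pi xy)\leq 1$ gives
$$2\int_A^{\infty}f(y)\cos(2\pi xy)\,dy\leq 2\int_A^{\infty}f(y)\,dy.$$
Since $\widehat{f}(0)=0$ forces $\int_{-\infty}^{\infty}f=0$ and $f$ is even, one has $\int_0^{\infty}f=0$, hence $\int_A^{\infty}f=-\int_0^A f=\tfrac14-\tau$ (using $\int_0^A f^-=\tfrac14$, which follows from evenness and $\int_{\{f<0\}}|f|=\tfrac12$). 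The contribution from $[A,\infty)$ is thus at most $\tfrac12-2\tau$.

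\textbf{Contribution from $[0,A]$.} Write $f=f^+-f^-$ on $[0,A]$ and apply Lemma \ref{Rearr1} with $g(y)=\cos(2\pi xy)$ (nonincreasing on $[0,A]$). Using $\|f^\pm\|_{L^\infty}\leq 1$ together with $\|f^+\chi_{[0,A]}\|_{L^1}=\tau$ and $\|f^-\chi_{[0,A]}\|_{L^1}=\tfrac14$, the lemma yields
$$\int_0^A f^+(y)\cos(2\pi xy)\,dy\leq \int_0^{\tau}\cos(2\pi xy)\,dy=\frac{\sin(2\pi x\tau)}{2\pi x}$$
and
$$\int_0^A f^-(y)\cos(2\pi xy)\,dy\geq \int_{A-1/4}^{A}\cos(2\pi xy)\,dy=\frac{\sin(2\pi xA)-\sin(2\pi x(A-\tfrac14))}{2\pi x}.$$
Doubling and subtracting gives the bound for the contribution of $[0,A]$.

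\textbf{Combining and eliminating $\tau$.} Putting everything together,
$$f(x)\leq \frac{\sin(2\pi x\tau)}{\pi x}-\frac{\sin(2\pi xA)-\sin(2\pi x(A-\tfrac14))}{\pi x}+\frac12-2\tau.$$
The desired inequality then follows from the elementary bound $\sin u\leq u$ for $u\geq 0$ applied to $u=2\pi x\tau$, which gives $\frac{\sin(2\pi x\tau)}{\pi x}-2\tau\leq 0$ and so cancels the $\tau$-dependent terms. I expect no serious obstacle here; the only point requiring mild care is the bookkeeping of the positive and negative masses (using $\int f=0$ and $\|f\|_{L^1}=1$ together with evenness) to get $\int_0^A f^-=\tfrac14$ and $\int_A^\infty f=\tfrac14-\tau$, and the verification that $A\leq 1/2$ is exactly what is needed to guarantee nonnegativity and monotonicity of $\cos(2\pi xy)$ on the relevant rectangle, which is the hypothesis of Lemma \ref{Rearr1}.
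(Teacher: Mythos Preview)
Your argument is correct, and the key step --- the rearrangement lower bound on $\int f^-\cos$ via Lemma~\ref{Rearr1}, enabled by the monotonicity of $\cos(2\pi xy)$ on $[0,A]$ when $A\leq 1/2$ --- is exactly what the paper does. The difference is only in how you handle $f^+$. The paper bounds $\int_{-\infty}^{\infty} f^+(y)\cos(2\pi xy)\,dy \leq \int f^+ = 1/2$ in one line using $\cos\leq 1$ globally, with no need for the parameter $\tau$. You instead split by region, apply the sharper rearrangement upper bound to $f^+$ on $[0,A]$ (yielding $\sin(2\pi x\tau)/(\pi x)$), use $\cos\leq 1$ on $[A,\infty)$ (yielding $1/2-2\tau$), and then give back the gain via $\sin u\leq u$. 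After this last step your bound on the total $f^+$ contribution collapses to $1/2$, so the detour through $\tau$ buys nothing here; your route is equivalent to, but slightly longer than, the paper's.
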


\begin{proof}  Since $f=\widehat{f}$ and $f$ is even, we have that
\begin{align*} f(x) &= \int_{-\infty}^\infty{f(y) \cos{(2\pi x y)}  dy} \\
&= \int_{-\infty}^\infty{(f^+(y)-f^{-}(y)) \cos{(2\pi x y)} dy} \\
&\leq \frac{1}{2} - \int_{-\infty}^\infty{f^{-}(y) \cos{(2\pi x y)} dy},
\end{align*}
where in the last inequality we used the observation from \eqref{L1PosNeg} that $\|f^+\|_{L^1}=1/2$.
If $A \leq 1/2$, then the function $y\mapsto\cos(2 \pi x y)$ is nonnegative and monotonically decreasing on $[0,A]$  for every $0 \leq x \leq A$. Since $f^-$ is even and $\|f^{-}\|_{L^\infty} \leq 1$, it follows from
Lemma \ref{Rearr1} and an explicit computation that
$$ \int_{-\infty}^\infty{f^{-}(y) \cos{(2\pi x y)}  dy} \geq 2\int_{A-\frac{1}{4}}^{A}{\cos{(2\pi x y)} dy} =  -  \frac{\sin{(2\pi(A-1/4) x)} - \sin{(2 \pi A  x)}}{\pi x}.$$
\end{proof}

\noindent The pointwise upper bound given by Lemma \ref{PointwiseUB} can be used to establish the next ingredient.
\begin{lemma}\label{IntegralUB} 
Let $f$ be an admissible function, and set $A=A(f)$.
If $A \leq 1/2$, then
$$ \int_{0}^{A}{f^{+}(x)dx} \leq \int_{\frac{1}{4}}^{A}{\frac{1}{2} + \frac{\sin{(2\pi(A-1/4) x)} - \sin{(2 \pi A x)}}{\pi x} dx}$$
\end{lemma}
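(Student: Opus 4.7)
The strategy is to combine the pointwise upper bound from Lemma \ref{PointwiseUB} with a monotonicity/rearrangement argument that precisely localizes the integration to the interval $[1/4, A]$.

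I would begin by rewriting the target quantity as $\int_0^A f^+(x)\,dx = \int_E f(x)\,dx$, where $E := \{x \in [0, A] : f(x) > 0\}$. The same argument as in Lemma \ref{trivialLB}, applied to $f^-$ on $[0, A]$ (which has $L^1$ mass $1/4$ by evenness of $f$ and $L^\infty$ norm at most $1$), shows $|\{f < 0\} \cap [0, A]| \geq 1/4$, hence $|E| \leq A - 1/4$. Lemma \ref{PointwiseUB} gives $f(x) \leq g(x)$ on $[0, A]$, where $g$ denotes the right-hand side of \eqref{PUBeq}, so $\int_E f \leq \int_E g$.

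The crux is showing $\int_E g \leq \int_{1/4}^A g$, which I would deduce from the claim that $g$ is monotonically nondecreasing on $[0, A]$ with $g(0) = 0$. Writing $g(x) = \tfrac{1}{2} + \frac{\sin(Bx) - \sin(Cx)}{\pi x}$ with $B := 2\pi(A - 1/4)$ and $C := 2\pi A$, a direct differentiation gives $\pi x^2 g'(x) = p(Bx) - p(Cx)$, where $p(t) := t \cos t - \sin t$ satisfies $p'(t) = -t \sin t$. Since $p' < 0$ on $(0, \pi)$, the function $p$ is strictly decreasing there. For $x \in (0, A]$ with $A \leq 1/2$, both $Bx$ and $Cx$ lie in $(0, \pi/2] \subset (0, \pi)$, with $Bx < Cx$, giving $g'(x) > 0$. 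A short Taylor expansion at the origin yields $g(0) = \tfrac{1}{2} + (B - C)/\pi = 0$, so $g \geq 0$ throughout $[0, A]$.

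With monotonicity in hand, applying Lemma \ref{Rearr1} to the function $\chi_E$ (which satisfies $\|\chi_E\|_{L^\infty} \leq 1$ and $\|\chi_E\|_{L^1} = |E|$) against the nondecreasing $g$ on $[0, A]$ yields $\int_E g \leq \int_{A - |E|}^A g$. Since $g \geq 0$ on $[0, A]$ and $|E| \leq A - 1/4$ implies $A - |E| \geq 1/4$, enlarging the interval of integration gives $\int_{A - |E|}^A g \leq \int_{1/4}^A g$, which is the desired bound. The main obstacle is the monotonicity of $g$: without recognising that $g'$ telescopes through the auxiliary $p(t) = t \cos t - \sin t$, whose derivative $-t \sin t$ has constant sign on $(0, \pi)$, there is no obvious path to the right-endpoint rearrangement that aligns the estimate exactly with the integral over $[1/4, A]$.
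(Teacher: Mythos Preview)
Your proof is correct and follows essentially the same route as the paper's: bound the measure of $\{f>0\}\cap[0,A]$ by $A-1/4$, apply the pointwise bound from Lemma~\ref{PointwiseUB}, and use monotonicity of $g$ to push the integration to the rightmost subinterval $[1/4,A]$. Your explicit verification of the monotonicity via $p(t)=t\cos t-\sin t$ and $p'(t)=-t\sin t<0$ on $(0,\pi)$ is a welcome addition, as the paper simply asserts that $g$ is nondecreasing on $[0,A]$ without justification.
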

\begin{proof}
As observed before,
$|\left\{x\in [0, A]: f(x) > 0\right\}|\leq A-1/4.$
Therefore

 $$ \int_{0}^{A}{f^{+}(x)dx} = \sup_{J \subset [0,A] \atop |J| = A - 1/4}  \int_{J} f^{+}.$$
Since the pointwise upper bound given by Lemma \ref{PointwiseUB} is always nonnegative, inequality \eqref{PUBeq} remains valid if $f$ is replaced by $f^+=\max\{f,0\}$. Thus
  \begin{align*}
\sup_{J \subset [0,A] \atop |J| = A - 1/4}  \int_{J} f^{+}(x)dx
   &\leq \sup_{J \subset [0,A] \atop |J|=A - 1/4} \int_J \frac{1}{2} + \frac{\sin{(2\pi(A-1/4) x)} - \sin{(2\pi A x)}}{\pi x} dx\\
   &= \int_{\frac1 4}^A \frac{1}{2} + \frac{\sin{(2\pi(A-1/4) x)} - \sin{(2 \pi A  x)}}{\pi x} dx,
   \end{align*}
   where the last identity follows at once from noting that the function 
   $$x\mapsto \frac{1}{2} + \frac{\sin{(2\pi(A-1/4) x)} - \sin{(2 \pi A x)}}{\pi x}$$
   is nondecreasing on $[0,A]$.
\end{proof}

\noindent Lemma \ref{IntegralUB}  implies the announced upper bound \eqref{tauUB} for $\tau$. {A simple computation shows that the function
$$A \mapsto \int_{\frac{1}{4}}^{A}{\frac{1}{2} + \frac{\sin{(2\pi(A-1/4) x)} - \sin{(2 \pi A x)}}{\pi x} dx}$$
is monotonically increasing for $0.25 \leq A \leq 0.5$. In particular, if $A < 0.45$, then

\begin{equation}\label{UBtau}
\tau= \int_{0}^{A}{f^{+}(x)dx} 
\leq \int_{\frac{1}{4}}^{\frac{45}{100}}{\frac{1}{2} + \frac{\sin{(2\pi(\frac{45}{100}-1/4) x)} - \sin{(2 \pi\frac{45}{100} x)}}{\pi x} dx}
< \frac{13}{500}.
\end{equation}
We proceed to derive the relevant integral identity.
\begin{lemma}\label{IntId}
 Let $f$ be an admissible function, and set $A=A(f)$. Then
\begin{equation}\label{IntegralID}
 \int_{0}^{A}{f(x)dx} = \int_{-\infty}^\infty{f(y) \left(\frac{\sin{(2\pi A y)}}{2 \pi y} + \frac{13}{400} (8 \pi y^2 - 2)e^{-\pi y^2} \right)dy}.
 \end{equation}
\end{lemma}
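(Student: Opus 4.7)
The plan is to observe that the right-hand side of \eqref{IntegralID} differs from the natural Fubini identity $\int_0^A f(x)\,dx = \int_{\R} f(y)\,\tfrac{\sin(2\pi A y)}{2\pi y}\,dy$ by a term that vanishes identically on the class of admissible $f$. Accordingly, I would split the proof into two independent computations: establish the plain identity, and then show that the quantity $\int_{\R} f(y)\,(8\pi y^2 - 2)e^{-\pi y^2}\,dy$ is zero, so that any scalar multiple of it may be freely added.

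For the first step, since $f=\widehat{f}$, $f$ is real and even, and $f\in L^1$, I would write
\[
f(x) = \int_{-\infty}^\infty f(y)\cos(2\pi x y)\,dy,
\]
integrate both sides over $x\in[0,A]$, and invoke Fubini (justified because $|f(y)\cos(2\pi xy)|\leq |f(y)|$ is integrable on $\R\times[0,A]$). The inner integral $\int_0^A \cos(2\pi x y)\,dx$ evaluates explicitly to $\sin(2\pi A y)/(2\pi y)$, which delivers the main term.

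For the second step, the key observation is that $(8\pi y^2-2)e^{-\pi y^2}$ is, up to a nonzero scalar, the rescaled Hermite function $\psi_2$ from \S\ref{ssec:Hermite}: indeed $H_2(t)=4t^2-2$, so
\[
\psi_2(y) = \frac{2^{1/4}}{2\sqrt{2}}(8\pi y^2 - 2)e^{-\pi y^2}.
\]
Since $\widehat{\psi_2} = (-i)^2\psi_2 = -\psi_2$, setting $g(y):=(8\pi y^2-2)e^{-\pi y^2}$ one has $\widehat{g}=-g$. The multiplication formula $\int f\,\widehat{g} = \int \widehat{f}\,g$ (valid since $f\in L^1$ and $g$ is Schwartz, by Fubini applied to $f(x)g(y)e^{-2\pi ixy}$) together with $\widehat{f}=f$ yields $-\int fg = \int fg$, hence $\int fg = 0$. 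Adding $(13/400)$ times this vanishing integral to the identity from the first step produces \eqref{IntegralID}.

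There is no genuine obstacle: the proof reduces to Fubini plus a single Fourier self-duality. The only nonobvious point is the specific constant $13/400$, which is not forced by the identity itself but is a free parameter, tuned to match the upper bound $\tau<13/500$ from \eqref{UBtau} so as to feed into the bootstrap argument that will follow.
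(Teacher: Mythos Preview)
Your proof is correct and follows the same two-step structure as the paper: Fubini for the sinc term, then orthogonality to $\psi_2$ for the Hermite correction. The only difference is in how you justify the vanishing of $\int f\psi_2$: the paper invokes the $L^2$ Hermite expansion $f=\sum a_n\psi_{4n}$ and the orthogonality relation \eqref{HermiteOrthogonality}, whereas you use the multiplication formula $\int f\widehat{g}=\int\widehat{f}g$ together with $\widehat f=f$ and $\widehat g=-g$ directly. Your route is marginally cleaner in that it stays within $L^1$ and does not appeal to the square-integrability reduction, though both arguments are ultimately expressing the same fact that the $+1$ and $-1$ eigenspaces of the Fourier transform are orthogonal.
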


\noindent {\it Remark.} The factor $13/400$ in identity \eqref{IntegralID}  may seem peculiar. While the identity  remains valid if $13/400$ is replaced by any other real number,
this particular choice turns out to be essentially optimal with respect to subsequent arguments.
\begin{proof} 
The proof proceeds in two steps.
The first step starts similarly to the proof of Lemma \ref{PointwiseUB}, and via Fubini's theorem and an explicit integration yields
\begin{align*}
\int_{0}^{A}{f(x)dx} &= \int_{0}^{A}{\left( \int_{-\infty}^\infty{f(y) \cos{(2 \pi x y)} dy} \right)dx}\\
&= \int_{-\infty}^\infty{ f(y)  \left(\int_{0}^{A}{\cos{(2 \pi x y)} dx}\right)  dy} \\
&= \int_{-\infty}^\infty{f(y)\frac{\sin{(2\pi A y)}}{2 \pi y} dy}
\end{align*}
The second step uses the fact that a square-integrable function satisfying $f = \widehat{f}$ admits an Hermite expansion of the form \eqref{HermiteExpansion}, where only Hermite functions $\psi_{4n}$ whose degree is divisible by 4 appear with nonzero coefficients. Since Hermite functions are mutually orthogonal as quantified by \eqref{HermiteOrthogonality},
any  function $\psi_{4n}$ is orthogonal to $\psi_2(y)=2^{-5/4}(8\pi y^2-2)e^{-\pi y^2}$, and therefore so is $f$. 
\end{proof}

\begin{proof}[Proof of the lower bound $A(f) \geq 0.45$] 
As usual, let $f$ be an admissible function and set $A:=A(f)$. Also, recall the auxiliary function from Lemma \ref{IntId} which we now denote by

$$\Upsilon_A(x):=\frac{\sin{(2\pi A x)}}{2 \pi x} + \frac{13}{400} (8 \pi x^2 - 2)e^{-\pi x^2}.$$
By definition of $\tau$ and identity \eqref{IntegralID}, we have that
\begin{align}
-\frac{1}{4} + \tau =  \int_{0}^{A}{f(x)dx} &=  \int_{-\infty}^\infty{(f^{+}(y) - f^{-}(y))\Upsilon_A(y)dy} \notag\\
 &\geq 
\inf_{I_1 \subset [-A,A] \atop |I_1| = 2\tau} \int_{I_1}{\Upsilon_A}
 +  \inf_{I_2 \subset \mathbb{R} \setminus[-A,A] \atop |I_2| = 1/2-2\tau} \int_{I_2}{\Upsilon_A}
 - \sup_{I_3 \subset [-A,A] \atop |I_3| = 1/2} \int_{I_3}{\Upsilon_A} ,\label{ineq}
\end{align}
where the inequality results from successive applications of Lemma \ref{Rearr2}.
In greater detail: the first and the second summands on the right-hand side of \eqref{ineq} arise as lower bounds given by Lemma \ref{Rearr2} applied to the function $f^+$ on  $[-A,A]$ and $\R\setminus [-A,A]$, respectively. 
The third summand arises as (the negative of) the upper bound given by Lemma \ref{Rearr2}  applied to the function $f^-$ on $[-A,A]$.

The rest of the proof proceeds by contradiction.
From \eqref{UBtau} we know that $A < 0.45$ implies $0 \leq \tau < 13/500$, and so the result will follow once we show that  inequality \eqref{ineq} fails  for every $\tau$ in this range.
To establish this fact, it suffices to establish failure at the endpoint $\tau = 13/500$. To see why this is the case, start by noting that the third summand on the right-hand side of inequality \eqref{ineq} does not depend on the parameter $\tau$. 
It suffices to study the functions
\begin{equation}\label{h1h2}
 h_1(\tau) :=\inf_{I_1 \subset [-A,A] \atop |I_1| = 2\tau} \int_{I_1}{\Upsilon_A}
 \;\;\;\textrm{ and }\;\;\;
 h_2(\tau) :=  \inf_{I_2 \subset \mathbb{R} \setminus[-A,A] \atop |I_2| = 1/2-2\tau} \int_{I_2}{\Upsilon_A}.
\end{equation}
The plan is the following: if inequality \eqref{ineq} holds for some $\tau_0>0$, then we show that it also holds for every larger $\tau > \tau_0$. 
This in turn follows from the fact that, on the interval $\tau \in [0,13/500)$,
\begin{equation}\label{LipGoal}
h:= h_1 + h_2 \; \mbox{is a Lipschitz function of $\tau$ with Lipschitz constant Lip}(h) < 1.
\end{equation}
An explicit computation shows that
inequality \eqref{ineq} fails at the endpoint $\tau = 13/500$ for any $A<0.45$, and this yields the desired contradiction. 
It remains to prove assertion \eqref{LipGoal}.
We start by noting an alternative representation for the functions $h_1, h_2$ which is based on identifying the optimal sets in the expressions \eqref{h1h2}. 
The infimum is actually a minimum, and the optimal set $I_1^*=I_1^*(\tau,A)$ for $h_1$ is given by
\begin{equation}\label{I1star}
 I_1^* := \left\{x \in [-A,A]:  \Upsilon_A(x) \leq c_1\right\},
 \end{equation}
where the parameter $c_1=c_1(\tau,A)$ is uniquely determined by
$$ c_1 =  \inf \left\{y \in \mathbb{R}: |\left\{x \in [-A,A] : \Upsilon_A(x) \leq y\right\}| \geq 2\tau \right\}.$$
 In a similar way, the optimal set $I_2^*=I_2^*(\tau,A)$ for the function $h_2$
is given by
\begin{equation}\label{I2star}
I_2^* := \left\{x \in \mathbb{R} \setminus [-A,A]:  \Upsilon_A(x) \leq c_2\right\},
 \end{equation}
where 
$$c_2=\inf \left\{y \in \mathbb{R}: |\left\{x \in \mathbb{R} \setminus [-A,A]: \Upsilon_A(x) \leq y\right\}| \geq \frac{1}{2} - 2\tau \right\}.$$
In other words,
\begin{equation}\label{h1Rep}
 h_1(\tau) = \int_{I_1^*}{\Upsilon_A}\;\textrm{ and } \;h_2(\tau) = \int_{I_2^*}{\Upsilon_A},
 \end{equation}
 where the sets $I_1^*$ and $I_2^*$ are respectively given by \eqref{I1star} and \eqref{I2star}; see also Figure \ref{fig:intervals}.
It is  straightforward to check that  $h_1$ and $h_2$ are nondecreasing functions of $\tau$.   
As we will see, $h_1$ and $h_2$ are actually differentiable functions of $\tau$. 
For the type of Lipschitz bounds which we seek to establish,
the following rough estimates suffice: for $y\geq 0$ and $A < 0.45$,
\begin{equation}\label{gAEst}
 \Upsilon_A(y) \leq 0.39  \mbox{ if}~ y\in\left[0, \frac{1}{10}\right], 
 \;\;\;\textrm{ and }\;\;\;
\Upsilon_A(y)\geq -0.09  \mbox{ if}~y\notin\left[\frac{7}{5}, \frac{9}{5}\right].
\end{equation}
As $\tau$ increases, $h_2(\tau)$ computes the integral over a smaller area of the most negative part of the function $\Upsilon_A$. The second
bound in \eqref{gAEst} implies that, for 
$$ \frac{1}{2} - 2\tau \geq \frac{9}{5} - \frac{7}{5} \Longleftrightarrow \tau \leq \frac{1}{20},$$
the optimal set $I_2^*(\tau)$ will get smaller in a region where the function $\Upsilon_A$ is, albeit negative, larger than $-0.09$. 
Let $0\leq\tau_0\leq 1/20$. For sufficiently small $\varepsilon>0$, we have that $I_2^*(\tau_0+\varepsilon)\subset I_2^*(\tau_0)$.
Since 
$$|I_2^*(\tau_0+\varepsilon)|=\frac12-2(\tau_0+\varepsilon) \quad \mbox{and} \quad |I_2^*(\tau_0)|=\frac12-2\tau_0,$$ 
we see that the set $K:=I_2^*(\tau_0)\setminus I_2^*(\tau_0+\varepsilon)$ has measure $|K|=2\varepsilon$. By H\"older's inequality, it then follows that 
\begin{equation}\label{Holder}
h_2(\tau_0+\varepsilon)-h_2(\tau_0)
=\int_{K}\Upsilon_A 
\leq   \|\Upsilon_A\|_{L^\infty(K)}  \cdot |K|
\leq 0.09\cdot 2\varepsilon.
\end{equation}
Dividing the left and right most sides of this chain of inequalities by $\varepsilon$, and letting $\varepsilon\to 0^+$, yields
$$ \frac{d h_2}{d\tau} (\tau) \leq 2 \cdot 0.09 = 0.18 \;\;\; \mbox{for} \;\;\; \tau \leq \frac{1}{20}.$$
In a similar but slightly simpler way, using instead the first bound in \eqref{gAEst}, one can verify that
$$ \frac{d h_1}{d\tau}(\tau) \leq 2 \cdot 0.39 = 0.78 \;\;\; \mbox{for} \;\;\; \tau \leq \frac{1}{10}. $$
As a consequence, Lip$(h_1+h_2)\leq 0.96<1$ on the interval $\tau\in[0,1/20]\supset [0,13/500)$.
This establishes \eqref{LipGoal} and completes the proof of Theorem \ref{1Dthm} except for the upper bound which is the subject of the next section.
\end{proof}

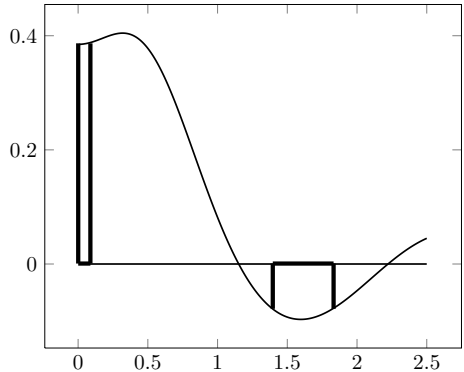
\begin{figure}[h!]
\centering
\begin{tikzpicture}[scale=0.8]
\begin{axis}
\addplot[samples=500,domain=0.01:2.5, thick]{sin(2*pi*0.45*deg(x))/(2*pi*x) + 13/(400)*(8*pi*x^2-2)*exp(-pi*x*x)};
\addplot[samples=500,domain=0.01:2.5, thick]{0};
\end{axis}
\draw [ultra thick] (3.75,1.4) -- (4.75,1.4);
\draw [ultra thick] (3.75,1.4) -- (3.75,0.65);
\draw [ultra thick] (4.75,1.4) -- (4.75,0.65);

\draw [ultra thick] (0.55,1.4) -- (0.75,1.4);
\draw [ultra thick] (0.55,1.4) -- (0.55,5.05);
\draw [ultra thick] (0.75,1.4) -- (0.75,5.05);
\end{tikzpicture}
\captionsetup{width=0.95\textwidth}
\caption{Intervals $I_1^*$ (on the left) and $I_2^*$ (on the right) for $\Upsilon_{A}$ at $A=0.45$ and $\tau \sim 0.02$.}
\label{fig:intervals}
\end{figure}

\subsection{Proof of the upper bound by an explicit example}\label{sec:1DUB}
 This short section follows \cite[\S 2]{BCK} in spirit. As noted in \S \ref{ssec:Hermite}, any linear combination of suitably rescaled Hermite functions 
$$ f(x) = \sum_{n=0}^{\infty}{\alpha_n H_{4n}(\sqrt{2\pi} x) e^{-\pi x^2}}$$
satisfies $f=\widehat{f}$. A straightforward method to construct functions which satisfy assumptions \eqref{nonzero_integrable}$-$\eqref{Fourier} consists in simply
choosing finitely many nonzero coefficients $\{\alpha_n\}$ in such a way that $f(0) = 0$. By direct search (more precisely, by a greedy-type algorithm where
previously found candidates are perturbed in a favorable direction by adding a new function),
 we found the example
$$ \alpha_0 = -\frac{113}{100} \qquad \alpha_1 = \frac{1}{25} \qquad \alpha_2 = \frac{1}{3240} \qquad \alpha_3 = \frac{-\alpha_0 - 12\alpha_1 - 1680 \alpha_2}{665280} \qquad \alpha_n=0 \textrm{ if } n\geq 4$$
The arising function satisfies all assumption of Theorem \ref{1Dthm}, has its largest root at $\sim 0.59354$ and almost a double root
at $\sim 0.8990$, and is depicted in Figure \ref{fig:goodcandidate}.
This concludes the proof of Theorem \ref{1Dthm}.\\

\textit{Remark.} Theorem \ref{ext} is implicitly constructive in the sense that it guarantees that we could improve
this upper bound by adding further Hermite functions (since it implies that no finite linear combination of Hermite functions can be an extremizer). However,
the actual numerical improvement  observed after adding a multiple of $H_{16}$ is miniscule. This leads us to believe that our candidate function is close to optimal.

\qedhere

\section{Linear flows on the torus, and consequences}\label{sec:flows}

We start by proving an elementary statement about linear flows on the torus $\mathbb{T}^d=\R^d/(2\pi\Z)^d$, stating that all of them return to a small neighborhood of the origin infinitely many times. This is not a difficult result, and stronger results are available in the literature (see e.g. \cite{katok}).
Since this weaker statement is enough for our subsequent purposes and has a very short proof, we include it here. 

\begin{lemma}\label{closeto0} 
Let $\mathbb{T}^d$ denote the $d$-dimensional torus, and let $\|\cdot\|$ denote the induced norm from $\R^d$. For  ${\bf a} \in \mathbb{T}^d$, consider the linear flow $\gamma:\R\to \mathbb{T}^d$ given by
$$ \gamma(t) =  t{\bf a}.$$
For any $\varepsilon > 0$, there exists an infinite sequence of times $t_1 < t_2 < \dots $ with $t_{i} \in \mathbb{N}$ such that
$$ \| \gamma(t_i) \| \leq \varepsilon.$$ 
\end{lemma}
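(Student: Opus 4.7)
The statement is a quantitative recurrence result for linear flows on $\mathbb{T}^d$, and the natural tool is the pigeonhole principle (i.e.\ Dirichlet's simultaneous approximation theorem). The plan has two steps: first, produce a single positive integer $n$ with $\|n{\bf a}\|$ as small as we please; second, upgrade this to an infinite strictly increasing sequence.

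For the first step, fix $\varepsilon>0$ and partition $\mathbb{T}^d$ into finitely many cubes of diameter less than $\varepsilon$; call the number of cubes $M=M(\varepsilon,d)$ (for instance, cubes of side $\varepsilon/\sqrt d$ yield $M\lesssim (\sqrt d/\varepsilon)^d$ cubes). The $M+1$ points $\gamma(0),\gamma(1),\ldots,\gamma(M)$ cannot all lie in distinct cubes, so there exist integers $0\leq j<k\leq M$ with $\gamma(j)$ and $\gamma(k)$ in a common cube. Setting $n:=k-j\in\{1,\dots,M\}$ and using the translation-invariance of the torus metric, one obtains
\begin{equation*}
\|n{\bf a}\|=\|\gamma(k)-\gamma(j)\|<\varepsilon.
\end{equation*}
Exactly the same argument, applied with $\varepsilon$ replaced by $\varepsilon_j:=\varepsilon/j$, produces a sequence $\{n_j\}_{j\geq 1}$ of positive integers with $\|n_j{\bf a}\|<\varepsilon_j$.

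For the second step, I would split into two cases depending on whether the orbit of ${\bf a}$ in $\mathbb{T}^d$ is finite. If there exists a positive integer $m$ with $m{\bf a}=0$ in $\mathbb{T}^d$, set $t_i:=im$; then $\|\gamma(t_i)\|=0\leq\varepsilon$ and we are done. Otherwise all points $k{\bf a}$, $k\geq 1$, are distinct in $\mathbb{T}^d$, and consequently $\|n{\bf a}\|>0$ for every positive integer $n$. In this case I claim the sequence $\{n_j\}$ obtained above is unbounded: if some value $n$ occurred infinitely often, we would have $\|n{\bf a}\|\leq \varepsilon_j$ for infinitely many $j$, hence $\|n{\bf a}\|=0$, contradicting the assumption of the case. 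Unboundedness allows us to extract a strictly increasing subsequence $t_1<t_2<\cdots$ from $\{n_j\}$, and since each $n_j$ satisfies $\|n_j{\bf a}\|<\varepsilon_j\leq\varepsilon$, the same holds for $\{t_i\}$.

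No individual step is difficult; the only mild subtlety is the passage from ``one small return'' to ``infinitely many,'' which is why I organise the argument through the rational/irrational dichotomy above. I expect this to be the main (though still elementary) obstacle, as one must rule out the possibility that the pigeonhole argument keeps returning the same bounded set of integers.
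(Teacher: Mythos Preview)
Your argument is correct, but it takes a different route from the paper's. The paper invokes the Poincar\'e recurrence theorem: equipping $\mathbb{T}^d$ with Haar measure and considering the measure-preserving translation $Tx=x+{\bf a}$, Poincar\'e recurrence guarantees that almost every point of the ball $E=\{x:\|x\|\leq\varepsilon/2\}$ returns to $E$ infinitely often under iteration of $T$. Picking any such $x_0\in E$ and writing $\gamma(n)=-x_0+T^n(x_0)$ immediately gives $\|\gamma(n)\|\leq\varepsilon$ for infinitely many $n$. Your approach, by contrast, is the classical Dirichlet pigeonhole argument and is entirely elementary: no measure theory, no black-box recurrence theorem. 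The trade-off is that Poincar\'e recurrence hands you infinitely many returns in one stroke, whereas you must pay for elementarity with the rational/irrational case split (which, as you correctly anticipated, is the only place requiring care). One small remark: in your Case~2 you should make explicit that a bounded sequence of positive integers takes only finitely many values, hence some value repeats infinitely often; you use this implicitly when reducing ``bounded'' to ``some value occurs infinitely often.''
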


\begin{proof} We equip the torus $\mathbb{T}^d$ with the normalized Haar measure $\mu$, and consider the translation map $T:\mathbb{T}^d \rightarrow \mathbb{T}^d$ given by
$$ Tx = x +\textbf{a}.$$
The map $T$ clearly preserves the  measure $\mu$. Let  $\varepsilon > 0$ be arbitrary, and consider the ball
$$ E = \left\{x \in \mathbb{T}^d: \|x\| \leq \frac{\varepsilon}{2} \right\}.$$
The Poincar\'{e} recurrence theorem for the discrete-time case \cite[p.~142]{katok} states that almost every point of $E$ returns to $E$ infinitely often under positive iterations by $T$. In other words, the set
$$ F := \left\{x \in E: ~\exists N \in \mathbb{N}: ~T^n(x) \notin E\; \mbox{for all} ~ n > N\right\} \; \mbox{has zero Haar measure,}$$
i.e. $\mu(F) = 0$.
Thus there  exists $x_0 \in E \setminus F$. By additivity of $T$, we have
$$ \gamma(n)  =  n \textbf{a} = - x_0 + ( x_0 + n \textbf{a}) = -x_0 + T^n(x_0).$$
This, together with the fact that $x_0\in E\setminus F$, implies that
$\|\gamma(n)\| \leq \varepsilon$ for infinitely many $n \in \mathbb{N}$.
\end{proof}
The construction used in the one-dimensional proof of Theorem \ref{ext} below will make use of the sequence of functions $\{\varphi_n\}$ defined as
\begin{equation}\label{def_varphi}
\varphi_n(x):=   \frac{1}{H_{4n+4}(0)} H_{4n+4}(\sqrt{2\pi} x)e^{-\pi x^2}  -\frac{1}{H_{4n}(0)}H_{4n}(\sqrt{2\pi}x) e^{-\pi x^2},
\end{equation}
where $H_n$ is the Hermite polynomial of degree $n$. 
We note that  
\begin{equation}\label{hotHn}
 H_n(x) = 2^n x^n +  \textrm{lower order terms},
 \end{equation}
and remark that
\begin{equation}\label{atzeroHn}
H_{4n}(0)=\frac{\Gamma(4n+1)}{\Gamma(2n+1)}.
\end{equation}
For every $n\in\N$, the function $\varphi_n$ coincides with its Fourier transform. It also satisfies $\varphi_n(0)=0$. Furthermore, identities \eqref{hotHn} and \eqref{atzeroHn} imply
\begin{equation}\label{PhiExpand}
 \varphi_n(x)= e^{-\pi x^2}(  a_{4n+4} x^{4n+4} + \textrm{lower order terms} ),  \quad \mbox{where} ~ a_{4n+4} =   2^{6n+6} \pi^{2n+2} \frac{\Gamma(2n+3)}{\Gamma(4n+5)} > 0,
 \end{equation}
and therefore $\varphi_n(x)>0$ as soon as $|x|$ is sufficiently large, depending on $n$.
We are not aware of any result of the following type and consider it to be of independent interest.

\begin{lemma}\label{herm} 
Let $\{a_1, a_2, \dots, a_k\} \subset \mathbb{R}_{+}$ be any finite subset of the positive half-line. Then there exist infinitely many $n \in \mathbb{N}$ such that
$$ \min_{1\leq j\leq k} \varphi_n(a_j) >0.$$
\end{lemma}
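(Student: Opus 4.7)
The plan is to extract the leading-order behavior of $\varphi_n(y)$ as $n\to\infty$ from the refined Hermite asymptotic \eqref{HermiteAsympt}, and then exhibit infinitely many $n$ for which this leading term has the desired sign at every $a_j$ by means of a recurrence argument for linear flows on the torus in the spirit of Lemma \ref{closeto0}.

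Applying \eqref{HermiteAsympt} to $H_{4n}$ and $H_{4n+4}$, noting that $n\pi/2$ is an integer multiple of $\pi$ in both cases, using $H_{4n}(0)=\Gamma(4n+1)/\Gamma(2n+1)$, and substituting $x=\sqrt{2\pi}\,y$ yields, uniformly on compact subsets of $\R$,
$$\varphi_n(y)=\cos(\beta_n y)-\cos(\alpha_n y)+\mathcal{O}(1/n),$$
where $\alpha_n:=\sqrt{2\pi(8n+1)}$ and $\beta_n:=\sqrt{2\pi(8n+9)}$. One cannot use the cruder expansion \eqref{simplifiedHermiteAsympt} here, since its $\mathcal{O}(n^{-1/2})$ error matches the size of the leading term above; on the other hand, the $x^3$ corrections for the two indices from \eqref{HermiteAsympt} differ by only $\mathcal O(1/n)$, a short computation based on $|\beta_n-\alpha_n|=\mathcal O(n^{-1/2})$ and $|(8n+9)^{-1/2}-(8n+1)^{-1/2}|=\mathcal O(n^{-3/2})$. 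Writing $\omega_n:=\tfrac12(\alpha_n+\beta_n)$ and $\delta_n:=\tfrac12(\beta_n-\alpha_n)=\sqrt{\pi/n}+\mathcal O(n^{-3/2})$, the product-to-sum identity then gives
$$\sqrt{n}\,\varphi_n(y)=-2\sqrt{\pi}\,y\,\sin(\omega_n y)+\mathcal O(n^{-1/2}),$$
so the lemma is reduced to producing some $\delta>0$ and infinitely many $n$ with $\sin(\omega_n a_j)\leq -\delta$ for every $j=1,\dots,k$.

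For this, consider the linear flow $\phi(t)=t\mathbf{a}\bmod 2\pi$ on $\mathbb{T}^k=(-\pi,\pi]^k$, where $\mathbf{a}=(a_1,\dots,a_k)\in\R_+^k$. Choose $s>0$ with $s\max_j a_j<\pi/2$; the point $-s\mathbf{a}$ then lies in $(-\pi+\eta,-\eta)^k$ for $\eta:=s\min_j a_j>0$ and, by construction, in the closure of the forward orbit $\{\phi(t):t\geq 0\}$ (which coincides with the full orbit closure of $\phi$ by equidistribution). Hence the set $T_\eta:=\{t\geq 0:\phi(t)\in(-\pi+\eta,-\eta)^k\}$ is open, unbounded, and, since $\phi$ has constant speed $|\mathbf{a}|$, is a disjoint union of intervals of lengths uniformly bounded below by some $\rho=\rho(\eta,\mathbf{a})>0$, with start-times going to infinity. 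The same conclusion can be reached directly from Lemma~\ref{closeto0}: given a large integer $N$ with $\|N\mathbf{a}\bmod 2\pi\|$ small, the time $t^*=N-s$ has $\phi(t^*)$ near $-s\mathbf{a}$, and continuity yields an interval about $t^*$.

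Finally, a direct estimate gives $\omega_{n+1}-\omega_n\to 0$, so for some $N_0$ the sequence $\{\omega_n\}_{n\geq N_0}$ is $\rho$-dense in $[\omega_{N_0},\infty)$ and in particular meets every sufficiently distant interval of $T_\eta$. This yields infinitely many $n$ with $\phi(\omega_n)\in(-\pi+\eta,-\eta)^k$, whence $\sin(\omega_n a_j)\leq -\sin\eta$ for all $j$; inserting this into the asymptotic expansion and taking $n$ large enough completes the proof. The hardest part will be the first step: because the difference in the definition of $\varphi_n$ causes the naive leading term to collapse to the same order as the remainder in \eqref{simplifiedHermiteAsympt}, one must carefully extract the refined expansion \eqref{HermiteAsympt} and verify that its $\sin$-subcorrections cancel to one higher order before the torus-recurrence argument can take over.
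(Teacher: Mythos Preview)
Your approach is essentially the paper's: use the refined asymptotic \eqref{HermiteAsympt} to get $\varphi_n(y)=-c\,y\,n^{-1/2}\sin(\omega_n y)+\mathcal O(1/n)$ (the paper reaches the same via Taylor expansion rather than product-to-sum, with $\omega_n$ replaced by $4\sqrt{\pi n}$), then combine recurrence of the linear flow $t\mapsto t\mathbf a$ on $\mathbb T^k$ with $\omega_{n+1}-\omega_n\to 0$ to land infinitely many $\omega_n$ in the region where the sine is uniformly negative. Two small slips to clean up: your choice $\eta=s\min_j a_j$ puts $-s\mathbf a$ on the \emph{boundary} of $(-\pi+\eta,-\eta)^k$ rather than in its interior (take any strictly smaller $\eta$); and the assertion that \emph{every} component interval of $T_\eta$ has length $\ge\rho$ is neither justified nor needed, since your alternative via Lemma~\ref{closeto0} already yields infinitely many times $t^*=N-s$ with $\phi(t^*)$ at definite distance from the boundary, hence intervals of definite length around each $t^*$ --- and that is precisely the paper's argument as well. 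The parenthetical ``by equidistribution'' is inaccurate (the flow need not be equidistributed), but you don't use it.
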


\begin{proof}
 Let $0 < a_1 < a_2 < \dots <a_k$ be given and fixed, and write $\textbf{a}=(a_1,a_2,\ldots,a_k)$. We are only interested in the values of the functions $\varphi_n$ at the points $a_j$, and can therefore replace
Hermite functions by a pointwise approximation given by the asymptotic expansion \eqref{HermiteAsympt}. 
Note that we are only dealing with indices that are a multiple of 4 and therefore get a simplified asymptotic expansion without phase shift
$$
\frac{1}{H_{4n}(0)}e^{-\pi x^2} H_{4n}(\sqrt{2\pi} x) 
= \cos{  \left(\sqrt{8n+1}\sqrt{2\pi}x \right)} + \frac{(\sqrt{2\pi}x)^3}{6\sqrt{8n+1}}\sin{\left(\sqrt{8n+1}\sqrt{2\pi} x\right)} + \mathcal{O}\left(\frac{1}{n}\right).
$$
This implies, again for fixed $x \in \mathbb{R}$,
\begin{align*}
\varphi_n(x) &= \cos{  \left(\sqrt{8n+9}\sqrt{2\pi}x \right)} - \cos{  \left(\sqrt{8n+1}\sqrt{2\pi}x \right)} \\
&+\frac{(\sqrt{2\pi}x)^3}{6\sqrt{8n+9}}\sin{\left(\sqrt{8n+9}\sqrt{2\pi} x\right)} - \frac{(\sqrt{2\pi}x)^3}{6\sqrt{8n+1}}\sin{\left(\sqrt{8n+1}\sqrt{2\pi} x\right)} +  \mathcal{O}\left(\frac{1}{n}\right),
\end{align*}
where the implicit constant in the error term may depend on $x$. Basic algebra yields
$$ \sqrt{8n+9} =  \sqrt{8n+1}+\frac{4}{\sqrt{8n+1}} + \mathcal{O}\left(\frac{1}{n^{3/2}}\right)$$
and therefore, by Taylor expansion,
\begin{align*}
 \cos{  \left(\sqrt{8n+9}\sqrt{2\pi}x \right)} &= \cos{  \left(\sqrt{8n+1}\sqrt{2\pi}x + \frac{4\sqrt{2\pi} x}{\sqrt{8n+1}} + \mathcal{O}\left(\frac{1}{n^{3/2}}\right) \right)} \\
&=  \cos{  \left(\sqrt{8n+1}\sqrt{2\pi}x\right)} -\sin{  \left(\sqrt{8n+1}\sqrt{2\pi}x\right)} \frac{4\sqrt{2\pi} x}{\sqrt{8n+1}}  + \mathcal{O}\left(\frac{1}{n}\right).
\end{align*}
The same type of argument yields 
$$ \frac{(\sqrt{2\pi}x)^3}{6\sqrt{8n+9}}\sin{\left(\sqrt{8n+9}\sqrt{2\pi} x\right)} - \frac{(\sqrt{2\pi}x)^3}{6\sqrt{8n+1}}\sin{\left(\sqrt{8n+1}\sqrt{2\pi} x\right)} =  \mathcal{O}\left(\frac{1}{n}\right),$$
where, as always, the implicit constant in the error term is allowed to depend on $x$ but not on $n$, and can be chosen uniformly in $x$ inside any interval of finite length. Therefore, for fixed $x \in \mathbb{R}$,
$$ \varphi_n(x) = -\sin{  \left(\sqrt{8n+1}\sqrt{2\pi}x\right)} \frac{4\sqrt{2\pi} x}{\sqrt{8n+1}}  + \mathcal{O}\left(\frac{1}{n}\right).$$
Finally, we note that
$$\sqrt{8n+1} = \sqrt{8n} + \frac{1}{2\sqrt{8n}} + \mathcal{O}\left(\frac{1}{n}\right),$$
and further simplify
$$ \varphi_n(x) = -\sin{  \left(4\sqrt{\pi n}x\right)} \frac{4\sqrt{2\pi} x}{\sqrt{8n+1}}  + \mathcal{O}\left(\frac{1}{n}\right).$$
{Because of} continuity properties of the sine function, it is sufficient to prove the existence of
infinitely many $n \in \mathbb{N}$ and of $\theta_{\textbf{a}}>0$ such that
$$  \sin{ \left( 4\sqrt{ \pi n}a_j \right)}  \leq -\frac{\theta_{\textbf{a}}}{2} < 0 \qquad \mbox{for every} \quad 1 \leq j \leq k.$$
Clearly, the truth of such a statement depends on where the sequence 
\begin{equation}\label{seqTd}
\left( 4\sqrt{ \pi n}a_1, 4\sqrt{ \pi n}a_2, \dots, 4\sqrt{\pi n}a_k\right)
\end{equation}
is located inside the torus $\mathbb{T}^k\cong [0, 2\pi]^k$.
We need to prove that infinitely many elements of this sequence lie in the subset 
$$[\pi + \delta, 2\pi - \delta]^k \subset  \mathbb{T}^k,$$ 
for a sufficiently small $\delta > 0$ that is allowed to depend on $\textbf{a}$ (and would guarantee the desired statement with $\theta_{\textbf{a}} = 2\sin{\delta}$).
Clearly, this sequence of points is contained in the ray $\gamma:\mathbb{R}_{+} \rightarrow \mathbb{T}^k,$
$$ \gamma(t) =   4\sqrt{ \pi }\left(a_1, a_2, \dots, a_k\right)t.$$
Thanks to the elementary fact
$$ \sqrt{n+1} - \sqrt{n} \leq \frac{1}{2\sqrt{n}} = o_n(1),$$
it suffices to show that the ray $\gamma(t)$ intersects the subset $[\pi+\delta,2\pi-\delta]^k$ for an increasing sequence of real numbers that tend to infinity: the sublinear growth of the
square root will then allow us to find nearby integers whose square roots are still mapped into that subset via $\gamma$.
It is well known that, depending on the diophantine properties of
$\textbf{a} = (a_1, \dots, a_k)$, the linear flow may or may not be dense in $\mathbb{T}^k$. However, $\{a_1,\ldots,a_k\}$ could be any collection of positive real numbers, and we cannot impose any sort of control on its number-theoretic properties.
A much simpler
argument suffices: According to Lemma \ref{closeto0}, \textit{any} linear flow on the torus will pass within any arbitrarily small neighborhood of the origin infinitely many times.
After leaving the origin, such a ray
will always intersect a subset  $[\varepsilon, \pi -\varepsilon]^k$ for some $\varepsilon > 0$ (see Figure \ref{fig:flow}). Clearly, the angle of the ray will determine the possible size of $\varepsilon$, but for a fixed direction $\textbf{a}\in\T^k$ such $\varepsilon$ can always be explicitly given. Set, for instance,
$$\varepsilon = \frac{1}{2}\frac{\min_{1 \leq j \leq k}{a_j}}{|\textbf{a}|},$$
and note that, for $ t=(2\left| \textbf{a} \right|)^{-1}$,
$$ t \textbf{a} = \left( \frac{a_1}{2\left| \textbf{a} \right|}, \frac{a_2}{2\left| \textbf{a} \right|}, \dots, \frac{a_k}{2\left| \textbf{a} \right|} \right).$$
Every entry of this vector is larger than $\varepsilon$ and smaller than 1/2, and therefore the vector is certainly contained in $[\varepsilon, \pi -\varepsilon]^k$. 
Setting $\delta=2\varepsilon$, this shows that infinitely many elements of the sequence \eqref{seqTd} lie in  $[\delta, \pi - \delta]^k \subset \mathbb{T}^k$. By symmetry (i.e. reversing the flow of time), the same result holds for $[\pi + \delta, 2\pi - \delta]^k \subset  \mathbb{T}^k$.
\end{proof}

\begin{center}
\begin{figure}[h!]
\centering
\begin{tikzpicture}[scale=3]
\draw[ultra thick] (0,0) -- (1,0);
\draw[ultra thick] (1,1) -- (1,0);
\draw[ultra thick] (1,1) -- (0,1);
\draw[ultra thick] (0,1) -- (0,0);
\draw[dashed, thick] (0.1,0.1) -- (0.1,0.4);
\draw[dashed, thick] (0.1,0.4) -- (0.4,0.4);
\draw[dashed, thick] (0.4,0.1) -- (0.4,0.4);
\draw[dashed, thick] (0.1,0.1) -- (0.4,0.1);
\draw (0,0.5) -- (1,0.5);
\draw (0.5,0) -- (0.5,1);
\draw[ultra thick] (0,0) circle (0.01cm);
\draw[thick,->] (0,0) -- (0.2,0.3);
\end{tikzpicture}
\captionsetup{width=0.9\textwidth}
\caption{A linear flow on $\mathbb{T}^2$ starting at the origin in a direction all of whose components are positive will always hit the square $[{\varepsilon}, \pi -{\varepsilon}]^2$ (dashed) for some ${\varepsilon} > 0$.}
\label{fig:flow}
\end{figure}
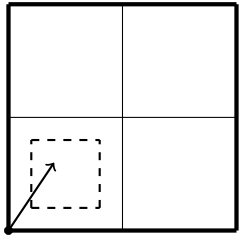
\end{center}

A closer look at the proof of Lemma \ref{herm}  suggests that in the generic case of
($a_1, a_2, \dots, a_k)$ being linearly independent over $\mathbb{Q}$  stronger results will hold: the linear flow will be uniformly
distributed, and any of the $2^k$ possible prescribed sign patterns will occur with equal frequency. However, the statement could still be true even if the entries
are not linearly independent: Linear flows on the torus, which arise as a first order limiting object, will be arbitrarily 
close to the origin infinitely often and any open neighborhood of the origin already contains all possible $2^k$ sign patterns. A more
detailed understanding could be of interest.

\subsection{Classical Hermite polynomials} Lemma \ref{herm} is a statement about a certain linear combination of Hermite functions. We now prove the corresponding result for classical Hermite polynomials, Theorem \ref{Hermite}. The proof is actually simpler than that of Lemma \ref{herm} because it suffices for the arising ray in the torus to be close to the origin, in any admissible direction. This allows us to show the result for any finite subset of the whole real line.

\begin{proof}[Proof of Theorem \ref{Hermite}] The proof is similar to that of Lemma \ref{herm}. We are only interested in finitely many points, and may thus use \eqref{simplifiedHermiteAsympt}.
Restricting attention to those $n$ which are divisible by 4 simplifies the cosine term and yields

\begin{equation}\label{HermiApprox}
\frac{\Gamma(2n+1)}{\Gamma(4n+1)}e^{-\frac{x^2}2} H_{4n}(x) 
=\cos(\sqrt{8n+1}x)+\mathcal{O}\left(\frac{1}{\sqrt{n}}\right).
\end{equation}
As before, the  statement reduces to showing that the linear flow
$$t\mapsto (a_1, a_2, \dots, a_k)t \qquad \mbox{intersects} \quad \left[-\frac{\pi}{2} + \delta, \frac{\pi}{2} - \delta \right]^k \subset \mathbb{T}^k$$ 
for an unbounded sequence of times $t_1 < t_2 < \dots$  and $\delta > 0$ which may depend on the set $\left\{a_1, a_2, \dots, a_k\right\}$. In turn, this is an immediate consequence of Lemma \ref{closeto0}, which in particular implies that any linear flow will return to, say, a
$1/10$-neighborhood of the origin infinitely often.  The cosine is positive in an entire $\pi/2$-neighborhood of the origin and the first statement follows.
By instead considering polynomials $H_n$ with $n \equiv 2~ (\mbox{mod } 4)$, we observe a phase shift in the cosine that changes the sign. The same argument applies and produces an infinite family of Hermite polynomials assuming negative values at $a_j$ for every $1\leq j\leq k$.
\end{proof}

{\it Remark.}
In the statement of Theorem \ref{Hermite}, the restriction to indices divisible by 4  is sufficient for our applications and allows to bypass a number of case distinctions. However, the argument works for every integer $n \in \mathbb{N}$, and
for linearly independent $a_1, a_2, \dots, a_k$ it implies that every possible sign pattern
appears asymptotically with density $2^{-k}$. 
Therefore, Theorem \ref{Hermite} merits further investigation only  when the points $a_1, a_2, \dots, a_k$ exhibit some form of linear dependence. 
The following example highlights  the distinguished role played by the sign configuration $(+, +, \dots, +)$.
\begin{example} \label{obstruction}
The sequence
$$ \left( H_{4n}(1),  H_{4n}(2),  H_{4n}(3),  H_{4n}(4) \right)_{n=1}^{\infty}$$
assumes the sign configuration $(+, +, -, +)$ at most finitely many times.
\end{example}
\begin{proof}[Sketch of proof]  Using \eqref{HermiApprox} and a simple expansion,
\begin{align*}
\frac{\Gamma(2n+1)}{\Gamma(4n+1)}e^{-\frac{x^2}2} H_{4n}(x) 
=\cos(\sqrt{8n+1}x)+\mathcal{O}\left(\frac{1}{\sqrt{n}}\right) 
= \cos(\sqrt{8n}x)+\mathcal{O}\left(\frac{1}{\sqrt{n}}\right).
\end{align*}
As before, this reduces the problem to studying the flow $t\mapsto(t,2t,3t,4t)$ on the torus $\mathbb{T}^4$. We would like to know that this flow intersects the subset 
$$ \left( \mathbb{T} \setminus \left[ \frac{\pi}{2}, \frac{3 \pi}{2} \right]  \right) \times
\left( \mathbb{T} \setminus \left[ \frac{\pi}{2}, \frac{3 \pi}{2} \right]  \right) \times
\left[ \frac{\pi}{2}, \frac{3 \pi}{2} \right]
\times \left( \mathbb{T} \setminus \left[ \frac{\pi}{2}, \frac{3 \pi}{2} \right]  \right)  \subset \mathbb{T}^4$$
at most finitely many times.
Introducing the fractional part $\left\{y \right\} = y - \lfloor y \rfloor$ and performing an appropriate rescaling, we  analyze the case when the first, second and fourth coordinate behave as described, i.e.
$$   \left(  \left\{y\right\} \notin [1/4, 3/4]\right) \wedge\left(  \left\{2y\right\} \notin [1/4, 3/4]\right) \wedge \left(  \left\{4y\right\} \notin [1/4, 3/4]\right).$$
This set is $1$-periodic and easily seen to be described by the condition
$$  \left\{y\right\} \in \left[ 0, \frac{1}{16} \right) \cup  \left( \frac{15}{16},1\right),$$
which in turn implies
$$  \left\{3y\right\} \in \left[ 0, \frac{3}{16} \right) \cup  \left( \frac{13}{16},1\right).$$
This set is at positive distance $1/16$ from the interval $ [1/4, 3/4]$, and so the sign configuration $(+,+,-,+)$ is never attained. The argument up to now ignored the error term of order $n^{-1/2}$. Taking it into account, one sees that the sign configuration of $\left( H_{4n}(1),  H_{4n}(2),  H_{4n}(3),  H_{4n}(4) \right)$ will be distinct from $(+,+,-,+)$ for  every sufficiently large $n$, as desired.
\end{proof}

\subsection{Laguerre polynomials} As mentioned before, results for Hermite polynomials like Theorem \ref{Hermite} and Lemma \ref{herm} hold in greater generality. We briefly discuss the case of Laguerre polynomials (see \S \ref{ssec:Laguerre}).

\begin{proposition}\label{Laguerre}
 Let $\nu>-1$ be such that
$\nu+1/2$ is not an odd integer, and let $\left\{ a_1, a_2, \dots, a_k \right\} \subset \mathbb{R}_{+}$ be a finite set of  positive reals. Then there are infinitely many $n \in \mathbb{N}$ such
that 
$$\forall~1 \leq j \leq k: \qquad \sgn(L_{n}^{\nu}(a_j)) = \sgn \left( \cos{\left(\frac{\pi}{2}\left(\nu + \frac{1}{2} \right) \right)} \right).$$
\end{proposition}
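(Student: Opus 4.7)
The plan is to mirror the approach of Theorem \ref{Hermite}, with Fej\'er's formula \eqref{laguerre-asymp} playing the role of the Hermite asymptotic. Set $\alpha := \tfrac{\pi}{2}(\nu + \tfrac{1}{2})$. The hypothesis that $\nu + 1/2$ is not an odd integer amounts exactly to $\cos\alpha \neq 0$, so the right-hand side in the desired conclusion is a well-defined nonzero sign. For each fixed $a_j > 0$, the prefactor $a_j^{\nu/2+1/4}e^{-a_j/2}$ and the global factor $\pi^{-1/2} n^{\nu/2 - 1/4}$ in \eqref{laguerre-asymp} are strictly positive; dividing through yields
$$c_{j,n}\, L_n^\nu(a_j) \;=\; \cos\bigl(2\sqrt{n a_j} - \alpha\bigr) + O\bigl(n^{-1/2}\bigr),$$
with $c_{j,n} > 0$ and the implicit constant depending only on $a_j$ and $\nu$. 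Hence $\sgn L_n^\nu(a_j)$ coincides with the sign of the cosine as soon as the latter is bounded away from zero by more than the error.

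Since cosine is continuous and $\cos(-\alpha) = \cos\alpha \neq 0$, I would fix $\delta \in (0, \pi/2)$ small enough that $|\cos(\theta - \alpha)| \geq |\cos\alpha|/2$ and $\sgn\cos(\theta - \alpha) = \sgn\cos\alpha$ whenever $|\theta| \leq \delta \pmod{2\pi}$. The problem then reduces to producing an unbounded sequence of $n \in \N$ with
$$2\sqrt{n}\, \sqrt{a_j} \in [-\delta, \delta] \pmod{2\pi} \quad \text{for every } 1 \leq j \leq k.$$
This is exactly where Lemma \ref{closeto0} enters: apply it to the point $\mathbf{b} := (2\sqrt{a_1}, \ldots, 2\sqrt{a_k}) \in \T^k$ with $\varepsilon := \delta$, producing positive integers $t_1 < t_2 < \cdots$ with $\|t_i \mathbf{b}\|_{\T^k} \leq \delta$. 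Setting $n_i := t_i^2 \in \N$, one has $\sqrt{n_i} = t_i$ exactly, so that $2\sqrt{n_i a_j}$ is precisely the $j$-th coordinate of $t_i \mathbf{b}$ and lies within $\delta$ of $0 \pmod{2\pi}$ for every $j$ simultaneously. This step is in fact slightly cleaner than in the proof of Lemma \ref{herm}, where the phase $\sqrt{8n+1}$ forced an additional sublinear gap argument.

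The only genuine obstacle is to rule out the possibility that the $O(n_i^{-1/2})$ error in Fej\'er's expansion flips the sign of the cosine. This is harmless: by the choice of $\delta$, the cosine is bounded below in absolute value by $|\cos\alpha|/2 > 0$ uniformly over the finite set $\{a_1, \ldots, a_k\}$, whereas the error tends to $0$. Thus for all sufficiently large indices $i$ in the sequence above, $\sgn L_{n_i}^\nu(a_j) = \sgn\cos\alpha$ simultaneously for every $j$, delivering the required infinite family and completing the proposed proof.
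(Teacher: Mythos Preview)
Your proposal is correct and follows the same strategy as the paper's sketch: invoke Fej\'er's asymptotic \eqref{laguerre-asymp} to reduce the sign question to a torus recurrence problem, then apply Lemma~\ref{closeto0}. The one noteworthy difference is in how you pass from the continuous flow to integer~$n$. The paper, by pointing to ``the same reasoning as before,'' implicitly relies on the sublinear gap argument $\sqrt{n+1}-\sqrt{n}=o(1)$ used in the proofs of Lemma~\ref{herm} and Theorem~\ref{Hermite}: once the flow $t\mapsto 2(\sqrt{a_1},\dots,\sqrt{a_k})\,t$ is near the origin at some real time, a nearby $\sqrt{n}$ can be found. Your device of taking $n_i=t_i^2$ with $t_i\in\N$ from Lemma~\ref{closeto0} sidesteps this entirely, since $\sqrt{n_i}=t_i$ is then an integer on the nose. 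This is a genuine simplification specific to the Laguerre phase $2\sqrt{nx}$ (it would not work for the Hermite phase $\sqrt{8n+1}\,x$), and it makes the argument slightly cleaner than the paper's.
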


\begin{proof}[Sketch of proof] 
Using Fej\'er's formula \eqref{laguerre-asymp}, we can repeat the same reasoning as before, and reduce matters to analyzing the flow

$$t\mapsto 2(\sqrt{a_1},\sqrt{a_2},\ldots,\sqrt{a_k})t -   \frac{\pi}{2}\left(\nu + \frac{1}{2} \right) \left( 1,1,\ldots,1 \right)$$
on $\T^k$. 
As before, the first term will pass arbitrarily close to the origin infinitely many times. 
The cosine of each of the entries of the second term is nonzero precisely when $\nu+1/2$ is not an odd integer, and the result follows.
\end{proof}

\section{Extremizers}\label{sec:Structure}

\subsection{Existence of extremizers} 
 
The proof  of Theorem \ref{existence-thm} requires two results from the literature. The following lemma can be found in most functional analysis books, see \mbox{e.g. \cite{ET}}.

\begin{lemma}[Mazur's Lemma]
Let $E$ be a Banach space and let $\{x_n\}$ be a sequence in $E$ such that $x_n \rightharpoonup x$ in the weak topology. Then there exists a sequence $\{y_n\}$ in $E$, such that each $y_n$ is a convex combination of $\{f_n,f_{n+1},...,f_{N_n}\}$, for some $N_n\geq n$, and such that
$$
y_n \to x \ \ \ \  \text{strongly}.
$$
\end{lemma}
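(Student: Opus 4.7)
The plan is to establish Mazur's Lemma via the classical route through the Hahn--Banach separation theorem, using the fact that for convex subsets of a Banach space, the norm closure and the weak closure coincide.

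First, for each $n\in\N$ I would define the convex hull of the tail of the sequence,
$$C_n := \mathrm{conv}\{x_k : k\geq n\}.$$
By construction, every element of $C_n$ is a finite convex combination of some $\{x_n, x_{n+1},\ldots, x_{N}\}$ with $N\geq n$. Since $x_k\rightharpoonup x$, the tail sequence $(x_k)_{k\geq n}$ still converges weakly to $x$; hence $x$ lies in the weak closure $\overline{C_n}^{\,w}$ of the convex set $C_n$.

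The crux of the argument is to promote weak closure to norm closure. I would establish, as a separate step, that for any convex set $C\subset E$ one has $\overline{C}^{\,w}=\overline{C}^{\,\|\cdot\|}$. The inclusion $\overline{C}^{\,\|\cdot\|}\subset \overline{C}^{\,w}$ is immediate because the weak topology is coarser than the norm topology. The reverse inclusion is where Hahn--Banach enters: suppose $z\notin K:=\overline{C}^{\,\|\cdot\|}$. Since $K$ is a norm-closed convex set and $\{z\}$ is a compact convex set disjoint from $K$, the geometric form of the Hahn--Banach theorem supplies $\varphi\in E^*$ and $\alpha\in\R$ with
$$\varphi(z) > \alpha > \sup_{y\in K}\varphi(y).$$
Then $U:=\{v\in E:\varphi(v)>\alpha\}$ is a weakly open neighborhood of $z$ disjoint from $K\supset C$, so $z\notin \overline{C}^{\,w}$. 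Applying this with $C=C_n$ yields $x\in \overline{C_n}^{\,\|\cdot\|}$.

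To conclude, for each $n$ choose $y_n\in C_n$ with $\|y_n-x\|<1/n$. Because $y_n$ is a \emph{finite} convex combination of elements of $\{x_k:k\geq n\}$, there exists $N_n\geq n$ such that $y_n$ is a convex combination of $\{x_n, x_{n+1},\ldots,x_{N_n}\}$, and by construction $y_n\to x$ in norm. The only non-routine step is the weak/norm closure coincidence, whose proof reduces, as above, to a single application of Hahn--Banach separation; the rest of the argument is bookkeeping on convex combinations.
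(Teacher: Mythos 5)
Your proof is correct and is the standard argument for Mazur's Lemma: pass to the convex hulls $C_n$ of the tails, observe that $x$ lies in each weak closure, and upgrade to norm closure via the coincidence of weak and norm closures for convex sets, which in turn is a direct consequence of the geometric Hahn--Banach separation theorem. The paper does not supply its own proof of this lemma---it simply cites a functional analysis reference (Ekeland--T\'emam)---so there is nothing to compare against; your argument is precisely the one found in the standard sources. One cosmetic remark: the statement as printed in the paper has a typographical slip, writing $\{f_n, f_{n+1}, \dots, f_{N_n}\}$ where it should read $\{x_n, x_{n+1}, \dots, x_{N_n}\}$; you correctly worked with the $x_k$ throughout.
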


To show that extremizer candidates are nonzero, we will appeal to a higher dimensional version of the uncertainty principle of Nazarov \cite{Naz} due to Jaming \cite{J}. Since we will deal with balls only, we state the following result, which is sufficient for our purposes.

\begin{theorem}[Nazarov \& Jaming]\label{NazUP}
Let $B_1$ and $B_2$ be balls in $\R^d$ of radius $r_1$ and $r_2$ respectively. Then there exists a constant $C=C(d,r_1,r_2)$ such that, for every function $f\in L^2(\R^d)$,
$$
\int_{\R^d} |f(x)|^2dx \leq C \bigg(\int_{\R^d \setminus B_1} |f(x)|^2dx + \int_{\R^d \setminus B_2} |\widehat f(x)|^2dx\bigg).
$$
\end{theorem}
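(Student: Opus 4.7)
The plan is to recast the inequality in operator-theoretic language. Let $Q_{B_1}$ denote multiplication by $\chi_{B_1}$, and let $P_{B_2}$ denote the Fourier band-limiting projection defined by $\widehat{P_{B_2} f} = \chi_{B_2}\widehat{f}$; both are orthogonal projections on $L^2(\R^d)$, and Plancherel's identity translates the conclusion of the theorem into
$$
\|f\|_{L^2}^2 \leq C\bigl(\|(I - Q_{B_1}) f\|_{L^2}^2 + \|(I - P_{B_2}) f\|_{L^2}^2\bigr).
$$
The workhorse is the elementary algebraic identity
$$
f = P_{B_2} Q_{B_1} f \,+\, (I - P_{B_2}) f \,+\, P_{B_2}(I - Q_{B_1}) f,
$$
which, combined with the triangle inequality and the fact that $P_{B_2}$ is a contraction, yields $\|f\| \leq c \|f\| + \|(I - P_{B_2}) f\| + \|(I - Q_{B_1}) f\|$, where $c := \|P_{B_2} Q_{B_1}\|_{\mathrm{op}}$. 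Once we know that $c < 1$, rearranging and applying $(a+b)^2 \leq 2(a^2+b^2)$ produces the theorem with the explicit constant $C = 2/(1-c)^2$.

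The main obstacle is therefore to establish the strict inequality $c < 1$. To this end, I would first observe that $T := Q_{B_1} P_{B_2}$ is an integral operator with kernel $k(x,y) = \chi_{B_1}(x)\,K_{B_2}(x-y)$, where $K_{B_2}$ denotes the inverse Fourier transform of $\chi_{B_2}$ (an explicit Bessel-type function). Plancherel gives $\|K_{B_2}\|_{L^2}^2 = |B_2|$, so
$$
\|T\|_{\mathrm{HS}}^2 = \int_{\R^d}\!\int_{\R^d} |k(x,y)|^2 \, dy \, dx = |B_1|\cdot|B_2| < \infty,
$$
and $T$ is Hilbert-Schmidt, hence compact. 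Consequently the self-adjoint operator $T^*T = P_{B_2} Q_{B_1} P_{B_2}$ is compact as well, with operator norm equal to $c^2$.

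Suppose for contradiction that $c = 1$. Compactness and self-adjointness ensure that $T^*T$ attains its norm at some unit eigenvector $g$, so $P_{B_2} Q_{B_1} P_{B_2} g = g$. Since $\mathrm{range}(T^*T) \subset \mathrm{range}(P_{B_2})$, we may take $P_{B_2} g = g$, and the eigenequation reduces to $P_{B_2} Q_{B_1} g = g$. Pairing with $g$ and using $P_{B_2} g = g$ together with $Q_{B_1}^2 = Q_{B_1}$ yields $\|Q_{B_1} g\|^2 = \langle Q_{B_1} g, g\rangle = \langle g, g\rangle = 1 = \|g\|^2$, forcing $g$ to vanish almost everywhere outside $B_1$. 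Simultaneously, $\widehat{g} = \chi_{B_2}\widehat{g}$ is compactly supported, so by the Paley–Wiener theorem $g$ extends to an entire function on $\Co^d$. An entire function that vanishes on the non-empty open set $\R^d \setminus \overline{B_1}$ must vanish identically, contradicting $\|g\| = 1$. Therefore $c < 1$, and the proof is complete.
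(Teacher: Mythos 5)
Your proof is correct, but you should be aware that the paper does not prove Theorem \ref{NazUP} at all: it is stated with attribution and cited from Nazarov \cite{Naz} and Jaming \cite{J}. What you have reconstructed is not the Nazarov--Jaming argument but rather the older compactness argument of Amrein--Berthier, specialized to balls. The decomposition $f = P_{B_2}Q_{B_1}f + (I-P_{B_2})f + P_{B_2}(I-Q_{B_1})f$ reduces matters to $c:=\|P_{B_2}Q_{B_1}\|_{\mathrm{op}}<1$; the Hilbert--Schmidt bound $\|Q_{B_1}P_{B_2}\|_{\mathrm{HS}}^2=|B_1|\,|B_2|$ gives compactness; and the contradiction at $c=1$ runs through the spectral theorem, a norm-equality argument forcing $\operatorname{supp}g\subset \overline{B_1}$, and Paley--Wiener plus analytic continuation. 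All steps check out. The difference between your route and the cited one is quantitative versus qualitative: Nazarov and Jaming supply an explicit constant of the form $C(d,|B_1|,|B_2|)\lesssim e^{c\,|B_1|^{1/d}|B_2|^{1/d}}$ (or similar), whereas your compactness-by-contradiction argument only shows that \emph{some} finite $C$ exists, with no control on its size. For the paper's purposes (a nondegeneracy estimate in the existence-of-extremizers argument) only existence of $C$ is used, so your elementary proof is fully adequate, and arguably better matched to what the paper actually needs.

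One small point worth making explicit in your write-up: as stated, the theorem asserts that $C$ depends only on $(d,r_1,r_2)$, not on the centers of $B_1$ and $B_2$. Your argument, taken at face value, produces a constant attached to the two given balls. This is fine, because conjugating $P_{B_2}Q_{B_1}$ by a translation recenters $B_1$ while leaving $P_{B_2}$ unchanged (translation becomes a unimodular multiplier on the Fourier side, which commutes with $\chi_{B_2}$), and conjugating by a modulation recenters $B_2$ while leaving $Q_{B_1}$ unchanged; hence $c$ depends only on the radii and the dimension. You should state this invariance so the claimed dependence $C=C(d,r_1,r_2)$ is justified.
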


\begin{lemma}\label{uniform-est}
Let $f\in L^1(\R^d)$ satisfy $\widehat f=f$, $f(0)=\widehat f(0)=0$ and $\|f\|_{L^1}=1$. Let $B$ be a ball of radius $r$ centered at the origin, such that $\{x\in\R^d: f<0\}\subset B$. Then there exists a constant $K=K(d,r)>0$, such that
$$
\int_{B} f(x)dx \leq -K.
$$
\end{lemma}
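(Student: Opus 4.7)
The plan is to rewrite $\int_B f$ as (the negative of) the $L^1$-mass of $f$ outside $B$, and then use the Nazarov--Jaming uncertainty principle (Theorem \ref{NazUP}) to bound that mass from below uniformly in the hypotheses.

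First I would set $\Omega := \R^d \setminus B$. The hypothesis $\widehat f(0) = 0$ yields $\int f = 0$, so $\int_B f = -\int_\Omega f$, and the support condition $\{f<0\}\subseteq B$ forces $f \geq 0$ on $\Omega$, whence
\[
\int_B f(x)\,dx \;=\; -\|f\|_{L^1(\Omega)}.
\]
Thus the problem reduces to producing $K = K(d,r) > 0$ with $\|f\|_{L^1(\Omega)} \geq K$. Next I would pass from $L^1$ to $L^2$ on both sides. On one hand, the bound $\|f\|_{L^\infty} \leq \|f\|_{L^1} = 1$ (already observed in \S\ref{sec:Reductions} as a consequence of $f = \widehat f$) gives
\[
\|f\|_{L^2(\Omega)}^2 \;\leq\; \|f\|_{L^\infty}\,\|f\|_{L^1(\Omega)} \;\leq\; \|f\|_{L^1(\Omega)}.
\]
On the other, $\int f^- = 1/2$ (from $\int f = 0$ and $\|f\|_{L^1} = 1$) together with the fact that $f^-$ is supported in $B$ yields, via Cauchy--Schwarz,
\[
\|f\|_{L^2}^2 \;\geq\; \int_B (f^-)^2 \;\geq\; \frac{\bigl(\int_B f^-\bigr)^2}{|B|} \;=\; \frac{1}{4|B|}.
\]

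To close the loop I would invoke Theorem \ref{NazUP} with $B_1 = B_2 = B$; combined with $\widehat f = f$ this produces $\|f\|_{L^2}^2 \leq 2C\,\|f\|_{L^2(\Omega)}^2$ for some $C = C(d,r,r)$. Chaining the three displayed inequalities gives $\|f\|_{L^1(\Omega)} \geq 1/(8C|B|)$, so the lemma holds with $K := (8C|B|)^{-1}$, a constant depending only on $d$ and $r$. I do not expect a substantive obstacle: the only deep input is the Nazarov--Jaming principle, which the authors cite ready-made, and every other step is elementary. The essential point is that the hypotheses simultaneously supply the ceiling $\|f\|_{L^\infty}\leq 1$ and a definite amount of negative $L^1$-mass trapped inside $B$; these sandwich $\|f\|_{L^2}^2$ between $1/(4|B|)$ from below and (a constant times) $\|f\|_{L^1(\Omega)}$ from above, which is exactly what the uncertainty principle needs in order to force $\|f\|_{L^1(\Omega)}$ away from zero.
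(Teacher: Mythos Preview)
Your argument is correct and is essentially identical to the paper's proof: both apply the Nazarov--Jaming inequality with $B_1=B_2=B$, use $\|f\|_{L^\infty}\le 1$ to pass from $\|f\|_{L^2(\Omega)}^2$ to $\|f\|_{L^1(\Omega)}$, and use Cauchy--Schwarz on the negative part (supported in $B$, with total mass $1/2$) to bound $\|f\|_{L^2}^2$ below by $1/(4|B|)$, arriving at the same constant $K=(8C|B|)^{-1}$.
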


\begin{proof}
Specializing Theorem \ref{NazUP} to $B_1=B_2=B$, yields
$$
\int_{\R^d} |f(x)|^2 dx \leq 2C(d,r) \int_{\R^d \setminus B} |f(x)|^2dx.
$$
Since $\widehat f=f$ and $\|f\|_{L^1}=1$, we have that $\|f\|_{L^\infty} \leq 1$. It follows that
$$
\int_{\R^d} |f(x)|^2 dx \leq 2C(d,r) \int_{\R^d \setminus B} |f(x)|dx = 2C(d,r) \int_{\R^d \setminus B} f(x)dx,
$$
where the last identity is due to $\{x\in\R^d: f(x)<0\}\subset B$. We also have that
$$
\frac 1 2 = \int_{\{f<0\}}|f(x)|dx \leq |B|^{1/2} \|f\|_{L^2},
$$
which in turn implies 
$$
2C(d,r) \int_{\R^d \setminus B} f(x)dx \geq \frac{1}{4|B|}.
$$
Since $\int_{\R^d}f=0$, we finally conclude 
$$
\int_{B}f(x)dx \leq -\frac{1}{8C(d,r)\omega_d r^d}.$$
\end{proof}

\begin{proof}[Proof of Theorem \ref{existence-thm}]
Let $\{f_n\}\subset \mathcal{A}$ be an extremizing sequence for inequality \eqref{sharp}. In particular, $A(\widehat f_n)A(f_n)\to {\bf A}^2$, as $n\to\infty$. By the reductions of \S \ref{sec:Reductions}, we may assume that each function $f_n$ is radial, and satisfies $\widehat f_n=f_n$, and $f_n(0)=\widehat f_n(0)=0$, and $\|f_n\|_{L^1}=1$. Extracting a subsequence if necessary, we may further assume that $\{A(f_n)\}$ is a strictly decreasing sequence, otherwise there is nothing to prove. It follows that $A(f_n)\searrow {\bf A}$. These reductions imply 
$$
\|f_n\|_{L^2}\leq 1, \textrm{ for every } n.
$$
By the Banach-Alaoglu Theorem, we may assume (again extracting a subsequence if necessary) that the sequence $\{f_n\}$ converges to some function $f\in L^2$ in the weak topology of $L^2(\R^d)$. In other words, for every function $\varphi \in L^2$, 
$$
\lim_{n\to\infty} \int_{\R^d} f_n(x)\varphi(x)dx = \int_{\R^d} f(x)\varphi(x)dx.
$$
Clearly, $\widehat f=f$. Applying Lemma \ref{uniform-est} together with the fact that the sequence $\{A(f_n)\}$ is decreasing, we see that 
$$
\int_{B(0,{A(f_1)})}f(x)dx = \lim_{n\to\infty} \int_{B(0,A(f_1))} f_n(x)dx \leq -K(d,A(f_1)).
$$
Hence $f$ is nonzero. Further note that, if $S$ is a compact set such that $S\subset \R^d\setminus \overline{B(0,{\bf A})}$, then $f_n(x)\geq 0$ for every $x\in S$, if $n$ is sufficiently large. It follows that  
$$
\int_{S} f(x)dx \geq 0,
$$
 and so $f(x)\geq 0$, for almost every $x\in\R^d\setminus \overline{B(0,{\bf A})}$. Consequently, $A(f)\leq {\bf A}$.

We claim that $f\in \mathcal{A}$, and that $f$ is an extremizer for inequality \eqref{sharp}.  Mazur's Lemma implies the existence of a sequence $\{g_n\}\subset L^2$, such that each function
$g_n$ belongs to the convex hull of $\{f_n,....,f_{N_n}\}$, for some $N_n\geq n$, and
$$
\lim_{n\to\infty} \|g_n -f\|_{L^2} = 0.
$$
Again extracting a subsequence of $\{g_n\}$ if necessary, we may assume that $g_n(x)\to f(x)$, for almost every $x\in\R^d$. Since the sequence $\{A(f_n)\}$ is decreasing, we have that $A(g_n) \leq A(f_n) \leq A(f_1)$. An application of Fatou's Lemma yields
$$
\int_{|x|\geq A(f_1)} f(x)dx = \int_{|x|\geq A(f_1)} \lim_{n\to\infty} g_n(x)dx \leq \liminf_{n\to\infty} \int_{|x|\geq A(f_1)} g_n(x)dx \leq \liminf_{n\to\infty} \|g_n\|_{L^1} \leq 1.
$$
This implies  $f\in L^1(\R^d)$. 
Now, for each $n$,  the inequalities $|f_n(x)| \leq \|f_n\|_{L^1} \leq 1$ hold, for almost every $x\in\R^d$. It follows that, for every $n$, $\|g_n\|_{L^\infty} \leq 1$,  which in turn implies  $\|f\|_{L^\infty} \leq 1$. Define the functions 

$$h_n:=g_n+\chi_{B(0,A(f_1))},\;\;\;(n\in\N).$$
These are nonnegative functions that converge pointwise almost everywhere to $f+\chi_{B(0,A(f_1))}$. Since $\widehat f_n(0)=0$, for every $n$, we have that $\widehat h_n(0)=\omega_d A(f_1)^d$, for every $n$. An application of Fatou's Lemma to the sequence $\{h_n\}$ implies $\widehat f(0) \leq 0$. We conclude that $f\in\mathcal{A}$, which in particular implies  $A(f)\geq {\bf A}$, hence $A(f)={\bf A}$, and $f$ is an extremizer. Finally, if $f(0)=\widehat f(0)<0$, then the function $g=f-f(0)e^{-\pi|\cdot|^2}$ would contradict the fact that $f$ is an extremizer. We deduce that $f(0)=0$. The proof of the theorem is now complete.
\end{proof}

\subsection{Infinitely many double roots}
As discussed in the Introduction, we split the proof of Theorem \ref{ext} in two parts. The first one works in the case $d=1$ only, and involves the sequence of functions $\{\varphi_n\}$ which was defined in \eqref{def_varphi} and studied in \S \ref{sec:flows}.
The principle at work is easy to describe: If $f$ has a finite number of double roots, then we identify an explicit function $h$ such that the function $f_\varepsilon:=f + \varepsilon h$ satisfies all the desired properties if $\varepsilon>0$
is sufficiently small, and $A(f_\varepsilon) < A(f)$ for {some small but positive} $\varepsilon $. This is illustrated in Figure \ref{fig:perturbation} below. \\

\begin{proof}[Proof of Theorem \ref{ext} for $d=1$]
Start by noting that any function $f \in \mathcal{A}$ is uniformly continuous because $\widehat{f}$ is integrable. By the same token, $\widehat{f}$ is also uniformly continuous.
Aiming at a contradiction, let $f\in\mathcal{A}$ be an extremizer of inequality \eqref{sharp} with only a finite number of double roots. Applying the dilation symmetry allows us to assume that $A(f) = A(\widehat{f})$ without changing the number of double roots. The new function $f$ has now only finitely many double roots on $({\bf A}, \infty)$. Since $A(f) = A(\widehat{f}) = {\bf A}$, we see that the continuous function $g:=f + \widehat{f}\in\mathcal{A}$ has only finitely many double roots in the interval $({\bf A}, \infty)$ (and at most as many as $f$). Moreover, it satisfies $A(g)={\bf A}$, i.e., the function $g$ is itself an extremizer.
Using Lemma \ref{herm} with $a_1 = {\bf A}$ and $a_2, a_3, \dots, a_{k}$ equal to the positive double roots of 
$g$ in $({\bf A},\infty)$, we can ensure the existence of (infinitely many, and therefore one) $n\in\N$ such that the function $\varphi_n$ satisfies
$$\varphi_n({\bf A})>0\textrm{ and }\varphi_n(a_j)>0\textrm{ for every } 2\leq j \leq k.$$
 By continuity, the function $\varphi_n$ is  positive in an open neighborhood of ${\bf A}$ and of all the double roots of $g$. Since it tends to $0$ 
as $|x| \rightarrow \infty$, it is  bounded
from below by some constant (depending on $n$), and by construction it is also positive outside a compact interval.
Therefore, if $\varepsilon>0$ is chosen sufficiently small,  then the function $g_\varepsilon:=g + \varepsilon \varphi_n$ equals its Fourier transform, belongs to the set $\mathcal{A}$, and is strictly positive on $[{\bf A},\infty)$. By continuity of $g_\varepsilon$, there exists $\delta>0$ such that the function $g_\varepsilon$ has no roots on the half-line $[{\bf A}-\delta, \infty)$, and in particular $A(g_\varepsilon)<{\bf A}$. This is the desired contradiction which completes the proof.
\end{proof}

\begin{center}
\begin{figure}[h!]
\centering
\includegraphics[width=0.5\textwidth]{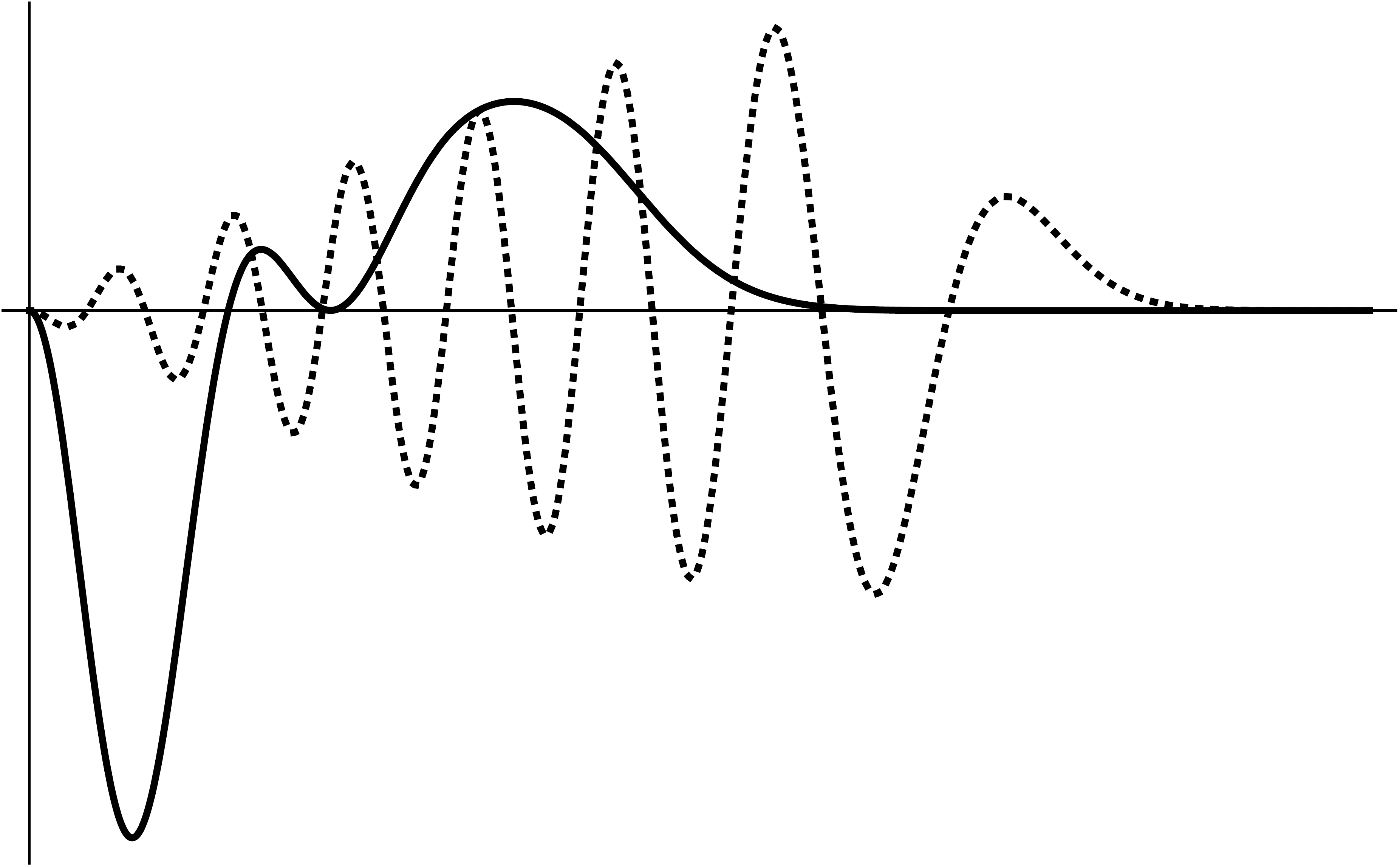}
\captionsetup{width=0.95\textwidth}
\caption{An example close to the extremizer candidate from Figure \ref{fig:goodcandidate}, having a root at $\sim 0.6$ and a unique double root at $\sim 0.9$. Adding a tiny multiple of $\varphi_6$ (dashed) moves the
root closer to the origin and resolves the double root without introducing additional roots. }
\label{fig:perturbation}
\end{figure}
\end{center}

The previous argument can be partially adapted to the higher dimensional setting, at the expense of making the construction less explicit.

\begin{proof}[Proof of Theorem \ref{ext} for $d\geq 2$]
For any $x\in\R^d$, let $r=|x|$ denote its Euclidean norm. Aiming at a contradiction, assume that $f(x)=f(r)$ is a radial extremizer of inequality \eqref{sharp} with only a finite number of double roots in the interval $(A(f),\infty)$. Applying the dilation symmetry, we can assume that $A(f) = A(\widehat{f})$ without changing the number of double roots. As a consequence, the function $f$ has only finitely many double roots on $({\bf A}, \infty)$. Similarly, we see that the continuous function $g:=f + \widehat{f}\in\mathcal{A}$ has only finitely many double roots in the interval $({\bf A}, \infty)$ (and at most as many as $f$). Moreover, it satisfies $A(g)={\bf A}$, i.e., the function $g$ is itself an extremizer. 
\smallskip

 Given any $T>\alpha>0$, we claim the existence of an integrable, radial function $\varphi:\R^d\to\R$ satisfying the following properties:
\begin{itemize}
\item[(a)] $\widehat{\varphi}=\varphi$,
\item[(b)] $\varphi(0)=0$,
\item[(c)] $\varphi(x)> 0$, for every $x$ such that $\alpha\leq |x|\leq T$,
\item[(d)] $\varphi(x)> 0$, if $|x|$ is sufficiently large.
\end{itemize}
The claim implies the existence of an admissible radial function $\varphi$, such that $\varphi=\widehat{\varphi}$ and $\varphi(x)>0$ for $|x|\in[{\bf A},T]$, where $T<\infty$ is such that  $g(x)>0$ for every $|x|>T$. The fact that $\varphi(x)> 0$ for sufficiently large values of $|x|$ implies that, for sufficiently small $\varepsilon>0$, the function $g+\varepsilon\varphi$ belongs to the class $\mathcal{A}$, and satisfies $A(g+\varepsilon\varphi)<A(g)$. This is the desired contradiction. 
\smallskip

In order to establish the claim, define the following auxiliary function:
$$
\psi_N(x;t) = \sum_{n=0}^{2N} t^nL_n^\nu(2\pi |x|^2)e^{-\pi|x|^2},\;\;\; (-1<t<1)
$$
where $\nu=d/2-1$. Note that $\widehat \psi_N(x;t) = \psi_N(x;-t)$, by property \eqref{Laguerre-multiplier}. Identity \eqref{laguerre-gen-func} also implies that

\begin{equation}\label{limpsi}
\lim_{N\to\infty} \{\psi_N(x;t) + \psi_N(x;-t)\} = \bigg(\frac{e^{-t2\pi|x|^2/(1-t)}}{(1-t)^{\nu+1}} + \frac{e^{t2\pi|x|^2/(1+t)}}{(1+t)^{\nu+1}}\bigg)e^{-\pi|x|^2},
\end{equation}
uniformly for $|x|$ in any compact subset of $(0,\infty)$. Note that \eqref{laguerre-at-zero} implies

$$\psi_N(0;t) + \psi_N(0;-t)>2, \text{ for every } t\in(-1,1) \text{ and } N>0.$$ 
Since the right-hand side of identity \eqref{limpsi} is a positive function, we conclude the existence of $A>0$ such that
$$
\eta(x):=\frac{\psi_A(x;1/2) + \psi_A(x;-1/2)}{\psi_A(0;1/2) + \psi_A(0;-1/2)} \geq \delta >0,
$$
for some $\delta>0$ and for every $|x|\in[\alpha/2,T+1]$.  
On the other hand, since $d\geq 2$ and therefore $\nu\geq 0$, identities \eqref{laguerre-asymp} and \eqref{laguerre-at-zero} imply that the sequence
\begin{equation}\label{eq-100}
\ell^\nu_{n}(x) = \binom {n+\nu} n^{-1} L_n^\nu(2\pi |x|^2)e^{-\pi|x|^2}
\end{equation}
converges to $0$, as $n\to\infty$, uniformly for $x$ in any compact subset of $(0,\infty)$. Finally, define
$$
\varphi = \eta - \ell^\nu_{2B} + \ell^\nu_{2C+2} - \ell^\nu_{2C},
$$
where $C>B$ are positive integers larger than $A$. Invoking \eqref{Laguerre-multiplier}, \eqref{laguerre-at-zero} and \eqref{high-power-laguerre}, respectively, one checks that the function $\varphi$  satisfies conditions (a), (b) and (d), for any choice of  $C>B>A$. However, since $\eta(x)\geq \delta$ for $|x|\in [\alpha/2,T+1]$, we can invoke \eqref{eq-100} in order to choose $C, B$ large enough that $\varphi(x)\geq \delta/2$, for every $|x|\in [\alpha,T]$. This shows that condition (c) is fulfilled as well, and finishes the verification of the claim. The theorem is now proved.
\end{proof}

\section{Proof of Theorem \ref{Dthm}}\label{sec:HigherDim}
This chapter improves the lower bound in all dimensions $d \geq 2$. The underlying insight is that the 
argument given in \cite{BCK} to prove Theorem \ref{BCK1} can be generalized to higher dimensions if one invokes classical properties of Bessel functions.

\subsection{Proof of the lower bound}
Let $f:\R^d\rightarrow\R$ be a  function satisfying assumptions  \eqref{nonzero_integrable}$-$\eqref{Radial}. Since $f=\widehat{f}$ and $f$ is radial, we have that, for any $x\in\R^d$,
$$f(x)=\int_{\R^d} f(y)\cos(2\pi x\cdot y)dy=\int_{\R^d} f(y)(\cos(2\pi x\cdot y)-1)dy,$$
where the last identity follows from the fact that $f$ has zero average. Writing $f=f^+-f^-$ as before, 
one has that
$$f^+(x)-f^-(x)=\int_{\R^d} (f^+(y)-f^-(y))(\cos(2\pi x\cdot y)-1)dy.$$
Equivalently,
$$f^-(x)-f^+(x)=\int_{\R^d} f^+(y)(1-\cos(2\pi x\cdot y))dx-\int_{\R^d} f^-(y)(1-\cos(2\pi x\cdot y))dy.$$
Notice that both of these integrals are positive, as are both of the summands in the left-hand side of this identity. By considering the cases  $f(x)\leq 0$ and $f(x)>0$ separately, it follows that
\begin{equation}\label{first_step}
f^-(x)\leq \int_{\R^d} f^+(y)(1-\cos(2\pi x\cdot y))dy.
\end{equation}
Now, if $f$ is radial, then so are $f^-, f^+$. In this case, one can express the right-hand side of inequality \eqref{first_step} in terms of Bessel functions. Switching to polar coordinates,
\begin{align*}
\int_{\R^d} f^+(y)(1-\cos(2\pi x\cdot y))dy
&=\int_0^\infty f^+(r) \Big( \int_{\mathbb{S}^{d-1}}(1-\cos(2\pi r x\cdot y))d\sigma(y)  \Big) r^{d-1}dr.
\end{align*}
Appealing to formula \eqref{innerproduct_int}, we see that the inner integral satisfies
$$\int_{\mathbb{S}^{d-1}}(1-\cos(2\pi r x\cdot y))d\sigma(y)
=\omega_{d-2} 
\int_{-1}^1 (1-\cos(2\pi r |x| t))(1-t^2)^{\frac{d-3}{2}}dt.$$ 
To compute the integral on the right-hand side of this expression, start by noting that
 $$\int_{-1}^1 (1-t^2)^{\frac{d-3}{2}}dt=\int_{-\frac\pi 2}^{\frac\pi 2} \cos^{d-2}(\theta)d\theta
 =\frac{\sqrt{\pi}\Gamma(\frac{d-1}2)}{\Gamma(\frac d 2)},$$
 as can be seen via repeated integration by parts. On the other hand, formula \eqref{alt_Bessel} implies that
 $$\int_{-1}^1 \cos(2\pi r |x| t)(1-t^2)^{\frac{d-3}{2}}dt
 =\frac{\sqrt{\pi}\Gamma(\frac {d-1} 2)}{(\pi r |x|)^{d/2-1}}J_{d/2-1}(2\pi r |x|).$$
 It follows that
 \begin{equation}\label{f-pointwise}
 f^-(x)
 \leq
 \omega_{d-2} \sqrt{\pi}\frac{\Gamma\Big(\frac{d-1}2\Big)}{\Gamma(d/2)} \int_0^\infty f^+(r) 
 \Big( 1 - \frac{\Gamma(\frac d 2)J_{d/2-1}(2\pi r |x|)}{(\pi r |x|)^{d/2-1}}  \Big) r^{d-1}dr.
\end{equation}
The dimensional constant appearing on the right-hand side of this inequality can be written as 
$$\omega_{d-1}=\omega_{d-2} \sqrt{\pi}\frac{\Gamma\Big(\frac{d-1}2\Big)}{\Gamma(\frac d 2)}.$$
Integrating inequality \eqref{f-pointwise} over the ball ${B_A}\subset\R^d$ centered at the origin of radius $A:=A(f)$, 
 \begin{equation}\label{second_step}
 \int_{{B_A}}f^-(x) dx
 \leq
\omega_{d-1} \int_0^\infty f^+(r) 
\Big[ \int_{{B_A}}\Big( 1 - \frac{\Gamma(\frac d 2)J_{d/2-1}(2\pi r |x|)}{(\pi r |x|)^{d/2-1}}  \Big)dx\Big] r^{d-1}dr.
\end{equation}
Since $f$ has zero average and $\|f\|_{L^1}=1$,

$$0=\int_{\R^d} f=\int_{\R^d} f^+-\int_{\R^d} f^-
\;\textrm{ and }\;
1=\int_{\R^d} |f|=\int_{\R^d} f^++\int_{\R^d} f^-.$$
It follows that 
\begin{equation*}
\int_{\R^d} f^+=\int_{\R^d} f^-=\frac1 2.
\end{equation*}
By definition of $A$, the support of the function $f^-$ is contained in the ball ${B_A}$. As a consequence, the left-hand side of inequality \eqref{second_step} equals
$$\int_{{B_A}}f^-=\int_{\R^d} f^-=\frac 1 2.$$
To handle the right-hand side, we use polar coordinates and change variables to compute
\begin{align*}
\int_{{B_A}}\Big( 1 - &\frac{\Gamma(\frac d 2)J_{d/2-1}(2\pi r |x|)}{(\pi r |x|)^{d/2-1}}  \Big)dx
=
\omega_{d-1} \int_0^A \Big( 1 - \frac{\Gamma(\frac d 2)J_{d/2-1}(2\pi r \rho)}{(\pi r \rho)^{d/2-1}}  \Big)  \rho^{d-1}d\rho\\
&=\omega_{d-1} \Big(\frac{A^d}{d}-\Gamma\Big(\frac d 2\Big)\frac{2^{d/2-1}}{(2\pi r)^{d}}\int_0^{2\pi r A}{J_{d/2-1}(s) s^{d/2}}  ds\Big)\\
&=\omega_{d-1} \frac{A^d}{d} \Big({1}- \frac{\Gamma\Big(\frac d 2+1\Big) J_{ d/2}(2\pi r A)}{(\pi r A)^{d/2}}\Big).
\end{align*}
The last identity is a consequence of Lemma \ref{BesselComp} with $\nu=d/2$ and $\rho=2\pi r A$.
Going back to \eqref{second_step}, we now have that
 \begin{equation*}
\frac1 2
 \leq
\omega_{d-1}^2\frac{A^d}{d}  \int_0^\infty f^+(r) 
 \Big({1}- \frac{\Gamma(\frac d 2+1) J_{ d/2}(2\pi r A)}{(\pi r A)^{d/2}}\Big) r^{d-1}dr.
\end{equation*}
Using H\"older's inequality and recalling that
$$\frac1 2=\int_{\R^d} f^+=\omega_{d-1}\int_0^\infty f^+(r) r^{d-1}dr$$
since $f^+$ is radial,
we have that 
$$1
 \leq
\omega_{d-1}\frac{A^d}{d} \sup_{t\in\R_+}\left|{1}- \frac{\Gamma(\frac d 2+1) J_{ d/2}(t)}{(t/2)^{d/2}}\right|.$$
This translates into
$$A^d\geq \frac d {\omega_{d-1}}\frac{1}{1+\lambda_d},$$
where
\begin{equation}\label{def_lambda}
\lambda_d:=-\inf_{t\in\R_+}{ \frac{\Gamma\left(\frac d 2+1\right) J_{ d/2}(t)}{(t/2)^{d/2}} }.
\end{equation}
Equivalently,
$$A\geq \frac{1}{\sqrt{\pi}}\left(\frac{1}{1+\lambda_d}\Gamma(\frac{d}{2}+1)\right)^{\frac{1}{d}},$$
which is clearly an improvement over the lower bound given in Theorem \ref{BCKhigherD} as long as $\lambda_d<1$. In the next section, we show that the sequence $\lambda=\{\lambda_d\}$ satisfies $\lambda_d<1/2$ for every $d\geq 2$, and that $\lambda_d\to0$ as $d\to\infty$ exponentially fast.

\subsection{Studying the sequence $\lambda$}
Define the auxiliary function
$$\Lambda_d(t):=\frac{J_{d/2}(t)}{(t/2)^{d/2}}.$$
The infimum in \eqref{def_lambda} is actually a minimum, and is attained by the first zero $t_0$ of the function $\Lambda'_d$.
This is a consequence of Lemma \ref{BesselExt}.
To find the first zero of the function $\Lambda'_d$, compute
$$\Lambda'_d(t)=\Big[J_{d/2}'(t) \Big(\frac t 2\Big)^{d/2}-J_{d/2}(t)\frac1 2\frac d 2 \Big(\frac t 2\Big)^{d/2-1}\Big] \Big(\frac t 2\Big)^{-d}.$$
It follows that  $t>0$ is a zero of the function $\Lambda'_d$ if and only if
$$J_{d/2}'(t) \Big(\frac {t} 2\Big)^{d/2}=J_{d/2}(t)\frac1 2\frac d 2 \Big(\frac {t} 2\Big)^{d/2-1},$$
or equivalently
$$2J_{d/2}'(t) =\frac d{t}J_{d/2}(t).$$
Recalling recursion relations \eqref{Recurrence1} and \eqref{Recurrence2}, this can be rewritten as
$$J_{d/2-1}(t)-J_{d/2+1}(t)=J_{d/2-1}(t)+J_{d/2+1}(t).$$
It follows that
$$t_0=j_{d/2+1},$$
where $j_{d/2+1}$ denotes the smallest positive zero of the Bessel function $J_{d/2+1}$ on the  real axis. We conclude that 
\begin{equation}\label{alt_exp_lambda}
\lambda_d=-\frac{2^{d/2}\Gamma\Big(\frac d 2+1\Big) J_{ d/2}(j_{d/2+1})}{(j_{d/2+1})^{d/2}}.
\end{equation}
{\it Mathematica} computes these values to any prescribed accuracy. For instance, with precision $5\times 10^{-3}$, we have that 

\begin{center}
\begin{tabular}{ l || c | c | c | c | c | c | c | c }
  $d$ & 2 & 3 & 4 & 5 & 6 & 7 & 8 & 9\\
  \hline
  $\lambda_d$ & 0.132 & 0.086 & 0.058 & 0.041 & 0.029 & 0.021 & 0.015 & 0.011
\end{tabular}
\end{center}

\vspace{4pt}
\noindent We conclude by showing that the sequence $\lambda$ tends to zero exponentially fast. For our purposes, it will suffice to additionally show that 

\begin{equation}\label{lambdad_geq10}
\lambda_d<\frac{1}{2} \textrm{ if }d\geq 10.
\end{equation}
Recall Stirling's formula  \eqref{Stirling} for the Gamma function and apply it to $\Gamma(\frac d 2+1)$.
It is an immediate consequence of our discussion in the proof of Lemma \ref{BesselExt} that
\begin{equation}\label{Bessel_zeros}
j_{d/2+1}> d/2+1
\end{equation}
for every $d\in\N$.
Formulas \eqref{alt_exp_lambda}, \eqref{Stirling} and \eqref{Bessel_zeros}, together with the basic estimate $|J_{d/2}|\leq 1$, imply 
\begin{align*}
0<
\lambda_d
&\leq \frac{2^{d/2}\sqrt{2 \pi} (d/2+1)^{(d/2+1)-1/2} e^{-(d/2+1)}e^{\mu(d/2+1)}}{(d/2+1)^{d/2}}\\
&\leq \frac{\sqrt{2 \pi}}{e}e^{\frac 1{6(d+2)}}\Big(\frac d 2+1\Big)^{1/2} \Big(\frac 2 e\Big)^{d/2}=:U_d.
\end{align*}
One can readily check that \eqref{lambdad_geq10} follows. 
Indeed, $U_{10}\leq 0.494$ and the sequence $\{U_d\}$ is monotonically decreasing.
Moreover, $U_d\to 0$ as $d\to\infty$ exponentially fast, and so does the sequence $\lambda$. This concludes the proof of Theorem \ref{Dthm}.
\qedhere\\

\end{document}